 \newtheorem{thm}{Theorem}[section]
 \newtheorem{cor}[thm]{Corollary}
 \newtheorem{lem}[thm]{Lemma}
 \newtheorem{prop}[thm]{Proposition}
 \theoremstyle{definition}
 \newtheorem{defn}[thm]{Definition}
 \theoremstyle{remark}
 \newtheorem{rem}[thm]{Remark}
 \newtheorem*{ex}{Example}
 \numberwithin{equation}{section}
\newcommand{\R}{{\mathbb R}}
\newcommand{\N}{{\mathbb N}}
\newcommand{\E}{{\mathbb E}}
\newcommand{\p}{{\mathbb P}}
\newcommand{\B}{{\mathcal B}}
\newcommand{\one}{1\!\!\!\;\mathrm{l}}
\newcommand{\eps}{\varepsilon}
\def\oo{\mathaccent23}
\def\ds{\displaystyle}
\def\eps{\varepsilon}
\numberwithin{equation}{section}
\begin{document}

\title[Dirichlet semigroups in Hilbert spaces]
 {On the Dirichlet semigroup for Ornstein -- Uhlenbeck operators in  subsets  of   Hilbert spaces}


\author[G. Da Prato]{Giuseppe Da Prato}
\address{Scuola Normale Superiore\\
Piazza dei Cavalieri, 7\\ 
56126 Pisa, Italy}
\email{g.daprato@sns.it}

\author[A. Lunardi]{Alessandra Lunardi}
\address{
Dipartimento di Matematica\\
Universit\`a di Parma\\
Viale G.P. Usberti, 53/A\\
43124 Parma, Italy}
\email{alessandra.lunardi@unipr.it}

\subjclass[2000]{Primary 35R15; Secondary 60HXX, 60H15}

\keywords{Ornstein-Uhlenbeck operators, invariant measures, Dirichlet problems}

\begin{abstract}
We consider a family  of self-adjoint Ornstein--Uhlenbeck operators ${\mathcal L}_{\alpha} $ in an infinite dimensional Hilbert space $H$ having   the same gaussian invariant measure $\mu$ for all $'\alpha \in [0,1]$.
We study the   Dirichlet problem for the equation $\lambda \varphi -  {\mathcal L}_{\alpha} \varphi = f$  in a closed set $K$, with $f\in L^2(K, \mu)$.  We first 
prove that the variational solution, trivially provided by the Lax---Milgram theorem, can be represented, as  expected, by means  of  the transition semigroup stopped to $K$.  
Then we address two problems: 1)  the regularity of the solution $\varphi$ (which is by definition in a Sobolev space $W^{1,2}_{\alpha}(K,\mu)$) of the Dirichlet problem; 2) the meaning of the Dirichlet boundary condition. Concerning regularity, we are able to prove interior  $W^{2,2}_{\alpha}$ regularity results;  concerning the boundary condition we consider both irregular and regular boundaries. In the first case we content to have a solution whose null extension outside $K$ belongs to $W^{1,2}_{\alpha}(H,\mu)$. In the second case we
 exploit the Malliavin's theory of surface integrals which is recalled in the Appendix of the paper, 
then we are able to give a meaning to the trace of $\varphi$ at $\partial K$ and to  show that it vanishes, as it is natural.
\end{abstract}

\maketitle
\section{Introduction and setting of the problem}

In this paper we present some results on second order elliptic and parabolic equations with  Dirichlet boundary conditions in a closed set  of a separable real Hilbert space 
$H$ (norm $|\cdot|$, inner product  $\langle \cdot,\cdot \rangle$).  

A  motivation  for the study of  Dirichlet  problems in proper subsets  of  $H$ is 
to provide a natural development of the potential theory in infinite dimensions started  in \cite{Gross}. 
Only a few results seem to be available in this field,   see e.g. \cite{DPZ3} and the references therein. 

  The finite dimensional theory in spaces of continuous functions is hardly extendable to the infinite dimensional setting. While 
  in finite dimensions smooth boundaries consist only of regular points in the sense of Wiener, 
  in infinite dimensions this is not true: for instance, certain hyperplanes and   the boundary of the unit ball  contain  dense subsets of irregular points for suitable Ornstein-Uhlenbeck operators (\cite{DPGZ}). This   leads to the lack of regularity results up to the boundary.  

Here we avoid a part of such  difficulties working in suitable $L^2$ spaces. 
 
To begin with, we consider  a class of Ornstein--Uhlenbeck operators  of the type
\begin{equation}
\label{OU}
{\mathcal L}_{\alpha} \varphi(x)=\frac{1}{2}\;\mbox{\rm Tr}\;[Q^{1-\alpha}D^2\varphi(x)] -\frac{1}{2}\langle x,Q^{-\alpha}D \varphi(x)  \rangle, 
\end{equation}
where $Q\in {\mathcal L}(H)$ is  a symmetric positive operator with finite trace, and $0\leq \alpha \leq 1$.

The most popular among such operators are ${\mathcal L}_{0} $ and ${\mathcal L}_{1} $:
$${\mathcal L}_{0}  \varphi(x)=\frac12\;\mbox{\rm Tr}\;[Q D^2\varphi(x)] -\frac{1}{2}\langle x, D \varphi(x)  \rangle, $$
is the operator that arises in the Malliavin calculus, 
while
$${\mathcal L}_{1} \varphi(x)=\frac12\;\mbox{\rm Tr}\;[ D^2\varphi(x)] -\frac{1}{2}\langle x,AD \varphi(x)  \rangle, $$
(with $A= Q^{-1}$) is the generator of the Ornstein-Uhlenbeck semigroup with the best smoothing properties. See e.g. \cite{DPZ3}.

The operators ${\mathcal L}_{\alpha} $ exhibit an important common feature:  the associated Ornstein-Uhlenbeck semigroups $T_{\alpha}(t) $ in $C_b(H)$ have the same invariant measure $\mu={\mathcal N}_{Q}$, the Gaussian measure  of  mean  $0$ and covariance $Q $. 
In this paper  we shall consider realizations of the operators ${\mathcal L}_{\alpha} $ in the space $L^2(K , \mu)$, where $K$ is a closed set   in $H$ with non empty interior part $\oo{K}$.

A unique weak solution to the Dirichlet problem
\begin{equation}
\label{e1.6a}
\left\{\begin{array}{l}
\lambda \varphi(x)-{\mathcal L}_{\alpha} \varphi(x)=f(x), \quad\mbox{\rm in}\;  K,
\\
\\
\varphi(x)=0,\quad\mbox{\rm on}\; \partial K 
\end{array}\right.
\end{equation}
with $\lambda >0$ and $f\in L^2(K,\mu)$ is easily obtained via the Lax-Milgram Theorem, applied in a Hilbert space $\oo{W}^{1,2}_{\alpha} (K ,\mu)$ ``naturally" associated to ${\mathcal L}_{\alpha} $ (see next section). 
This allows to define a dissipative self-adjoint operator $M_{\alpha}$ in $L^2(K,\mu)$ such that $\varphi = R(\lambda, M_{\alpha})f$. As all dissipative self-adjoint operators in Hilbert spaces, $M_{\alpha}$ is the infinitesimal generator of an analytic contraction semigroup.

We give an explicit expression of  the semigroup generated by $M_{\alpha}$. Precisely, we identify it with the natural extension  to $L^2(K,\mu)$ of the so-called {\em stopped semigroup}  $T^{K}_{\alpha}(t)$.  In analogy with  the finite dimensional case (e.g.,  \cite{Friedman}), it  is defined in $B_b(K)$  (the space of the bounded and Borel measurable functions defined in $K$) by  
\begin{equation}
\label{e1.8}
\begin{array}{lll}
T^{K}_{\alpha}(t) \varphi(x)&=&\ds\E[\varphi(X_{\alpha}(t,x))\one_{\tau_x \ge t}]\\
\\
&=&\ds\int_{\{\tau_x \ge t\}}\varphi(X_{\alpha}(t,x))d\p,\quad\forall\;x\in K,
\end{array}
\end{equation}
where $\tau_x $ is the entrance  time in the complement of $K$, 
\begin{equation}
\label{e1.7}
\tau_x :=\inf\{t\ge 0:\;X_{\alpha}(t,x)\in K^c\},\quad\forall\;x\in K,
\end{equation}
and   $X_{\alpha}(t,x)$ is the solution to 
\begin{equation}
\label{e1.6}
dX_{\alpha}(t,x)= -\frac{1}{2}A^{\alpha}X_{\alpha}(t,x)dt+A^{(\alpha-1)/2}dW(t),\quad X(0,x)=x.
\end{equation}
Here $W(t)$ is a standard cylindrical Wiener process in $H$, defined in a filtered probability space $(\Omega,  \mathcal F, ({ \mathcal F}_t)_{t\geq 0}, \p)$.

The definition of   $T^{K}_{\alpha}(t)$ is similar to the one in \cite{Tal}, where the exit time from $ \oo{K}$,  $\widetilde{\tau}_x :=\inf\{t\ge 0:\;X_{\alpha}(t,x)\in  \oo{K} ^c\}$ was used instead of our $\tau_x$. 
In finite dimensions, if $K$ is the closure of a  bounded open set  with smooth boundary  the two definitions are equivalent, and $T^{K}_{\alpha}(t)$ is the semigroup associated to the realization of ${\mathcal L}_{\alpha}$ with Dirichlet boundary condition  (\cite[\S6.5]{Friedman}). Therefore,  a lot of regularity results, both interior and up to the boundary, are well known. 
In infinite dimensions,  interior regularity results were given in \cite{Tal}  for $\alpha >0$. We do not know regularity results up to the boundary, even in the case of very smooth bounded sets such as balls. 

Here we prove that    $\mu$  is a sub-invariant  measure for $T^{K}_{\alpha}(t)$. 
 Therefore, $T^{K}_{\alpha}(t)$ has a natural extension (still called $T^{K}_{\alpha}(t)$) to a contraction semigroup in $L^2(K,\mu)$.  The domain of its generator $L^{K}_{\alpha}$ consists of the range of the resolvent operator, 
\begin{equation}
\label{e1.10a}
R(\lambda, L^{K}_{\alpha})f =  \int_0^\infty e^{-\lambda t}T^{K}_{\alpha}(t) f dt,\quad f\in L^2(K,\mu),
\end{equation}
which is well defined for $\lambda >0$ since $T^{K}_{\alpha}(t)$ is a contraction semigroup. We prove that for each $\lambda >0$ and $f\in L^2(K,\mu)$, the function $\varphi := R(\lambda, L^{K}_{\alpha})f$ belongs to the above mentioned space $\oo{W}^{1,2}_{\alpha} (K,\mu)$, and satisfies the weak formulation of \eqref{e1.6a}. Therefore, $L^{K}_{\alpha} = M_{\alpha}$.

Our main  tool in the proof is  the approximating Feynman--Kac semigroup
\begin{equation}
\label{e1.11}
P^{\eps}_{\alpha}(t) \varphi(x)=\E\left[\varphi(X_{\alpha}(t,x))e^{-\frac1\eps\;\int_0^t  V(X_{\alpha}(s,x))ds}\right], 
\end{equation}
where $V$ is a (fixed) bounded continuous function that vanishes in $K$ and has positive values in  $K^c$. 
Its infinitesimal generator in $L^2(H,\mu)$ is the operator $M^{\eps}_{\alpha} : D(M^{\eps}_{\alpha}) = D(L_{\alpha})$ $\mapsto 
L^2(H, \mu)$, $M^{\eps }_{\alpha} \varphi = L_{\alpha}\varphi_\eps - \frac1\eps\,V\varphi$, and we  prove that for each $\varphi \in L^2(K,\mu)$, $t>0$, $\lambda >0$ we have
$$T^{K}_{\alpha}(t)\varphi = \lim_{\eps \to 0} (P^{\eps}_{\alpha}(t) \widetilde{\varphi})_{|K}, \quad R(\lambda, L^{K}_{\alpha})\varphi =  \lim_{\eps \to 0} (R(\lambda, M^{\eps}_{\alpha})\widetilde{\varphi})_{|K}$$
in $L^2(K,\mu)$, where $\widetilde{\varphi}$ is the null extension of $\varphi$ to the whole $H$.

Problem \eqref{e1.6a} is of interest for $\lambda =0$ too.  Using the fact that $D(L^{K}_{\alpha})$ is compactly embedded in $L^2(K,\mu)$, in Sect. 3.3 we prove that $0\in \rho(L^{K}_{\alpha})$ and that a Poincar\'e estimate holds in $\oo{W}^{1,2}_{\alpha}(K, \mu)$, for $\alpha\in (0, 1]$. Therefore, the supremum of $\sigma (L^{K}_{\alpha})$ is negative. 

These results are proved without additional assumptions on $K$. In particular, we do not require that $K$ is bounded, or that its boundary  is smooth. 

If the boundary of $K$ is suitably smooth,  it is possible to define surface integrals and traces at the boundary of functions in the Sobolev spaces $W^{1,2}_{\alpha}(K, \mu)$. 
Then we prove that the traces of the functions in $\oo{W}^{1,2}_{\alpha} (K,\mu)$ vanish. Therefore, the Dirichlet boundary condition in \eqref{e1.6a} is satisfied in the sense of the trace, and $T^{K}_{\alpha}(t)\varphi $ has null trace at the boundary for every $t>0$ and $\varphi\in L^2(K,\mu)$. 

Surface integrals for gaussian measures in Hilbert spaces are not a straightforward extension of the finite dimensional theory. To our knowledge the best reference is \cite[\S 6.10]{Bo}, where the Malliavin theory is presented. It deals with  
level surfaces of   smooth functions $g$ in a more general context than ours, since Souslin spaces $X$ are considered instead of Hilbert spaces. A part of the theory may be  simplified in our Hilbert setting, and moreover some of the smoothness assumptions on $g$ can be weakened. 
Therefore, we end the paper with an appendix  describing surface measures for level surfaces of suitably regular functions $g:H\mapsto \R$.

Several related important problems remain open, even for bounded $K$ with smooth boundary. Among them:
\begin{itemize}
\item[(a)]  While in finite dimensions 
$\varphi = R(\lambda, L^{K}_{\alpha})f$ is a strong solution to \eqref{e1.6a} 
and it belongs to $W^{2,2}(K,\mu)$ under reasonable assumptions on the boundary $\partial K$  (\cite{LMP}), 
in infinite dimensions we do not know whether $\varphi$ possesses second order derivatives in $L^2(K,\mu)$, even  if $K$ is the closed unit ball.  In fact, even in the case $\alpha =1$, the estimates found in \cite{DPGZ,Tal} 
are very bad  both near the boundary  and near $t=0$, and it is not clear how to use them  to get informations on the resolvent.
\item[(b)] We do not know whether $T^{K}_{\alpha}(t)$ is strong Feller in $K$ (i.e., it maps $B_b(K)$, the space of the bounded Borel functions  in $K$, to $C_b(K)$). This problem is open even for $K = \{ x\in H:\; |x| \leq 1\}$. 
\item[(c)] In finite dimensions, if  $\partial K$ is regular enough there are several characterizations of the space $\oo{W}^{1,2}_{\alpha} (K,\mu)$,  that coincides with $\oo{W}^{1,2}_{1}(K,\mu)$ for every $\alpha\in [0,1]$. The most obvious is the following: since $\mu$ is locally equivalent to the Lebesgue measure, $\oo{W}^{1,2}_{1} (K,\mu)$ coincides with the space  of the functions $f\in W^{1,2}_{1}(K,\mu)$ whose trace at the boundary vanishes. We do not know whether a  similar characterization holds in infinite dimensions. 
\end{itemize}

Referring to problem (a), in the  recent paper \cite{BDPT} a self-adjoint realization $L $ of $\mathcal L_1$ in $L^2(K,\mu)$ with Neumann boundary condition has been studied, in the case that $K$ is a convex   set with regular boundary. By means of a different (and better) approximation procedure, it has been proved that the resolvent $R(\lambda, L )$ maps  $L^2(K,\mu)$ into $W^{2,2}_{1}(K,\mu)$. 

Here we prove interior $W^{2,2}_{\alpha}$ regularity, for those $\alpha$ such that   Tr$[Q^{1-\alpha}]<\infty$. In this case  we show that 
  for every ball $B\subset K$ with positive distance from $\partial K$ and for every $\varphi \in D(L^{K}_{\alpha})$, the restriction $\varphi_{|B}$ belongs to $W^{2,2}_{\alpha}(B, \mu)$.


\section{Notation and preliminaries}


We denote by $\langle \cdot, \cdot\rangle $ and by $|\cdot|$ the scalar product and the norm in $H$. ${\mathcal L}(H)$ is the space of the linear bounded operators in $H$.

Let $Q$ be a symmetric (strictly) positive operator in ${\mathcal L}(H)$ with finite trace, and let $A:=Q^{-1}$. 
Accordingly, let  $\{e_k\}$ be an orthonormal basis in $H$ consisting of eigenfunctions of $Q$, i.e.
$$Qe_k=\lambda_k e_k,\;\;Ae_k = \frac{1}{\lambda_k}e_k, \quad \forall\;k\in \N .$$
We denote by $D_k$ the derivative in the direction of  $e_k$ and by $D$ the gradient of any differentiable function. Moreover we
 set $x_k=\langle x,e_k   \rangle$   for all $x\in H,\; k\in \N$. 

Throughout the paper we consider the $\sigma$-algebra $\mathcal{B}(H)$ of the Borel subsets of $H$ and the Gaussian measure with center $0$ and covariance $Q$  in $\mathcal{B}(H)$, denoting it by $\mu$.

An orthonormal basis of $L^2(H, \mu)$ consists of the Hermite polynomials. More precisely,  
for each $n\in \N \cup\{0\}$    let 
$$H_n(\xi) := (-1)^n n!^{-1/2} e^{\xi^2/2} D^n(e^{-\xi^2/2}), \quad \xi \in \R , $$
 be the usual normalized $n$-th Hermite polynomial. We denote by $\Gamma $ the set of all 
$\gamma : \N \mapsto \N \cup \{0\}$ such that  $\sum_{k=1}^\infty \gamma(k) <\infty$. For each $\gamma\in \Gamma$ let 
$$H_\gamma (x) := \prod_{k=1}^{\infty} H_{\gamma(k)}\bigg( \frac{x_k}{\sqrt{\lambda_k}} \bigg), \quad x\in H,$$ 
be the corresponding Hermite polynomial in $H$. Then, the  linear span ${\mathcal H}$ of all the Hermite polynomials $H_{\gamma}$ is dense in $L^2(H, \mu)$, and the linear span $\Lambda_0$ of the functions $H_{\gamma} \otimes e_h$, with $\gamma \in \Gamma$ and $h\in \N$,  is dense in the space $L^2(H,\mu; H)$ of all the (equivalence classes of) measurable functions $F:H\mapsto H$ such that $\int_{H}|F(x)|^2 \mu (dx)<\infty$. 

Other important dense subspaces of  $L^2(H, \mu)$ are the spaces  ${\mathcal E}_{\alpha}(H)$,  the linear spans  of the real and imaginary parts of the   functions $x\mapsto e^{i\langle x,  h \rangle} $, with $h\in D(A^{\alpha})$, $0\leq \alpha \leq 1$.

\subsection{Sobolev spaces over $H$}

We have the following integration   formula, 
\begin{equation}
 \int_{H} D_k\varphi \, d\mu =  \frac{1}{\lambda_k}\int_H x_k\varphi \,d\mu , \quad \varphi \in {\mathcal E}_{\alpha}(H) , \;  k\in \N . 
 \label{intpartiE}
 \end{equation}
It   may be extended to 
\begin{equation}
\int_{H} \langle D\varphi, G\rangle d\mu + \int_{H} \varphi \, \mbox{\rm div}\,G \,d\mu = \int_{H}\varphi  \langle x, AG(x)\rangle d\mu, \quad \varphi \in C^1_b(H), \; G\in \Lambda_0, 
 \label{intpartiG}
 \end{equation}
where div$\,G (x)=   \sum_{k=1}^{\infty}\langle DG(x), e_k\rangle$.  The  linear  operator $Q^{(1-\alpha)/2}D$ is well defined from ${\mathcal E}_{\alpha}(H) \subset L^2(H, \mu)$ to $ L^2(H,\mu; H)$, by 
 $$ Q^{(1-\alpha)/2}D\varphi = \sum_{k=1}^{\infty} \lambda_{k}^{(1-\alpha)/2}D_k\varphi  \,e_k.$$
Using   formula \eqref{intpartiG} it is easy to see that $ Q^{(1-\alpha)/2}D$ is closable. We still denote by $ Q^{(1-\alpha)/2}D$ its closure, and by $W^{1,2}_{\alpha}(H, \mu)$ the domain of the closure. (Note that for $\alpha=0$, $ Q^{1/2}D$ is nothing but the Malliavin derivative). $W^{1,2}_{\alpha}(H, \mu)$ is endowed with  the inner product  
\begin{equation}
\label{prodscal1,2}
\begin{array}{lll}
\langle \varphi,\psi  \rangle_{W_{\alpha}^{1,2}(H,\mu)}&=&\ds\int_H\varphi\psi \,d\mu
+\int_H\langle Q^{(1-\alpha)/2}D\varphi,Q^{(1-\alpha)/2}D\varphi  \rangle d\mu\\
\\
&=&\ds \int_H\varphi\psi \,d\mu
+\sum_{k=1}^\infty\int_H\lambda^{1-\alpha }_k D_k\varphi D_k\psi \, d\mu.
\end{array}
 \end{equation}
 So, $W^{1,2}_{\alpha}(H, \mu)$ is the completion of ${\mathcal E}_{\alpha}(H)$ in the norm associated to the scalar product \eqref{prodscal1,2}. It is also possible to characterize it through the Hermite polynomials. We have $\varphi\in W_{ \alpha}^{1,2}(H,\mu)$ iff
$$\sum_{\gamma\in \Gamma}  \sum_{h=1}^{\infty} \gamma_h \lambda_h^{-\alpha}\varphi_{\gamma}^2 <\infty$$
in which case the above sum is equal to $\int_H |Q^{(1-\alpha)/2}D\varphi|^2d\mu$. Indeed, the proof in \cite[Sect. 9.2.3]{DPZ3} for $\alpha =1$ works as well for any $\alpha \in [0,1)$.  
 
From this characterization it is clear that $W^{1,2}_{\alpha}(H, \mu) \subset  W^{1,2}_{0}(H, \mu)$ for every $\alpha \in (0, 1]$, with continuous embedding.

 Similarly, $W^{2,2}_{\alpha}(H, \mu)$  is the completion of ${\mathcal E}_{\alpha}(H)$ in the norm associated to the scalar product 
$$
\begin{array}{lll}
\langle \varphi,\psi  \rangle_{W_{ \alpha}^{2,2}(H,\mu)}&=&\ds\langle \varphi,\psi  \rangle_{W_{ \alpha}^{1,2}(H,\mu)}+\int_H \mbox{\rm Tr}\;[Q^{2-2\alpha}D^2\varphi D^2\psi] d\mu\\
\\
&=&\ds \langle \varphi,\psi  \rangle_{W_{ \alpha}^{1,2}(H,\mu)}+ \sum_{h,k=1}^\infty\int_H\lambda^{1-\alpha}_h\lambda^{1-\alpha}_k D_{h,k}\varphi D_{h,k}\psi \, d\mu.
\end{array}
$$

Next lemma is a consequence of   \cite[Lemma~5.1.12]{Bo} or \cite[Lemma~9.2.7]{DPZ3}.

 \begin{lem}
\label{emb}
There is $C>0$ such that 
$$\int_H |x|^2 \varphi(x)^2d\mu \leq C\|\varphi\|^2_{W^{1,2}_{0}(H, \mu)} , \quad \varphi \in  W^{1,2}_{0}(H, \mu).$$
 \end{lem}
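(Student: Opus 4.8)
The plan is to reduce the estimate to a statement about Hermite polynomial expansions and then invoke the cited finite-dimensional / one-dimensional computation. First I would recall that multiplication by the coordinate function $x_k$ acts in a tridiagonal way on the Hermite basis: writing $\varphi = \sum_{\gamma\in\Gamma}\varphi_\gamma H_\gamma$, the classical three-term recurrence for the normalized Hermite polynomials $H_n$ gives $\xi H_n(\xi) = \sqrt{n+1}\,H_{n+1}(\xi) + \sqrt{n}\,H_{n-1}(\xi)$, hence $\frac{x_k}{\sqrt{\lambda_k}}\,H_\gamma = \sqrt{\gamma(k)+1}\,H_{\gamma + \delta_k} + \sqrt{\gamma(k)}\,H_{\gamma - \delta_k}$, where $\delta_k$ is the increment by one unit in the $k$-th slot. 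Therefore $x_k\varphi$ has an explicit expansion and, by orthonormality of $\{H_\gamma\}$ in $L^2(H,\mu)$,
\begin{equation}
\label{eq:xkexp}
\int_H x_k^2\,\varphi^2\,d\mu = \lambda_k \sum_{\gamma\in\Gamma}\big(\gamma(k)+1\big)\varphi_\gamma^2 + \lambda_k\sum_{\gamma\in\Gamma}\gamma(k)\,\varphi_{\gamma+\delta_k}^2 ,
\end{equation}
because the cross terms between $H_{\gamma+\delta_k}$ and $H_{\gamma-\delta_k}$ vanish after integration. Reindexing the second sum and combining, $\int_H x_k^2\varphi^2\,d\mu = \lambda_k\sum_\gamma(2\gamma(k)+1)\varphi_\gamma^2$, a clean identity.

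Next I would sum over $k$. Since $|x|^2 = \sum_k x_k^2$ in $L^2(H,\mu)$ (the series converges in $L^1(H,\mu)$ because $\mathrm{Tr}\,Q = \sum_k\lambda_k<\infty$), monotone convergence gives
\begin{equation}
\label{eq:sumk}
\int_H |x|^2\varphi^2\,d\mu = \sum_{k=1}^\infty \lambda_k\sum_{\gamma\in\Gamma}(2\gamma(k)+1)\varphi_\gamma^2 = \Big(\sum_k\lambda_k\Big)\sum_\gamma\varphi_\gamma^2 + 2\sum_\gamma\Big(\sum_k\lambda_k\gamma(k)\Big)\varphi_\gamma^2 .
\end{equation}
The first term is $(\mathrm{Tr}\,Q)\,\|\varphi\|^2_{L^2(H,\mu)}$. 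For the second term I would bound $\lambda_k \le \|Q\|_{\mathcal L(H)}$ for every $k$, so $\sum_k\lambda_k\gamma(k) \le \|Q\|\sum_k\gamma(k)$; but the $W^{1,2}_0$-characterization recalled in the excerpt says $\int_H|Q^{1/2}D\varphi|^2\,d\mu = \sum_\gamma\big(\sum_h\gamma(h)\big)\varphi_\gamma^2$ (this is the $\alpha=0$ case of the displayed Hermite characterization). Hence the second term is at most $2\|Q\|\,\|Q^{1/2}D\varphi\|^2_{L^2(H,\mu;H)}$, and altogether $\int_H|x|^2\varphi^2\,d\mu \le \big(\mathrm{Tr}\,Q + 2\|Q\|\big)\|\varphi\|^2_{W^{1,2}_0(H,\mu)}$, giving the lemma with $C = \mathrm{Tr}\,Q + 2\|Q\|$.

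For the passage from $\mathcal E_\alpha(H)$ (or from Hermite polynomials, where everything is a finite sum and all manipulations are trivially legitimate) to general $\varphi\in W^{1,2}_0(H,\mu)$, I would use a density/Fatou argument: take $\varphi_n\to\varphi$ in $W^{1,2}_0(H,\mu)$ with $\varphi_n$ polynomials, pass to a subsequence converging $\mu$-a.e., and apply Fatou's lemma to $|x|^2\varphi_n^2$ on the left while the right-hand side converges. This is the routine part. The one genuine subtlety — the main obstacle — is justifying that the cross terms in \eqref{eq:xkexp} truly drop out and, more importantly, that the double series manipulations in \eqref{eq:sumk} are valid: all the terms are nonnegative, so Tonelli applies and there is no conditional-convergence issue, but one must be slightly careful that $|x|^2\varphi^2$ is genuinely integrable a priori, or else interpret the bound as showing finiteness as part of the conclusion. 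Since the excerpt grants us the cited \cite[Lemma~5.1.12]{Bo} / \cite[Lemma~9.2.7]{DPZ3}, an alternative shorter route is simply to cite that result for the inequality and only verify that the constant is uniform; but the self-contained Hermite computation above is, I expect, what makes the statement transparent.
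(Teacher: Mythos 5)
The paper gives no proof of this lemma at all --- it is quoted as a consequence of \cite[Lemma~5.1.12]{Bo} or \cite[Lemma~9.2.7]{DPZ3} --- so your self-contained Hermite computation is a genuinely different and in principle more informative route. It does, however, contain one concrete error, precisely at the point you flagged as the ``main obstacle'': the cross terms do \emph{not} vanish. From $x_kH_\gamma=\sqrt{\lambda_k}\bigl(\sqrt{\gamma(k)+1}\,H_{\gamma+\delta_k}+\sqrt{\gamma(k)}\,H_{\gamma-\delta_k}\bigr)$ one sees that $x_kH_\gamma$ and $x_kH_{\gamma'}$ fail to be orthogonal not only for $\gamma'=\gamma$ but also for $\gamma'=\gamma\pm 2\delta_k$, since then $H_{\gamma+\delta_k}=H_{\gamma'-\delta_k}$. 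Hence the correct identity is
$$\int_H x_k^2\varphi^2\,d\mu=\lambda_k\sum_{\gamma}(2\gamma(k)+1)\varphi_\gamma^2+2\lambda_k\sum_{\gamma}\sqrt{(\gamma(k)+1)(\gamma(k)+2)}\;\varphi_\gamma\varphi_{\gamma+2\delta_k},$$
and the second sum is generically nonzero. A one-dimensional check: with $\lambda_k=1$ and $\varphi=H_0+H_2$ one has $\xi\varphi=(1+\sqrt2)H_1+\sqrt3\,H_3$, so $\int_{\R}\xi^2\varphi^2\,d\mu=6+2\sqrt2$, whereas your ``clean identity'' predicts $6$.

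The error is harmless for the inequality you actually need, because it disappears once the exact identity is replaced by the elementary bound $(a+b)^2\le 2(a^2+b^2)$: writing
$$\int_H x_k^2\varphi^2\,d\mu=\lambda_k\sum_{\beta}\Bigl(\sqrt{\beta(k)}\,\varphi_{\beta-\delta_k}+\sqrt{\beta(k)+1}\,\varphi_{\beta+\delta_k}\Bigr)^2\le 2\lambda_k\sum_{\gamma}(2\gamma(k)+1)\varphi_\gamma^2$$
(with the convention $\varphi_{\beta-\delta_k}:=0$ when $\beta(k)=0$), the rest of your argument --- Tonelli over $k$, the bound $\lambda_k\le\|Q\|_{{\mathcal L}(H)}$, and the identification $\sum_{\gamma}\bigl(\sum_{h}\gamma(h)\bigr)\varphi_\gamma^2=\int_H|Q^{1/2}D\varphi|^2d\mu$ from the $\alpha=0$ Hermite characterization --- goes through verbatim and yields the lemma with $C=2\,\mathrm{Tr}\,Q+4\|Q\|_{{\mathcal L}(H)}$. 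The closing density/Fatou passage to general $\varphi\in W^{1,2}_0(H,\mu)$ is fine. In short: right strategy, and more transparent than the paper's bare citation, but the claimed orthogonality of $H_{\gamma+\delta_k}$ and $H_{\gamma'-\delta_k}$ is false and must be replaced by the quadratic estimate above (at the cost of a factor $2$ in the constant).
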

 
Lemma \ref{emb}, together with \eqref{intpartiE}, yields the integration by parts formula in $W^{1,2}_{0}(H, \mu)$ (and hence, in all spaces $W^{1,2}_{\alpha}(H, \mu)$), 
\begin{equation}
 \int_{H} D_k\varphi \, \psi\, d\mu =  - \int_{H} \varphi \, D_k\psi\, d\mu  + \frac{1}{\lambda_k}\int_H x_k\varphi \,\psi \, d\mu , \quad \varphi, \;\psi  \in W^{1,2}_{0}(H, \mu), \;k\in \N . 
 \label{intparti}
 \end{equation}

For $0\leq \alpha \leq 1$ let $ T_{\alpha}(t)$ be the Ornstein-Uhlenbeck semigroup
\begin{equation}
\label{OUalpha}
 T_{\alpha}(t)\varphi(x) := \int_H \varphi(y){ \mathcal N}_{e^{-tA^{\alpha}/2} x, Q_t}(dy), \quad t>0, 
 \end{equation}
with 
$$Q_t := \int_0^t e^{-sA^{\alpha} }Q^{1-\alpha} ds = Q(I- e^{-tA^{\alpha} }).$$
$T_{\alpha}(t)$ is a Markov semigroup in $C_b(H)$, whose unique invariant measure is $\mu$. Its extension to $L^2(H, \mu)$ is a strongly continuous contraction semigroup, still denoted by 
$T_{\alpha}(t)$, whose  infinitesimal generator $L_{\alpha}$   is the closure of
${\mathcal L}_{\alpha} :{\mathcal E}_{\alpha}(H) \mapsto L^2(H, \mu)$. 

The domain of  $L_{\alpha}$ is continuously embedded in $W_{ \alpha}^{2,2}(H,\mu)$. 
Moreover,  for any $\varphi$, $\psi\in D(L_{\alpha})$ we have 
\begin{equation}
\label{e1.6aa}
\int_{H}L_{\alpha}\varphi\;\psi\,d\mu =-\frac12\;
\int_{H}\langle Q^{(1-\alpha)/2}D\varphi, Q^{(1-\alpha)/2}D\psi\rangle  d\mu.
\end{equation}

We refer to \cite[Ch.~9,~10]{DPZ3} for the proofs of   the above statements, and we add further properties of the spaces  $W_{ \alpha}^{1,2}(H,\mu)$ that will be used later. For each $\varphi\in L^1(H, \mu)$ we denote by 
$ \overline{\varphi}$ the mean value of $\varphi$, 
$$ \overline{\varphi} := \int_H \varphi\,d\mu .$$

\begin{prop}
\label{proprieta'}
Let  $0\leq \alpha \leq 1$. Then
\begin{itemize}
\item[(a)] A Poincar\'e estimate holds in $W_{ \alpha}^{1,2}(H,\mu)$, and precisely
\begin{equation}
\int_H (\varphi - \overline{\varphi})^2 d\mu \leq  \lambda_1^{\alpha} \int_H  |Q^{(1-\alpha)/2}D\varphi|^2 d\mu, 
\label{Poincare}
 \end{equation}
 where $\lambda_1$ is the maximum eigenvalue of $Q$.
 \item[(b)] The space  $W_{ \alpha}^{1,2}(H,\mu)$ is compactly embedded in $L^2(H, \mu)$ for $\alpha >0$.
 \end{itemize}
\end{prop}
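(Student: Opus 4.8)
The plan is to reduce everything to the Hermite polynomial expansion, exploiting the spectral characterization of $W^{1,2}_\alpha(H,\mu)$ recalled just above. For a function $\varphi\in W^{1,2}_\alpha(H,\mu)$ write $\varphi=\sum_{\gamma\in\Gamma}\varphi_\gamma H_\gamma$; then $\overline{\varphi}=\varphi_{0}$ (the coefficient of the constant polynomial, $\gamma\equiv 0$), so $\int_H(\varphi-\overline{\varphi})^2\,d\mu=\sum_{\gamma\neq 0}\varphi_\gamma^2$, while the characterization gives $\int_H|Q^{(1-\alpha)/2}D\varphi|^2\,d\mu=\sum_{\gamma\in\Gamma}\big(\sum_{h\ge 1}\gamma_h\lambda_h^{-\alpha}\big)\varphi_\gamma^2$. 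For part (a) it then suffices to observe that for every $\gamma\neq 0$ one has $\sum_{h\ge 1}\gamma_h\lambda_h^{-\alpha}\ge \lambda_1^{-\alpha}\sum_{h\ge 1}\gamma_h\ge\lambda_1^{-\alpha}$, since $\lambda_h\le\lambda_1$ for all $h$, hence $\lambda_h^{-\alpha}\ge\lambda_1^{-\alpha}$, and $\sum_h\gamma_h\ge 1$ whenever $\gamma\neq 0$. Multiplying term by term and summing yields
\begin{equation*}
\sum_{\gamma\neq 0}\varphi_\gamma^2\le \lambda_1^{\alpha}\sum_{\gamma\in\Gamma}\Big(\sum_{h\ge 1}\gamma_h\lambda_h^{-\alpha}\Big)\varphi_\gamma^2,
\end{equation*}
which is exactly \eqref{Poincare}. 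One should first verify this on the dense subspace $\mathcal E_\alpha(H)$ (or directly on finite linear combinations of Hermite polynomials), where all sums are finite, and then pass to the limit using that both sides are continuous for the $W^{1,2}_\alpha$-norm.

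For part (b), the natural route is again spectral. Fix $\alpha>0$ and consider a bounded sequence $(\varphi^{(n)})$ in $W^{1,2}_\alpha(H,\mu)$; I want to extract an $L^2(H,\mu)$-convergent subsequence. Enumerate $\Gamma$ as $\gamma^{(1)},\gamma^{(2)},\dots$; by a diagonal argument one gets a subsequence (still denoted $\varphi^{(n)}$) along which each coordinate $\varphi^{(n)}_{\gamma^{(j)}}$ converges. The point is that this coordinatewise limit is in fact an $L^2$-limit, because the tail of the Hermite expansion is uniformly small: for any threshold $N$,
\begin{equation*}
\sum_{\sum_h\gamma_h> N}\varphi_\gamma^2\le \lambda_1^{\alpha} N^{-1}\sum_{\gamma\in\Gamma}\Big(\sum_h\gamma_h\lambda_h^{-\alpha}\Big)\varphi_\gamma^2\le C N^{-1}\sup_n\|\varphi^{(n)}\|^2_{W^{1,2}_\alpha(H,\mu)},
\end{equation*}
using $\sum_h\gamma_h\lambda_h^{-\alpha}\ge\lambda_1^{-\alpha}\sum_h\gamma_h> \lambda_1^{-\alpha}N$ on that range. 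Since for each fixed $N$ there are only finitely many $\gamma$ with $\sum_h\gamma_h\le N$ (recall $\gamma$ takes values in $\N\cup\{0\}$ with finitely many nonzero entries, but more to the point one uses that only finitely many multi-indices have bounded total order — this is where a small combinatorial remark is needed, see below), the finite-dimensional part converges, and combined with the uniform tail estimate one concludes $\varphi^{(n)}\to\varphi$ in $L^2(H,\mu)$.

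The step I expect to need the most care is the finiteness claim used in (b): the set $\{\gamma\in\Gamma:\sum_h\gamma_h\le N\}$ is \emph{not} finite, since a single unit of "mass" can sit in any coordinate $e_h$. So the crude truncation by total order does not by itself give compactness, and one must use $\alpha>0$ more seriously — the weights $\lambda_h^{-\alpha}\to\infty$ as $h\to\infty$ (because $\lambda_h\to 0$, $Q$ having finite trace). Thus $\sum_h\gamma_h\lambda_h^{-\alpha}$ is large not only when $\sum_h\gamma_h$ is large but also when the mass sits in a high coordinate; the correct truncation is by the weighted quantity $\sum_h\gamma_h\lambda_h^{-\alpha}\le R$, and the set of such $\gamma$ \emph{is} finite precisely because $\lambda_h^{-\alpha}\to\infty$. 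This is exactly the obstruction that fails at $\alpha=0$ (all weights equal $1$), consistent with the statement. Alternatively, one can invoke \cite[Lemma~9.2.7]{DPZ3} or the general criterion that $D(L_\alpha)\hookrightarrow W^{2,2}_\alpha\hookrightarrow\hookrightarrow L^2$, but the self-contained spectral argument above is cleaner; either way, carefully isolating why $\alpha>0$ is essential is the crux.
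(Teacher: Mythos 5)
Your proof is correct, and it is worth separating the two parts. For (b) you end up with exactly the paper's argument: the paper extracts a weakly convergent subsequence, splits the Hermite expansion over $\Gamma_N=\{\gamma:\sum_h\gamma_h\lambda_h^{-\alpha}\le N\}$ and its complement, bounds the tail by $(2K)^2/N$, and uses that $\Gamma_N$ is finite because $\lambda_h^{-\alpha}\to\infty$ — precisely the point you isolate as the crux, and the reason $\alpha>0$ is needed. Your initial truncation by the unweighted total order $\sum_h\gamma_h\le N$ would indeed fail (that set is infinite), but you catch and repair this yourself, so there is no gap in the final version. For (a), however, you take a genuinely different route: the paper follows Deuschel--Stroock via the semigroup, i.e.\ it verifies the pointwise gradient commutation estimate $|Q^{(1-\alpha)/2}DT_{\alpha}(t)\varphi|^2\le e^{-t/\lambda_1^{\alpha}}T_{\alpha}(t)\bigl(|Q^{(1-\alpha)/2}D\varphi|^2\bigr)$, the identity \eqref{e1.6aa}, and the convergence $T_{\alpha}(t)\varphi\to\overline{\varphi}$, and then runs the proof of \cite[Prop.~10.5.2]{DPZ3}. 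Your spectral argument — $\overline{\varphi}=\varphi_0$, $\int_H(\varphi-\overline{\varphi})^2d\mu=\sum_{\gamma\neq 0}\varphi_\gamma^2$, and $\sum_h\gamma_h\lambda_h^{-\alpha}\ge\lambda_1^{-\alpha}$ for $\gamma\neq 0$ — is shorter, uses only the Hermite characterization of $W^{1,2}_{\alpha}(H,\mu)$ that the paper already invokes for part (b), and makes the constant $\lambda_1^{\alpha}$ transparent (it is attained in the limit on $\varphi=H_1(x_1/\sqrt{\lambda_1})$). What the semigroup route buys instead is independence from the explicit diagonalization of $Q$, which is why it is the standard method in settings where no such orthonormal eigenbasis expansion is available; here, where the characterization is already on record, your argument is a legitimate and arguably cleaner alternative. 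One small remark: the limiting step you flag at the end of (a) is not really needed, since the identity $\int_H|Q^{(1-\alpha)/2}D\varphi|^2d\mu=\sum_{\gamma}\sum_h\gamma_h\lambda_h^{-\alpha}\varphi_\gamma^2$ is stated for all of $W^{1,2}_{\alpha}(H,\mu)$, so the term-by-term comparison applies directly.
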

\begin{proof}
A proof of statement (a) that follows the approach of Deuschel and Strook \cite{DS} is in \cite[Ch.10]{DPZ3}
  for $\alpha =1$. The same procedure works for $\alpha\in [0,1)$, since the key points of the proof still hold. Precisely, we have
\begin{itemize}
\item[(i)]
$|Q^{(1-\alpha)/2}DT^{\alpha}(t)\varphi|^2 \leq e^{-t/\lambda_1^{\alpha}}T^{\alpha}(t)(|Q^{(1-\alpha)/2}D\varphi|^2), \; \varphi\in C^1_b(H), \;t>0;$
\item [(ii)]
$\displaystyle \int_H \varphi L_{\alpha}\varphi \,d\mu  = -\frac{1}{2} \int_H |Q^{(1-\alpha)/2}D \varphi|^2d\mu , \quad \varphi \in D( L_{\alpha});$
\item[(iii)]
$\lim_{t\to \infty} T^{\alpha}(t)\varphi (x) = \overline{\varphi}, \quad \varphi\in {\mathcal E}_{\alpha}(H), \; x\in H.$
\end{itemize}
Once (i), (ii), (iii) are satisfied one can follow the proof of \cite[Prop.~10.5.2]{DPZ3} step by step. 
(ii) and (iii) follow from \cite[Prop.~10.2.3, Prop.~10.1.1]{DPZ3}. To check that (i) holds is easy and it is left to the reader. 
 
Statement (b) should be well known, however we give here a simple proof following \cite[Thm.~10.16]{DP} that concerns the case $\alpha =1$. We write every element $\varphi $ of $L^2(H, \mu)$ as $\varphi = \sum_{\gamma \in \Gamma} \varphi_{\gamma}H_{\gamma}$, with $\varphi_{\gamma} = \langle \varphi, H_{\gamma}\rangle$. We already  remarked  that $\varphi\in W_{ \alpha}^{1,2}(H,\mu)$ iff
$$\sum_{\gamma\in \Gamma}  \sum_{h=1}^{\infty} \gamma_h \lambda_h^{-\alpha}\varphi_{\gamma}^2 <\infty .$$
If a sequence $(\varphi ^{(n)})$ is bounded in $W_{ \alpha}^{1,2}(H,\mu)$, say $\|\varphi ^{(n)}\|_{W_{ \alpha}^{1,2}(H,\mu)}\leq K$ for each $n\in \N$, a subsequence $(\varphi ^{(n_k)})$ converges weakly in $  W^{1,2}_{ \alpha}(H,\mu)$ to a limit  $\varphi  $, that still satisfies 
$\|\varphi  \|_{W_{ \alpha}^{1,2}(H,\mu)}\leq K$. We shall show  that $\lim_{k\to \infty } \| \varphi ^{(n_k)} -\varphi\|_{L^2(H, \mu)} =0$. 

For each $N\in \N$, let $\Gamma_N =\{ \gamma \in \Gamma: \; \sum_{h=1}^{\infty} \gamma_h \lambda_h^{-\alpha} <\infty\}$. Then 
$$\begin{array}{l}
\displaystyle{ \int_H (\varphi ^{(n_k)} -\varphi)^2d\mu = \sum_{\gamma\in \Gamma_N} ( \varphi ^{(n_k)}_{\gamma} - \varphi_{\gamma})^2 
+ \sum_{\gamma\in \Gamma_N^c} ( \varphi ^{(n_k)}_{\gamma} - \varphi_{\gamma})^2 }
\\
\\
\leq \displaystyle{  \sum_{\gamma\in \Gamma_N} ( \varphi ^{(n_k)}_{\gamma} - \varphi_{\gamma})^2 + \frac{1}{N}\sum_{\gamma\in \Gamma} \sum_{h=1}^{\infty} \gamma_h \lambda_h^{-\alpha}( \varphi ^{(n_k)}_{\gamma} - \varphi_{\gamma})^2 }
\\
\\
\leq \displaystyle{  \sum_{\gamma\in \Gamma_N} ( \varphi ^{(n_k)}_{\gamma} - \varphi_{\gamma})^2  + \frac{(2K)^2}{N}.}
\end{array}$$
For $\eps >0$ fix $N\in \N$ such that $4K^2/N\leq \eps$. Since $\alpha >0$, then $\lim_{h\to \infty} \lambda_h^{-\alpha}  = +\infty$, so that the set $\Gamma_N$ has a finite number of elements. Since $ \varphi ^{(n_k)}$ converges weakly to $\varphi $ in $W_{ \alpha}^{1,2}(H,\mu)$, it converges weakly to $\varphi $ in $L^2(H, \mu)$; in particular $\lim_{h\to \infty}  \varphi ^{(n_k)}_{\gamma} = \varphi_{\gamma}  $ for each $\gamma \in \Gamma_N$. Therefore, for $k$ large enough we have $ \sum_{\gamma\in \Gamma_N} ( \varphi ^{(n_k)}_{\gamma} - \varphi_{\gamma})^2 \leq \eps$, and the statement follows.  \end{proof}

\subsection{Sobolev spaces over $K$}

Throughout the paper we assume that $K\subset H$ is a closed set with positive measure. To avoid trivialities, we assume that also  $K^c$ has positive measure.  

To treat  the Dirichlet problem \eqref{e1.6a} we introduce Sobolev spaces over $K$. 
We denote by $W^{1,2}_{\alpha} (K,\mu)$ the space of the functions $u: K  \mapsto \R$ that have an extension belonging to $W^{1,2}_{\alpha} (H,\mu)$, endowed with the standard inf norm. 
Moreover we  denote by $\oo{W}^{1,2}_{\alpha} (K,\mu)$ the subspace of $W^{1,2}_{\alpha} (K,\mu)$ consisting of the functions $u: K  \mapsto \R$ whose null extension to the whole $H$ belongs to the Sobolev space $W^{1,2}_{\alpha}(H,\mu)$. Therefore, 
$$\|u\|^{2}_{W^{1,2}_{\alpha} (K,\mu)} = \int_K u^2d\mu +  \int_K |Q^{(1-\alpha)/2}Du|^2d\mu, \quad u\in \oo{W}^{1,2}_{\alpha} (K,\mu),$$
 so that the 
$W^{1,2}_{\alpha} (K,\mu)$-norm in $\oo{W}^{1,2}_{\alpha} (K,\mu)$ is associated to the inner product
\begin{equation}
\label{prod.scal}
\langle u,v\rangle_{W^{1,2}_{\alpha}(K,\mu)} = \int_{ K} u\,v\,d\mu + \int_{ K} \langle  Q^{(1-\alpha)/2}Du,  Q^{(1-\alpha)/2}Dv \rangle \,d\mu .
\end{equation}
From the results of the next section it will be clear that such a space is not trivial, since it coincides with the domain 
of $(I-L^{K}_{\alpha})^{1/2}$.  Moreover, since $W^{1,2}_{\alpha}(H,\mu)$ is continuously embedded in $W^{1,2}_{0}(H,\mu)$, then 
$\oo{W}^{1,2}_{\alpha} (K,\mu)$ is continuously embedded in $\oo{W}^{1,2}_{0} (K,\mu)$, for every $\alpha \in (0, 1]$.

\subsection{The weak solution to \eqref{e1.6a}}
\label{sect:Diri}

The quadratic form $\mathcal {\mathcal Q}_{\alpha}$ associated to $\mathcal L_{\alpha}$, 
\begin{equation}
\label{Q}
{\mathcal Q}_{\alpha}(u,v) := \frac{1}{2}\int_{ K} \langle Q^{(1-\alpha)/2}Du, Q^{(1-\alpha)/2}Dv \rangle \,d\mu , \quad u,v\in  \oo{W}^{1,2}_{\alpha} (K,\mu),
\end{equation}
is continuous, nonnegative, and symmetric. Therefore, for every $\lambda >0$ and $f\in L^2( K,\mu)$ there exists a unique $\varphi \in \oo{W}^{1,2}_{\alpha} (K,\mu)$ such that 
\begin{equation}
\label{LM}
\lambda \int_{ K} \varphi \,v\,d\mu + \frac{1}{2}\int_{ K} \langle Q^{(1-\alpha)/2}D\varphi , Q^{(1-\alpha)/2}Dv \rangle \,d\mu = \int_{ K} f\,v\,d\mu, \quad \forall v\in \oo{W}^{1,2}_{\alpha} (K,\mu).
\end{equation}
The function $\varphi$ may be considered a weak solution to  \eqref{e1.6a}. 
Moreover, there exists a dissipative self-adjoint operator $M_{\alpha}$ in $L^2( K,\mu)$ such that $\varphi = R(\lambda, M_{\alpha})f$. Like all dissipative self-adjoint operators in Hilbert spaces, $M_{\alpha}$ is the infinitesimal generator of an analytic contraction semigroup, and several properties of $M_{\alpha}$  follow. See e.g. \cite[Ch.~6]{Kato}.


 \section{The Dirichlet semigroup}
 

In this section we give an explicit representation formula for the semigroup generated by  the operator $M_{\alpha}$ defined in  section \ref{sect:Diri}, through the approximation procedure described in the introduction. Moreover we show some properties of the semigroup and of its generator. 

\subsection{The approximating semigroups}

We fix once and for all a   function $V\in C_b(H)$ such that 
 $$V(x)=0, \;x\in K, \quad V(x)>0, \;x\in K^c . $$
For   $\eps >0$ let $P^{\eps}_{\alpha}(t)$ be defined by \eqref{e1.11}.

\begin{prop}
\label{p2.2}
For any  $\varphi\in C_b(H)$ we have
\begin{equation}
\label{e2.4}
\int_H(P^{\eps}_{\alpha}(t)\varphi(x))^2\mu(dx)\le
\int_H\varphi^2(x)\mu(dx).
\end{equation}
Consequently, $P^{\eps}_{\alpha}(t)$ is uniquely extendable to a $C_0$-semigroup in $L^2(H,\mu)$ which  we shall denote by the same symbol.
\end{prop}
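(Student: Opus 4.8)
The plan is to establish the $L^2(H,\mu)$-contraction estimate \eqref{e2.4} directly from the probabilistic representation \eqref{e1.11}, using the fact that $\mu$ is the invariant measure for $T_\alpha(t)$ and that the Feynman--Kac weight $e^{-\frac1\eps\int_0^t V(X_\alpha(s,x))\,ds}$ is bounded by $1$ (since $V\ge 0$). First I would fix $\varphi\in C_b(H)$ and $t>0$. Writing $e_\eps(t,x):=\exp\bigl(-\tfrac1\eps\int_0^t V(X_\alpha(s,x))\,ds\bigr)$, so that $P^\eps_\alpha(t)\varphi(x)=\E[\varphi(X_\alpha(t,x))\,e_\eps(t,x)]$ with $0\le e_\eps(t,x)\le 1$, I would apply the Cauchy--Schwarz (or Jensen) inequality with respect to the probability measure $\p$: since $e_\eps\le 1$,
\begin{equation}
\label{eq:CSstep}
\bigl(P^\eps_\alpha(t)\varphi(x)\bigr)^2 = \bigl(\E[\varphi(X_\alpha(t,x))\,e_\eps(t,x)]\bigr)^2 \le \E\bigl[\varphi^2(X_\alpha(t,x))\,e_\eps(t,x)^2\bigr] \le \E\bigl[\varphi^2(X_\alpha(t,x))\bigr] = T_\alpha(t)(\varphi^2)(x).
\end{equation}
Then I would integrate over $H$ against $\mu$ and use the invariance of $\mu$ under $T_\alpha(t)$, i.e.\ $\int_H T_\alpha(t)(\varphi^2)\,d\mu = \int_H \varphi^2\,d\mu$, to conclude $\int_H (P^\eps_\alpha(t)\varphi)^2\,d\mu \le \int_H \varphi^2\,d\mu$, which is exactly \eqref{e2.4}.

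For the second assertion, I would argue as follows. The operators $P^\eps_\alpha(t)$ form a semigroup on $C_b(H)$ by the Markov property of $X_\alpha(\cdot,x)$ together with the additivity of the exponent under the cocycle relation $\int_0^{t+s}V(X_\alpha(r,x))\,dr = \int_0^t V(X_\alpha(r,x))\,dr + \int_0^s V(X_\alpha(r,\,\cdot\,))\,dr$ evaluated along the shifted path; this is the standard Feynman--Kac semigroup identity and I would state it without reproving it in detail. Since $C_b(H)$ is dense in $L^2(H,\mu)$ and \eqref{e2.4} shows each $P^\eps_\alpha(t)$ is a contraction on $(C_b(H),\|\cdot\|_{L^2(H,\mu)})$, each $P^\eps_\alpha(t)$ extends uniquely to a linear contraction on $L^2(H,\mu)$, and the semigroup law passes to the closure. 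Strong continuity on $L^2(H,\mu)$ I would obtain by comparison with $T_\alpha(t)$: for $\varphi\in C_b(H)$ one has, pointwise, $|P^\eps_\alpha(t)\varphi(x)-T_\alpha(t)\varphi(x)| \le \|\varphi\|_\infty\,\E\bigl[1-e_\eps(t,x)\bigr] \le \tfrac{1}{\eps}\|\varphi\|_\infty\,\E\bigl[\int_0^t V(X_\alpha(s,x))\,ds\bigr] \to 0$ as $t\to 0^+$ uniformly in $x$ (since $V$ is bounded), while $T_\alpha(t)$ is strongly continuous on $L^2(H,\mu)$; combined with the uniform bound $\|P^\eps_\alpha(t)\|_{\mathcal L(L^2)}\le 1$ and density of $C_b(H)$, this gives $P^\eps_\alpha(t)\varphi\to\varphi$ in $L^2(H,\mu)$ for all $\varphi\in L^2(H,\mu)$.

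I expect the only genuinely delicate point to be the justification of the semigroup property and the measurability/continuity in $x$ of $P^\eps_\alpha(t)\varphi$ in the infinite-dimensional setting — i.e.\ that $X_\alpha(t,x)$ depends measurably on $x$ and that the Markov property applies — but these are standard facts about the Ornstein--Uhlenbeck process \eqref{e1.6} in $H$ (see \cite{DPZ3}) and can be invoked rather than proved. The contraction estimate itself, which is the substance of the statement, is a one-line consequence of $0\le e_\eps\le 1$, Cauchy--Schwarz, and the invariance of $\mu$.
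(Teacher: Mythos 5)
Your argument is correct and is essentially the paper's own proof: the key estimate $(P^{\eps}_{\alpha}(t)\varphi(x))^2\le \E[\varphi^2(X_{\alpha}(t,x))e_\eps(t,x)^2]\le T_{\alpha}(t)(\varphi^2)(x)$ via Cauchy--Schwarz and $0\le e_\eps\le 1$, followed by integration against the invariant measure $\mu$, is exactly the H\"older-inequality step in the paper. Your additional remarks on the semigroup law and strong continuity of the extension are sound (the paper leaves these as standard), so nothing further is needed.
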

\begin{proof} We have in fact, by the H\"older inequality
$$
(P^{\eps}_{\alpha}(t)\varphi(x))^2\le\E  \left(\varphi^2(X_{\alpha}(t,x))e^{-\frac2\eps\;\int_0^tV(X_{\alpha}(s,x))ds}\right)\le  T_{\alpha}(t)(\varphi^2)(x),
$$
where $ T_{\alpha}(t)$ is the Ornstein-Uhlenbeck semigroup defined in \eqref{OUalpha}.
Since $\mu$ is invariant for $T_{\alpha}(t)$, then
\begin{align}
 \int_H (P^{\eps}_{\alpha}(t)\varphi(x))^2\mu(dx)\le \int_H T_{\alpha}(t)(\varphi^2)(x)\mu(dx)= \int_H\varphi^2(x)\mu(dx).  
\end{align}
 \end{proof}

We denote  by $M^{\eps}_{\alpha}$ the infinitesimal  generator of  $P^{\eps}_{\alpha}(t)$  in $L^2(H,\mu)$ and we want to show
 that $M^{\eps}_{\alpha}=L_{\alpha} -\frac1\eps \,V .$ To this aim, for $\lambda>0$ and $f\in L^2(H, \mu)$ we consider the resolvent equation
\begin{equation}
\label{e2.3}
\lambda \varphi_\eps-L_{\alpha}\varphi_\eps+\frac1\eps\;V\varphi_\eps = f.
\end{equation}

  \begin{prop}
\label{p2.4}
Let $\lambda>0$, $\eps >0$, and $f\in L^2(H,\mu)$. Then  equation \eqref{e2.3} has a unique solution   $\varphi_\eps \in D(L_{\alpha})$, and the following estimates hold.
\begin{equation}
\label{e2.7}
\int_H\varphi^2_\eps d\mu\le \frac1{\lambda^2}\;\int_Hf^2 d\mu,
\end{equation}
\begin{equation}
\label{e2.8}
\int_H|Q^{(1-\alpha)/2}D\varphi _\eps|^2 d\mu\le \frac2{\lambda}\;\int_Hf^2 d\mu,
\end{equation}
\begin{equation}
\label{e2.9}
\int_{K^c}  V \varphi^2_\eps  d\mu\le \frac\eps{\lambda }\;\int_Hf^2 d\mu.
\end{equation}
\end{prop}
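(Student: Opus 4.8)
The plan is to treat equation \eqref{e2.3} by the Lax--Milgram theorem applied to the bilinear form
$$
a_\eps(\varphi,\psi) := \lambda\int_H \varphi\,\psi\,d\mu + \tfrac12\int_H\langle Q^{(1-\alpha)/2}D\varphi, Q^{(1-\alpha)/2}D\psi\rangle\,d\mu + \tfrac1\eps\int_H V\varphi\,\psi\,d\mu
$$
on the Hilbert space $W^{1,2}_\alpha(H,\mu)$ (note $V\ge 0$ and bounded, so the form is continuous and coercive, with coercivity constant $\min\{\lambda,1\}$ up to the usual rescaling). This gives a unique $\varphi_\eps\in W^{1,2}_\alpha(H,\mu)$ with $a_\eps(\varphi_\eps,\psi)=\int_H f\psi\,d\mu$ for all $\psi\in W^{1,2}_\alpha(H,\mu)$. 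To upgrade $\varphi_\eps$ to an element of $D(L_\alpha)$ solving \eqref{e2.3} in the strong sense, I would note that $V\varphi_\eps\in L^2(H,\mu)$ since $V$ is bounded, so $\varphi_\eps$ weakly solves $\lambda\varphi_\eps - L_\alpha\varphi_\eps = f - \tfrac1\eps V\varphi_\eps\in L^2(H,\mu)$; since $\varphi_\eps = R(\lambda,L_\alpha)(f-\tfrac1\eps V\varphi_\eps)$ by the variational characterization of $L_\alpha$ via \eqref{e1.6aa}, we get $\varphi_\eps\in D(L_\alpha)$ and \eqref{e2.3} holds as an identity in $L^2(H,\mu)$. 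Uniqueness is immediate: if $\varphi_\eps$ solves the homogeneous equation, testing against $\varphi_\eps$ itself and using $\int_H L_\alpha\varphi_\eps\,\varphi_\eps\,d\mu\le 0$ and $\int V\varphi_\eps^2\ge 0$ forces $\lambda\int\varphi_\eps^2\le 0$, hence $\varphi_\eps=0$.

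The three estimates all come from the single energy identity obtained by multiplying \eqref{e2.3} by $\varphi_\eps$ and integrating, using \eqref{e1.6aa}:
$$
\lambda\int_H\varphi_\eps^2\,d\mu + \tfrac12\int_H|Q^{(1-\alpha)/2}D\varphi_\eps|^2\,d\mu + \tfrac1\eps\int_{K^c} V\varphi_\eps^2\,d\mu = \int_H f\varphi_\eps\,d\mu,
$$
where the last integral on the left is over $K^c$ only because $V$ vanishes on $K$. For \eqref{e2.7}, drop the two nonnegative gradient and potential terms to get $\lambda\int\varphi_\eps^2\le \int f\varphi_\eps \le \|f\|_{L^2}\|\varphi_\eps\|_{L^2}$, hence $\|\varphi_\eps\|_{L^2}\le \lambda^{-1}\|f\|_{L^2}$, which is \eqref{e2.7}. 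Substituting this bound back, the right-hand side $\int f\varphi_\eps\le \|f\|_{L^2}\|\varphi_\eps\|_{L^2}\le \lambda^{-1}\|f\|_{L^2}^2$; then dropping the $\lambda\int\varphi_\eps^2$ and $\tfrac1\eps\int V\varphi_\eps^2$ terms gives $\tfrac12\int|Q^{(1-\alpha)/2}D\varphi_\eps|^2\le \lambda^{-1}\|f\|_{L^2}^2$, i.e. \eqref{e2.8}; and dropping the other two nonnegative terms gives $\tfrac1\eps\int_{K^c}V\varphi_\eps^2\le \lambda^{-1}\|f\|_{L^2}^2$, i.e. \eqref{e2.9}.

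The only genuinely delicate point is justifying the energy identity itself, since $\varphi_\eps\in D(L_\alpha)$ is only known a priori to lie in $W^{2,2}_\alpha(H,\mu)$, and identity \eqref{e1.6aa} is stated for $\varphi,\psi\in D(L_\alpha)$; here one takes $\psi=\varphi_\eps$, which is admissible, so $\int_H L_\alpha\varphi_\eps\,\varphi_\eps\,d\mu = -\tfrac12\int_H|Q^{(1-\alpha)/2}D\varphi_\eps|^2\,d\mu$ directly. Thus no approximation argument is actually needed beyond what the excerpt already provides, and the pairing $\int_H(\tfrac1\eps V\varphi_\eps)\varphi_\eps\,d\mu$ is finite because $V$ is bounded. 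I expect the main obstacle to be purely bookkeeping: making sure the variational solution genuinely belongs to $D(L_\alpha)$ rather than just to $W^{1,2}_\alpha(H,\mu)$, which I handle above by the closed-graph/resolvent argument; everything else is a one-line manipulation of the energy identity.
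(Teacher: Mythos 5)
Your proof is correct, and the heart of it --- the energy identity obtained by multiplying \eqref{e2.3} by $\varphi_\eps$, integrating, invoking \eqref{e1.6aa}, and then discarding nonnegative terms to extract \eqref{e2.7}, \eqref{e2.8}, \eqref{e2.9} in turn --- is exactly the paper's argument. The only place you diverge is the existence and uniqueness step: the paper observes that $\varphi\mapsto \frac1\eps V\varphi$ is a bounded, monotone perturbation of the maximal dissipative operator $L_\alpha$, so that $L_\alpha-\frac1\eps V$ with domain $D(L_\alpha)$ is again maximal dissipative and $\lambda-L_\alpha+\frac1\eps V$ is invertible with $\|R(\lambda,M^\eps_\alpha)\|\le 1/\lambda$ (which already gives \eqref{e2.7} for free), whereas you go through Lax--Milgram on $W^{1,2}_\alpha(H,\mu)$ and then bootstrap the weak solution into $D(L_\alpha)$ via $\varphi_\eps=R(\lambda,L_\alpha)(f-\frac1\eps V\varphi_\eps)$. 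Your bootstrap is legitimate because $-L_\alpha$ is precisely the self-adjoint operator associated with the closed form $\frac12\int_H|Q^{(1-\alpha)/2}D\varphi|^2\,d\mu$ on $W^{1,2}_\alpha(H,\mu)$ (this identification, $W^{1,2}_\alpha(H,\mu)=D((I-L_\alpha)^{1/2})$, is implicit in the paper's construction and used again in Section 3.3), and it costs you a couple of extra lines but nothing of substance. The operator-theoretic route is marginally cleaner here since it lands directly in $D(L_\alpha)$; your variational route has the mild advantage of being self-contained given only the form. Either way, once $\varphi_\eps\in D(L_\alpha)$ is in hand, taking $\psi=\varphi_\eps$ in \eqref{e1.6aa} is admissible exactly as you say, and the three estimates follow as you write them.
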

\begin{proof} Fix $\lambda >0$ and $\eps >0$. Since $L_{\alpha}$ is maximal dissipative and  $\varphi\to\frac1\eps\, V\varphi$ is bounded and monotone increasing in $L^2(H,\mu)$, it follows by standard arguments that  the operator
$$
D(L_{\alpha})\mapsto L^2(H,\mu), \quad \varphi \mapsto L_{\alpha} \varphi -\frac1\eps\;V\varphi,
$$
is maximal dissipative. So, equation \eqref{e2.3}  has a unique solution $\varphi_\eps\in D(L_{\alpha})$, that satisfies \eqref{e2.7}.

Multiplying both sides of \eqref{e2.3} by $\varphi_\eps$, integrating over $H$  and taking into account \eqref{e1.6aa}
yields
\begin{equation}
\label{e2.10a}
\lambda\int_H \varphi_\eps^2d\mu+\frac12\;
 \int_H|Q^{(1-\alpha)/2}D\varphi_\eps|^2d\mu+\frac1\eps\;\int_{K^c} V \varphi_\eps^2d\mu =
\int_H f\varphi_\eps d\mu.
\end{equation}
The inequality $\lambda\int_H|\varphi_\eps|^2d\mu \leq \int_H f\varphi_\eps d\mu$ yields again 
 \eqref{e2.7}. The inequality
$$
\frac12\;
 \int_H|Q^{(1-\alpha)/2}D\varphi_\eps|^2d\mu\le \int_H f\varphi_\eps d\mu
$$
implies \eqref{e2.8}, using the H\"older inequality in the right-hand side and then \eqref{e2.7}. 
The inequality
$$
\frac1\eps\;\int_{K^c}V  \varphi_\eps^2d\mu \le
\int_H f\varphi_\eps d\mu  
$$
implies \eqref{e2.9}, using again the H\"older inequality in the right-hand side and then  \eqref{e2.7}.  \end{proof}

\begin{prop}
\label{p2.3}
Let  $M^{\eps}_{\alpha}$ be the infinitesimal generator of $P^{\eps}_{\alpha}(t)$. Then $D(M^{\eps}_{\alpha} ) = D(L_{\alpha})$ and
\begin{equation}
\label{e2.5}
M^{\eps}_{\alpha}\varphi=L_{\alpha}\varphi-\frac1\eps\;V\varphi,\quad\forall\;\varphi\in D(L_{\alpha}).
\end{equation}
\end{prop}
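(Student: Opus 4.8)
The plan is to identify the generator $M^{\eps}_{\alpha}$ of the Feynman--Kac semigroup $P^{\eps}_{\alpha}(t)$ with the operator $L_{\alpha}-\frac1\eps V$, whose resolvent we have just analysed in Proposition~\ref{p2.4}. The standard way to do this is a uniqueness-of-resolvent argument: since both operators generate $C_0$-semigroups of contractions on $L^2(H,\mu)$, it suffices to show that for one (equivalently all) $\lambda>0$ the resolvents agree, i.e.\ $R(\lambda, M^{\eps}_{\alpha})f = \varphi_\eps$ where $\varphi_\eps$ is the unique solution of \eqref{e2.3}. Because $R(\lambda, M^{\eps}_{\alpha})f = \int_0^\infty e^{-\lambda t}P^{\eps}_{\alpha}(t)f\,dt$ and $L_{\alpha}-\frac1\eps V$ is maximal dissipative (so $\lambda\in\rho(L_{\alpha}-\frac1\eps V)$), equality of resolvents on all of $L^2(H,\mu)$ then forces $D(M^{\eps}_{\alpha})=D(L_{\alpha}-\frac1\eps V)=D(L_{\alpha})$ and \eqref{e2.5}.

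First I would reduce to showing $P^{\eps}_{\alpha}(t)\varphi$ solves the correct equation for nice $\varphi$. The cleanest route: take $\varphi\in D(L_{\alpha})$ (or even $\varphi\in {\mathcal E}_{\alpha}(H)$ where everything is explicit) and verify directly, using the It\^o formula applied to $s\mapsto \varphi(X_{\alpha}(t-s,x))\exp(-\frac1\eps\int_0^s V(X_{\alpha}(r,x))dr)$ together with the fact that $T_{\alpha}(t)$ has generator $L_{\alpha}$, that
$$
\frac{d}{dt}P^{\eps}_{\alpha}(t)\varphi = P^{\eps}_{\alpha}(t)\Big(L_{\alpha}\varphi - \tfrac1\eps V\varphi\Big)
$$
in $L^2(H,\mu)$, i.e.\ that the Feynman--Kac perturbation produces exactly the zeroth-order perturbation $-\frac1\eps V$ of the generator. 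Equivalently, one uses the Duhamel/Dyson--Phillips representation
$$
P^{\eps}_{\alpha}(t)\varphi = T_{\alpha}(t)\varphi - \frac1\eps\int_0^t T_{\alpha}(t-s)\big(V\,P^{\eps}_{\alpha}(s)\varphi\big)\,ds,
$$
which follows by conditioning on the first ``jump'' of the multiplicative functional (or simply by expanding $e^{-\frac1\eps\int_0^t V}$ and resumming). Since $V$ is bounded, multiplication by $\frac1\eps V$ is a bounded operator on $L^2(H,\mu)$, and this integral equation characterises $P^{\eps}_{\alpha}(t)$ as the semigroup generated by the bounded perturbation $L_{\alpha}-\frac1\eps V$ of $L_{\alpha}$ — a classical bounded-perturbation theorem (e.g.\ \cite[Ch.~6]{Kato}) then gives $D(M^{\eps}_{\alpha})=D(L_{\alpha})$ and \eqref{e2.5} immediately. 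Laplace-transforming the Duhamel identity recovers the resolvent identity $R(\lambda,M^{\eps}_{\alpha})=R(\lambda,L_{\alpha})-\frac1\eps R(\lambda,L_{\alpha})\,V\,R(\lambda,M^{\eps}_{\alpha})$, whose unique solution is $R(\lambda,L_{\alpha}-\frac1\eps V)$.

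The main obstacle is the rigorous justification of the Duhamel identity (or of the It\^o-formula computation) at the level of $L^2(H,\mu)$ rather than pointwise: $X_{\alpha}(t,x)$ is an infinite-dimensional process, $\varphi\in D(L_{\alpha})$ need not be smooth or bounded, and $L_{\alpha}$ is only the $L^2$-closure of the explicit operator ${\mathcal L}_{\alpha}$ on ${\mathcal E}_{\alpha}(H)$. I would handle this by first proving the identity for $\varphi\in {\mathcal E}_{\alpha}(H)$, where $\varphi$ and $D\varphi$ are bounded and the It\^o formula for the Ornstein--Uhlenbeck process is available, then extending to $\varphi\in D(L_{\alpha})$ by density together with the contractivity estimate \eqref{e2.4} (which controls $P^{\eps}_{\alpha}(t)$ uniformly in $\eps$) and the bound $\|T_{\alpha}(t-s)(V P^{\eps}_{\alpha}(s)\varphi)\|_{L^2} \le \|V\|_\infty \|\varphi\|_{L^2}$ making the Duhamel integral a norm-convergent Bochner integral. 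Alternatively, and perhaps more in the spirit of the preceding propositions, one avoids It\^o's formula altogether: one shows that $u_\eps := R(\lambda,M^{\eps}_{\alpha})f$ and $\varphi_\eps$ (the solution of \eqref{e2.3} from Proposition~\ref{p2.4}) both satisfy the resolvent integral equation displayed above — for $\varphi_\eps$ this is just a rearrangement of $\lambda\varphi_\eps - L_{\alpha}\varphi_\eps = f - \frac1\eps V\varphi_\eps$ applied to $R(\lambda,L_{\alpha})$ — and that this equation has at most one solution in $L^2(H,\mu)$ for $\lambda$ large (by a contraction-mapping estimate, since $\|\frac1\eps R(\lambda,L_{\alpha})V\|_{{\mathcal L}(L^2)}\le \|V\|_\infty/(\eps\lambda)<1$), hence $u_\eps=\varphi_\eps$ for large $\lambda$ and then for all $\lambda>0$ by the resolvent identity. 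This gives $R(\lambda,M^{\eps}_{\alpha})=R(\lambda, L_{\alpha}-\frac1\eps V)$, and the conclusion follows.
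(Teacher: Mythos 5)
Your argument is correct, but it takes a genuinely different route from the paper. You identify $M^{\eps}_{\alpha}$ with $L_{\alpha}-\frac1\eps V$ by first establishing the Duhamel/Dyson--Phillips integral equation for $P^{\eps}_{\alpha}(t)$ (via the Markov property and the elementary identity $e^{-\frac1\eps\int_0^t V}-1=-\frac1\eps\int_0^t V(X_{\alpha}(s,x))e^{-\frac1\eps\int_s^t V}\,ds$) and then invoking the classical bounded‑perturbation theorem; your fallback is a resolvent fixed‑point argument. The paper instead works directly with the difference quotient: for $\varphi\in D(L_{\alpha})\cap C_b(H)$ it writes the one-step decomposition \eqref{numero}, uses a.s.\ continuity of paths plus dominated convergence to get $\lim_{h\to 0}(P^{\eps}_{\alpha}(h)\varphi-\varphi)/h=L_{\alpha}\varphi-\frac1\eps V\varphi$ in $L^2(H,\mu)$, extends to all of $D(L_{\alpha})$ by approximating in $D(L_{\alpha})$ with elements of ${\mathcal E}_{\alpha}(H)$ and using closedness of $M^{\eps}_{\alpha}$, and then obtains the reverse inclusion $D(M^{\eps}_{\alpha})\subset D(L_{\alpha})$ exactly as in your uniqueness-of-resolvent remark, by feeding the maximal dissipativity from Proposition~\ref{p2.4} into the injectivity of $\lambda-M^{\eps}_{\alpha}$. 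What your approach buys is that, once the Duhamel identity is in hand, the domain identification and \eqref{e2.5} come for free from standard perturbation theory, with no need to differentiate anything; the cost is the careful justification of that identity as a norm-convergent Bochner integral in $L^2(H,\mu)$ (Markov property, measurability in $s$, Fubini), which you correctly flag as the main obstacle and which is roughly comparable in effort to the paper's dominated-convergence computation at $t=0$. One small caveat: your ``alternative'' resolvent route is not fully independent of the Duhamel step, since showing that $R(\lambda,M^{\eps}_{\alpha})f$ satisfies the resolvent integral equation still requires Laplace-transforming the Duhamel identity (or some equivalent input identifying the action of $M^{\eps}_{\alpha}$ on a core), so it should be presented as a reformulation rather than an independent proof.
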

\begin{proof} Let us   show that 
$D(L_{\alpha})\subset D(M^{\eps}_{\alpha})$, and that \eqref{e2.5} holds.  

First, let $\varphi\in D(L_{\alpha})\cap C_b(H)$. For $x\in H$, $h>0$  we have
\begin{equation}
\label{numero}
P^\eps_h\varphi(x)-\varphi(x)
= T_{\alpha}(h)\varphi(x)-\varphi(x) + \E\left[\left(e^{-\frac1\eps\;\int_0^h V(X_{\alpha}(r,x))dr}-1\right)\varphi(X_{\alpha}(h,x))\right].
\end{equation}
We recall that, since $A^{\alpha}$ is self-adjoint, $X_{\alpha}(\cdot,x)$  possesses a.s.  continuous paths (\cite{K,T}).  Therefore the functions $r\mapsto \varphi(X_{\alpha}(r,x))$ and $r\mapsto V(X_{\alpha}(r,x))$ are continuous a.s.
Dividing both sides of \eqref{numero} by $h$ and letting $h\to 0$, we obtain $\lim_{h\to 0}
(P^\eps_h\varphi -\varphi )/h = L_{\alpha}\varphi - V\,\varphi/\eps$ pointwise and (by dominated convergence) in $L^2(H,\mu)$, so that $\varphi\in D(M^{\eps}_{\alpha})$ and  \eqref{e2.5} holds.

Let now $\varphi\in D(L_{\alpha})$, and  let 
 $(\varphi_n)$ be a sequence of functions in $ \mathcal E_{\alpha}(H)$ that converges to $\varphi$ in $D(L_{\alpha})$. 
Then, $\varphi_n\to \varphi$ in $L^2(H, \mu)$, so that $\frac1\eps\;V \varphi_n \to \frac1\eps\; V \varphi$ in $L^2(H, \mu)$, moreover $L_{\alpha}\varphi_n\to L_{\alpha}\varphi $  in $L^2(H, \mu)$. It follows that $M^{\eps}_{\alpha} \varphi_n\to M^{\eps}_{\alpha} \varphi$ in $L^2(H, \mu)$, and since $M^{\eps}_{\alpha}$ is closed, then $\varphi \in D(M^{\eps}_{\alpha})$ and  \eqref{e2.5} holds.\medskip

The other inclusion $D(M^{\eps}_{\alpha})\subset D(L_{\alpha})$ is immediate.  Indeed, for any  $\varphi\in D(M^{\eps}_{\alpha})$   set $f=\lambda \varphi-M^{\eps}_{\alpha} \varphi$, and let $ \varphi_\eps$ be the solution of \eqref{e2.3}. Then
$ \varphi_\eps\in D(L_{\alpha})\subset D(M^{\eps}_{\alpha})$, so that $(\lambda-M^{\eps}_{\alpha})^{-1}f=\varphi_\eps=\varphi$ which implies that $\varphi\in D(L_{\alpha})$. 
\end{proof}


\begin{rem}
From the very beginning, one would be tempted to replace the continuous function $V$ by $\one_{K^c}$ in the definition of $M_{\eps}$. But with this choice the proof  of Proposition \ref{p2.3} does not work. Indeed, it is not obvious that $(P^\eps_h\varphi -\varphi )/h$ converges as $h\to 0$ for any $\varphi \in C_b(H)\cap D(L_{\alpha})$, if $x\in \partial K$, 
because the function $r\mapsto \one_{K^c}(X_{\alpha}(r,x))$ could be discontinuous at $r=0$. If $\mu(\partial K)=0$ this difficulty is not relevant, since we are interested in $L^2$ convergence rather than in pointwise convergence. However, we prefer to make no further assumptions on $\partial K$ in this first part of the paper. 
\end{rem}

\subsection{Identification of $T^{K}_{\alpha}(t)$}

 Let $T^{K}_{\alpha}(t)$, $P^{\eps}_{\alpha}(t)$  be defined by \eqref{e1.8}, \eqref{e1.11} respectively.

 \begin{prop}
\label{p2.1}
For any $\varphi\in B_b(H)$, $t>0$, and for any $x\in K$ we have
\begin{equation}
\label{e2.1}
\lim_{\eps\to 0}P^{\eps}_{\alpha}(t)\varphi(x)=T^{K}_{\alpha}(t)\varphi_{|K}(x).
\end{equation}
Moreover $T^{K}_{\alpha}(t)$  is a semigroup of linear bounded operators in $B_b(K)$.
\end{prop}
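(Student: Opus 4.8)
The plan is to pass to the limit $\eps\to0$ in the Feynman--Kac formula \eqref{e1.11}: the weight $\exp(-\tfrac1\eps\int_0^t V(X_\alpha(s,x))\,ds)$ tends, as $\eps\to0$, to the indicator of the event on which the occupation integral $Z:=\int_0^t V(X_\alpha(s,x))\,ds$ vanishes, and the whole point is to recognise that event as $\{\tau_x\ge t\}$.

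First I would fix $x\in K$, $t>0$, $\varphi\in B_b(H)$ and note that, since $A^\alpha$ is self-adjoint, the path $s\mapsto X_\alpha(s,x)$ is a.s.\ continuous (\cite{K,T}), so $s\mapsto V(X_\alpha(s,x))$ is a.s.\ continuous and nonnegative and $Z\ge0$ is a well defined random variable. Since $|\varphi(X_\alpha(t,x))e^{-Z/\eps}|\le\|\varphi\|_\infty$ and $e^{-Z/\eps}\to\one_{\{Z=0\}}$ pointwise as $\eps\to0^+$, dominated convergence gives
\[
\lim_{\eps\to0}P^\eps_\alpha(t)\varphi(x)=\E\big[\varphi(X_\alpha(t,x))\,\one_{\{Z=0\}}\big].
\]
Then I would identify, up to a $\p$-null set, $\{Z=0\}$ with $\{\tau_x\ge t\}$. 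On the full-measure event where $X_\alpha(\cdot,x)$ is continuous: as $V$ is continuous, nonnegative, and vanishes \emph{exactly} on $K$ (here the strict inequality $V>0$ on $K^c$ is essential), $Z=0$ means $X_\alpha(s,x)\in K$ for a.e.\ $s\in[0,t]$; by continuity of the path and closedness of $K$ this forces $X_\alpha(s,x)\in K$ for \emph{every} $s\in[0,t]$, and comparing with the definition \eqref{e1.7} of $\tau_x$ — and observing that $X_\alpha(s,x)\in K$ for all $s\in[0,t)$ already gives $Z=0$ — this is equivalent to $\tau_x\ge t$. Since moreover $X_\alpha(t,x)\in\overline K=K$ on $\{\tau_x\ge t\}$, the right-hand side above equals $\E[\varphi_{|K}(X_\alpha(t,x))\one_{\tau_x\ge t}]=T^K_\alpha(t)\varphi_{|K}(x)$, which is \eqref{e2.1}.

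For the last assertion, $T^K_\alpha(0)=I$ is immediate (since $\tau_x\ge0$ always), linearity of $T^K_\alpha(t)$ is clear, and $|T^K_\alpha(t)\psi(x)|\le\|\psi\|_\infty\,\p(\tau_x\ge t)\le\|\psi\|_\infty$, while Borel measurability of $x\mapsto T^K_\alpha(t)\psi(x)$ follows from joint measurability of the affine flow $(x,\omega)\mapsto X_\alpha(t,x)$ and Fubini (or from \eqref{e2.1}, as a pointwise limit of the Borel functions $(P^\eps_\alpha(t)\widetilde\psi)_{|K}$); hence $T^K_\alpha(t)$ is a contraction of $B_b(K)$. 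For the semigroup law $T^K_\alpha(t+s)=T^K_\alpha(t)T^K_\alpha(s)$ I would invoke the Markov property of $X_\alpha$: writing $\{\tau_x\ge t+s\}=\{\tau_x\ge t\}\cap\{X_\alpha(t+r,x)\in K,\ \forall\,r\in[0,s]\}$ and conditioning on $\mathcal F_t$ — given $\mathcal F_t$, the shifted process $(X_\alpha(t+r,x))_{r\ge0}$ is a copy of the solution started at $X_\alpha(t,x)$ with an independent Wiener process — one obtains $\E[\varphi(X_\alpha(t+s,x))\one_{\tau_x\ge t+s}\mid\mathcal F_t]=\one_{\tau_x\ge t}\,(T^K_\alpha(s)\varphi)(X_\alpha(t,x))$ a.s., and taking expectations closes the argument. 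Alternatively one can let $\eps\to0$ in the identity $P^\eps_\alpha(t+s)=P^\eps_\alpha(t)P^\eps_\alpha(s)$, after checking that $P^\eps_\alpha(s)\varphi\to\widetilde{T^K_\alpha(s)\varphi_{|K}}$ pointwise on all of $H$ with a uniform bound, but this needs a double dominated-convergence passage.

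The step I expect to be delicate is the identification of $\{Z=0\}$ with $\{\tau_x\ge t\}$: upgrading the a.e.-in-$s$ membership of the path in $K$ to membership for all $s$, and reconciling it with the definition of $\tau_x$ (including the harmless difference between ``$\ge t$'' and ``$>t$''), which is precisely where a.s.\ path continuity and the strict positivity of $V$ off $K$ are used. The rest is routine measure theory together with the standard Markov property of a linear SDE.
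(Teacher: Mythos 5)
Your proof is correct and follows essentially the same route as the paper: dominated convergence applied to the Feynman--Kac formula, with the key step being the a.s.\ identification of the limit of the weight $e^{-Z/\eps}$ via path continuity of $X_\alpha(\cdot,x)$ and the strict positivity of $V$ on $K^c$ (the paper phrases this contrapositively, showing the weight tends to $0$ on $\{\tau_x<t\}$ because the path spends an interval of positive length in $K^c$ there). Your elaboration of the semigroup law via the Markov property fills in what the paper dismisses as ``straightforward,'' and is fine.
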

\begin{proof}  Let $t>0$, $x\in  K$. Then
$$
\{\tau_x^K\ge t\}=\{\omega\in\Omega:\;X_{\alpha}(s,x)\in   K,\;\forall\;s\in[0,t)\}
$$
and
$$
\{\tau_x^K<t\}=\{\omega\in\Omega:\;\exists\;s_0\in (0,t):\; X_{\alpha}(s_0,x)\in K^c\}
$$
Then we have
$$
P^{\eps}_{\alpha}(t)\varphi(x)=\int_{\{\tau_x^K\ge t\}}\varphi(X_{\alpha}(t,x))d\p+\int_{\{\tau_x^K<t\}}\varphi(X_{\alpha}(t,x))e^{-\frac1\eps\;\int_0^t  V(X_{\alpha}(s,x))ds}d\p
$$
In view of the dominated convergence theorem, to prove the statement  it is enough to show that
\begin{equation}
\label{e2.2}
\lim_{\eps\to 0} e^{-\frac1\eps\;\int_0^t V(X_{\alpha}(s,x))ds}=0,
\end{equation}
for a.a. $\omega$ such that $\tau_x^K(\omega)<t$.

We already mentioned that  $X_{\alpha}(\cdot,x)$  possesses a.s.  continuous paths.   
 Let  $\omega\in \Omega$ be such that $X_{\alpha}(\cdot,x)(\omega)$ is continuous. If $\tau_x^K(\omega)<t$,  there exist $s_0 <t$, $\delta >0$ (depending on $\omega$) such that
$$
 X_{\alpha}(s ,x)   \in K^c,\quad\forall\; s\in [s_0-\delta ,s_0+\delta].
$$
Since $V$ is continuous and it has positive values in $K^c$, then
$$c:= \inf \{ V(X(s,x)):\; s\in [s_0-\delta ,s_0+\delta]\} >0.$$
It follows that
$$
 e^{-\frac1\eps\;\int_0^t V(X_{\alpha}(s,x))ds}\le  e^{-\frac{2c}{\eps}\;\delta }\to 0,\;\mbox{\rm as}\;\eps\to 0.
$$
So,  \eqref{e2.2} holds. The last statement is straightforward.
 \end{proof}

In the next proposition we show that $\mu$ is sub-invariant for $T^{K}_{\alpha}(t)$. 
 We   use the  following notation.
For each $\varphi\in B_b(K)$ we set
$$
\widetilde{\varphi}(x)=\left\{\begin{array}{l}
\varphi(x),\quad\mbox{\rm if}\;x\in K,\\
0, \quad\mbox{\rm if}\;x\notin K.
\end{array}\right.
$$

 \begin{prop}
\label{p3.1}
For any $\varphi\in B_b(K)$, $t>0$,   we have
\begin{equation}
\label{e3.2}
\int_K(T^{K}_{\alpha}(t)\varphi(x))^2\mu(dx)\le \int_K\varphi^2(x)\mu(dx).
\end{equation}
Consequently, $T^{K}_{\alpha}(t)$ can be uniquely extended to a $C_0$ semigroup of contractions in $L^2(K,\mu)$.
\end{prop}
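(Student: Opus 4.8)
The plan is to obtain \eqref{e3.2} by passing to the limit $\eps\to 0$ in the analogous estimate \eqref{e2.4} for the Feynman--Kac semigroups $P^{\eps}_{\alpha}(t)$, which we have already established. First I would fix $\varphi\in B_b(K)$, $t>0$, and consider the null extension $\widetilde{\varphi}\in B_b(H)$. Applying Proposition \ref{p2.2} to $\widetilde{\varphi}$ gives
$$\int_H (P^{\eps}_{\alpha}(t)\widetilde{\varphi}(x))^2\mu(dx)\le \int_H \widetilde{\varphi}^2(x)\mu(dx)=\int_K \varphi^2(x)\mu(dx),$$
for every $\eps>0$, the last equality because $\widetilde{\varphi}$ vanishes outside $K$. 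Since the right-hand side does not depend on $\eps$, it suffices to control the left-hand side from below by $\int_K (T^{K}_{\alpha}(t)\varphi(x))^2\mu(dx)$ in the limit.

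Next I would invoke the pointwise convergence from Proposition \ref{p2.1}: for every $x\in K$ we have $P^{\eps}_{\alpha}(t)\widetilde{\varphi}(x)\to T^{K}_{\alpha}(t)\varphi(x)$ as $\eps\to 0$. To transfer this to the integrals I would restrict the domain of integration in the inequality above to $K$ (which only decreases the left-hand side, the integrand being nonnegative), obtaining
$$\int_K (P^{\eps}_{\alpha}(t)\widetilde{\varphi}(x))^2\mu(dx)\le \int_K \varphi^2(x)\mu(dx),$$
and then apply Fatou's lemma along a sequence $\eps_n\to 0$:
$$\int_K (T^{K}_{\alpha}(t)\varphi(x))^2\mu(dx)=\int_K \liminf_{n\to\infty}(P^{\eps_n}_{\alpha}(t)\widetilde{\varphi}(x))^2\mu(dx)\le \liminf_{n\to\infty}\int_K (P^{\eps_n}_{\alpha}(t)\widetilde{\varphi}(x))^2\mu(dx)\le \int_K \varphi^2(x)\mu(dx).$$
This proves \eqref{e3.2} for $\varphi\in B_b(K)$. (One should note that Proposition \ref{p2.1} is stated for $x\in K$, so the restriction to $K$ in the integrals is exactly what makes the pointwise limit available; the behaviour of $P^{\eps}_{\alpha}(t)\widetilde{\varphi}$ on $K^c$ is irrelevant and is discarded by this restriction.)

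For the final assertion, \eqref{e3.2} says that $T^{K}_{\alpha}(t)$ is a contraction from $(B_b(K),\|\cdot\|_{L^2(K,\mu)})$ into $L^2(K,\mu)$; since $B_b(K)$ is dense in $L^2(K,\mu)$, each $T^{K}_{\alpha}(t)$ extends uniquely to a linear contraction on $L^2(K,\mu)$, and the semigroup law passes to the extension by density. Strong continuity at $t=0$ should be checked on the dense set: for $\varphi\in C_b(K)$ (or, after extending, for $\widetilde\varphi\in C_b(H)$, approximating $\varphi$ uniformly by restrictions of such functions) one has $P^{\eps}_{\alpha}(t)\widetilde\varphi\to\widetilde\varphi$ and $T^{K}_{\alpha}(t)\varphi\to\varphi$ pointwise as $t\to 0$, with a uniform bound $\|T^{K}_{\alpha}(t)\varphi\|_\infty\le\|\varphi\|_\infty$, so dominated convergence gives $L^2$ convergence; the uniform contractivity just proved then upgrades this to strong continuity on all of $L^2(K,\mu)$. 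I expect the only genuinely delicate point to be the justification of strong continuity (the semigroup and contraction properties being either immediate from the definition \eqref{e1.8} or a routine density argument); passing the inequality to the limit is a clean Fatou argument and presents no obstacle.
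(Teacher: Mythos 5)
Your proof of \eqref{e3.2} is correct but takes a more roundabout route than the paper. The paper argues directly from the definition \eqref{e1.8}: by the H\"older (Cauchy--Schwarz) inequality, for $x\in K$ one has $(T^{K}_{\alpha}(t)\varphi(x))^2\le \E[\widetilde{\varphi}^2(X_{\alpha}(t,x))\one_{\tau_x\ge t}]\le T_{\alpha}(t)(\widetilde{\varphi}^2)(x)$, and then integrates over $K$ and uses the invariance of $\mu$ for $T_{\alpha}(t)$ --- two lines, no approximating semigroups. Your detour through $P^{\eps}_{\alpha}(t)$, Proposition \ref{p2.1} and Fatou is logically sound; note only that Proposition \ref{p2.2} is stated for $\varphi\in C_b(H)$ while you apply \eqref{e2.4} to $\widetilde{\varphi}\in B_b(H)$. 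The proof of \eqref{e2.4} (H\"older plus invariance) goes through verbatim for bounded Borel functions, so this is cosmetic --- but once you make that observation you already possess the paper's direct argument, and the limit procedure is superfluous.

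On the ``consequently'' part: contractivity, extension by density and the semigroup law are indeed routine, but your justification of strong continuity has a gap. The pointwise convergence $T^{K}_{\alpha}(t)\varphi(x)\to\varphi(x)$ as $t\to 0$ requires $\p(\tau_x=0)=0$: since $|T^{K}_{\alpha}(t)\varphi(x)|\le \|\varphi\|_{\infty}\,\p(\tau_x\ge t)$, at any point $x$ with $\p(\tau_x=0)=1$ (a point of $K$ regular for $K^c$) one has $T^{K}_{\alpha}(t)\varphi(x)=0$ for every $t>0$. Path continuity rules this out for $x\in\oo{K}$, but not for $x\in\partial K$, and the paper makes no assumption forcing the set of such boundary points to be $\mu$-null. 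Hence dominated convergence yields $T^{K}_{\alpha}(t)\varphi\to\varphi$ in $L^2(K,\mu)$ only modulo that (unstated) hypothesis. To be fair, the paper itself dismisses this step with ``The conclusion follows'' and offers no argument, so you are not worse off --- but you should state explicitly the property of $\partial K$ you are implicitly using rather than present the dominated-convergence step as complete.
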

\begin{proof} By the H\"older inequality we have for all $x\in K$
$$
(T^{K}_{\alpha}(t)\varphi(x))^2\le \E[\varphi^2(X_{\alpha}(t,x))\one_{\tau_x^K\ge t}]
\le \E[\widetilde{\varphi}^2(X_{\alpha}(t,x))\one_{\tau_x^K\ge t}]\le T_{\alpha}(t)(\widetilde{\varphi}^2)(x) .
$$
Since $\mu$ is invariant for $T_{\alpha}(t)$, it follows that
 $$
 \begin{array}{l}
\ds \int_K(T^{K}_{\alpha}(t)\varphi(x))^2d\mu \le \int_KT_{\alpha}(t)(\widetilde{\varphi}^2)(x)d\mu \\
 \\
 \ds\le \int_HT_{\alpha}(t)(\widetilde{\varphi}^2)d\mu \le\int_HT_{\alpha}(t)(\widetilde{\varphi}^2)d\mu \le \int_H\widetilde{\varphi}^2d\mu =\int_K\varphi(x)^2d\mu .
 \end{array}
 $$
 The conclusion follows. \end{proof}

 We shall denote  by $L^{K}_{\alpha}$ the infinitesimal generator of $T^{K}_{\alpha}(t)$ in $L^2(K,\mu)$.

\begin{prop}
\label{p3.2}
For any $f\in L^2(K,\mu)$ and $t>0$ we have
\begin{equation}
\label{e3.4c}
\lim_{\epsilon\to 0} (P^{\eps}_{\alpha}(t) \widetilde{ f})_{|K} = T^{K}_{\alpha}(t) f,\quad \mbox{\rm in}\;L^2(K,\mu)
\end{equation}
and, for $\lambda >0$,
\begin{equation}
\label{e3.4}
\lim_{\epsilon\to 0} (R(\lambda, M^{\eps}_{\alpha} ) \widetilde{ f})_{|K}  =(\lambda-L^{K}_{\alpha})^{-1}f,\quad\mbox{\rm in}\; L^2(K,\mu). 
\end{equation}
 \end{prop}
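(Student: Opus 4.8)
The plan is to prove \eqref{e3.4c} first and then deduce \eqref{e3.4} by taking Laplace transforms. For \eqref{e3.4c}, the pointwise convergence $P^{\eps}_{\alpha}(t)\widetilde{f}(x)\to T^{K}_{\alpha}(t)f(x)$ for $x\in K$ is exactly the content of Proposition \ref{p2.1}, at least when $f\in B_b(K)$ (so that $\widetilde{f}\in B_b(H)$). To upgrade pointwise convergence to $L^2(K,\mu)$ convergence I would invoke dominated convergence on $K$: by the Hölder estimate already used in the proof of Proposition \ref{p3.1}, $(P^{\eps}_{\alpha}(t)\widetilde{f}(x))^2\le T_{\alpha}(t)(\widetilde{f}^2)(x)$ for every $x$, and the right-hand side is a fixed function in $L^1(H,\mu)$ since $\mu$ is invariant for $T_{\alpha}(t)$; hence the family $\{(P^{\eps}_{\alpha}(t)\widetilde{f})_{|K}^2\}_{\eps}$ is dominated by an integrable function independent of $\eps$, and \eqref{e3.4c} follows for $f\in B_b(K)$.

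To pass from $f\in B_b(K)$ to general $f\in L^2(K,\mu)$, I would use a density/equi-boundedness argument. Both $P^{\eps}_{\alpha}(t)$ (as an operator on $L^2(H,\mu)$, by Proposition \ref{p2.2}) and $T^{K}_{\alpha}(t)$ (as an operator on $L^2(K,\mu)$, by Proposition \ref{p3.1}) are contractions, and the null-extension map $f\mapsto\widetilde f$ from $L^2(K,\mu)$ to $L^2(H,\mu)$ is an isometry onto its range; likewise restriction to $K$ is norm-decreasing. Therefore the operators $f\mapsto (P^{\eps}_{\alpha}(t)\widetilde f)_{|K}$ are contractions on $L^2(K,\mu)$ uniformly in $\eps$. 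Given $f\in L^2(K,\mu)$ and $\delta>0$, pick $g\in B_b(K)$ with $\|f-g\|_{L^2(K,\mu)}<\delta$; then a standard $3\delta$-estimate, splitting $(P^{\eps}_{\alpha}(t)\widetilde f)_{|K}-T^{K}_{\alpha}(t)f$ into the piece coming from $g$ (which tends to $0$ by the previous paragraph) and two error terms each bounded by $\delta$ uniformly in $\eps$, gives \eqref{e3.4c} in full generality.

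For \eqref{e3.4}, I would write the resolvents as Laplace transforms: $R(\lambda,M^{\eps}_{\alpha})\widetilde f=\int_0^\infty e^{-\lambda t}P^{\eps}_{\alpha}(t)\widetilde f\,dt$ and $(\lambda-L^{K}_{\alpha})^{-1}f=\int_0^\infty e^{-\lambda t}T^{K}_{\alpha}(t)f\,dt$ (the latter as in \eqref{e1.10a}), both Bochner integrals in $L^2(H,\mu)$ and $L^2(K,\mu)$ respectively. Restricting the first to $K$ and subtracting, the difference is $\int_0^\infty e^{-\lambda t}\big[(P^{\eps}_{\alpha}(t)\widetilde f)_{|K}-T^{K}_{\alpha}(t)f\big]\,dt$; the integrand tends to $0$ in $L^2(K,\mu)$ for each $t>0$ by \eqref{e3.4c}, and it is dominated in $L^2(K,\mu)$-norm by $e^{-\lambda t}\cdot 2\|f\|_{L^2(K,\mu)}$ thanks to the uniform contractivity established above, which is integrable on $(0,\infty)$. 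Dominated convergence for Bochner integrals then yields \eqref{e3.4}.

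The main obstacle I anticipate is purely the bookkeeping needed to turn the pointwise statement of Proposition \ref{p2.1} into an $L^2$ statement uniformly in $\eps$ and then to extend from bounded $f$ to all of $L^2(K,\mu)$; the key quantitative input making this work is the domination $(P^{\eps}_{\alpha}(t)\widetilde f(x))^2\le T_{\alpha}(t)(\widetilde f^2)(x)$ together with the invariance of $\mu$, which simultaneously supplies an $\eps$-independent integrable majorant for the dominated-convergence step and the uniform contractivity needed for the density argument and for the Laplace-transform passage. No new analytic difficulty arises beyond what is already in Propositions \ref{p2.1}--\ref{p3.1}.
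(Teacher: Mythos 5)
Your proposal is correct and follows essentially the same route as the paper: pointwise convergence from Proposition \ref{p2.1} upgraded to $L^2(K,\mu)$ by dominated convergence for a bounded dense class, a $3\delta$/contractivity argument to pass to general $f\in L^2(K,\mu)$, and the Laplace-transform representation of the resolvents for \eqref{e3.4}. The only (immaterial) differences are that the paper uses $C_b(H)$ rather than $B_b(K)$ as the dense class and dominates by the constant $\|f\|_\infty$ rather than by $T_{\alpha}(t)(\widetilde f^{\,2})$.
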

\begin{proof} Let $f\in C_b(H)$. By Proposition \ref{p2.1}, $P^{\eps}_{\alpha} f $ converges pointwise to $T^{K}_{\alpha}(t) f$ in $K$. Moreover, 
$| (P ^{\eps}_{\alpha}(t)f)(x)| \leq \|f\|_{\infty}$, $|(T^{K}_{\alpha}(t)f)(x)| \leq \|f\|_{\infty}$ for each $x\in K$ and $t>0$.  By dominated convergence, 
$\lim_{\eps \to 0} \| P^{\eps}_{\alpha}(t) f - T^{K}_{\alpha}(t) f\|_{L^2(K,\mu)} =0$. 

Let now $f\in L^2(K,\mu)$. Since  $C_b(H)$ is dense in $L^2(H,\mu)$, there is a sequence 
 $(f_n)\subset C_b(H)$   such that
$$
\|\widetilde{f}-f_n\|_{L^2(H,\mu)}\le \frac1n,\quad\forall\;n\in \N.
$$
Then we have
$$
\begin{array}{l}
\|T^{K}_{\alpha}(t)  f-P^{\eps}_{\alpha}(t)\widetilde{ f}\|_{L^2(K,\mu)} \le
\|T^{K}_{\alpha}(t) ( f - f_n)\|_{L^2(K,\mu)}\\
\\
 +\|T^{K}_{\alpha}(t) f_n- P^{\eps}_{\alpha}(t) f_n \|_{L^2(K,\mu)}+\| P^{\eps}_{\alpha}(t) (f_n - \widetilde{f})  \|_{L^2(K,\mu)}
\\
\\
 \le \ds \frac2n+\|T^{K}_{\alpha}(t)  f_n- P^{\eps}_{\alpha}(t) \widetilde{f}_n \|_{L^2(K,\mu)},\quad\forall\;n\in \N , 
\end{array}
$$
and \eqref{e3.4c} follows.

To prove \eqref{e3.4},  we use the identity  (in $L^2(H,\mu)$)
$$R(\lambda, M^{\eps}_{\alpha} ) \widetilde{ f}  = \int_{0}^{\infty} e^{-\lambda t} P^{\eps}_{\alpha}(t)\widetilde{ f} \, dt .$$ 
Taking the restrictions to $K$ of both sides and using \eqref{e3.4c} we   obtain
$$\lim_{\epsilon\to 0} (R(\lambda, M^{\eps}_{\alpha} ) \widetilde{ f})_{|K}  =  \int_{0}^{\infty} e^{-\lambda t} T^{K}_{\alpha}(t) f\,  dt  , $$ 
which coincides with \eqref{e3.4}. 
\end{proof}

\begin{thm}
\label{Identificazione}
For every $\lambda >0$ and $f\in L^2( K,\mu)$, the function $\varphi:=R(\lambda, L^{K}_{\alpha})f$ belongs to $\oo{W}^{1,2}_{\alpha}(K,\mu)$ and satisfies \eqref{LM}.  Therefore, $T^{K}_{\alpha}(t)$ is the semigroup generated by $M_{\alpha}$ in $L^2( K,\mu)$. 
\end{thm}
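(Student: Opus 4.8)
The plan is to realise $\varphi:=R(\lambda,L^{K}_{\alpha})f$ as a weak limit of the penalized resolvents $\varphi_\eps:=R(\lambda,M^{\eps}_{\alpha})\widetilde f$ (with $\widetilde f$ the null extension of $f$), and to read off from the uniform bounds of Proposition~\ref{p2.4} both that $\varphi\in\oo{W}^{1,2}_{\alpha}(K,\mu)$ and that it satisfies \eqref{LM}; uniqueness of the variational solution (Section~\ref{sect:Diri}) then forces $\varphi=R(\lambda,M_{\alpha})f$ for every $\lambda>0$ and $f$, hence $L^{K}_{\alpha}=M_{\alpha}$ and $T^{K}_{\alpha}(t)$ is the semigroup generated by $M_{\alpha}$.

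First I would fix $\lambda>0$ and $f\in L^2(K,\mu)$ and record that, by Proposition~\ref{p2.4}, $\varphi_\eps\in D(L_{\alpha})\subset W^{1,2}_{\alpha}(H,\mu)$ solves \eqref{e2.3}, and by \eqref{e2.7}--\eqref{e2.8} the family $\{\varphi_\eps\}$ is bounded in $W^{1,2}_{\alpha}(H,\mu)$ uniformly in $\eps$. Hence along a sequence $\eps_n\downarrow 0$ we have $\varphi_{\eps_n}\rightharpoonup\psi$ weakly in $W^{1,2}_{\alpha}(H,\mu)$ for some $\psi$; in particular $\varphi_{\eps_n}\rightharpoonup\psi$ in $L^2(H,\mu)$ and $Q^{(1-\alpha)/2}D\varphi_{\eps_n}\rightharpoonup Q^{(1-\alpha)/2}D\psi$ in $L^2(H,\mu;H)$.

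Next I would identify $\psi$ with $\widetilde\varphi$. On $K$: Proposition~\ref{p3.2} gives $(\varphi_\eps)_{|K}\to\varphi$ strongly in $L^2(K,\mu)$, while the weak $L^2(H,\mu)$-convergence gives $(\varphi_{\eps_n})_{|K}\rightharpoonup\psi_{|K}$ in $L^2(K,\mu)$, so $\psi=\varphi$ a.e.\ on $K$. On $K^c$: by \eqref{e2.9}, $\sqrt V\,\varphi_\eps\to 0$ in $L^2(K^c,\mu)$; since multiplication by the bounded function $\sqrt V$ is weakly continuous on $L^2$, also $\sqrt V\,\varphi_{\eps_n}\rightharpoonup\sqrt V\,\psi$ in $L^2(K^c,\mu)$, whence $\sqrt V\,\psi=0$, and since $V>0$ on $K^c$ we conclude $\psi=0$ a.e.\ on $K^c$. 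Therefore $\psi=\widetilde\varphi\in W^{1,2}_{\alpha}(H,\mu)$, i.e.\ $\varphi\in\oo{W}^{1,2}_{\alpha}(K,\mu)$.

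Finally I would pass to the limit in the weak formulation. For an arbitrary $v\in\oo{W}^{1,2}_{\alpha}(K,\mu)$, testing \eqref{e2.3} against $\widetilde v$, integrating over $H$, and using \eqref{e1.6aa} (which extends, for fixed first argument, to any second argument in $W^{1,2}_{\alpha}(H,\mu)$ by density of $\mathcal{E}_{\alpha}(H)$ in $W^{1,2}_{\alpha}(H,\mu)$) gives
\[\lambda\int_H\varphi_\eps\widetilde v\,d\mu+\frac12\int_H\langle Q^{(1-\alpha)/2}D\varphi_\eps,Q^{(1-\alpha)/2}D\widetilde v\rangle\,d\mu+\frac1\eps\int_{K^c}V\varphi_\eps\widetilde v\,d\mu=\int_H\widetilde f\,\widetilde v\,d\mu,\]
and the penalty term is identically zero because $\widetilde v=0$ on $K^c$ while $V=0$ on $K$. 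Letting $\eps=\eps_n\to 0$, the weak convergences above together with the fact that $\widetilde\varphi$ and $\widetilde v$ vanish on $K^c$ turn this identity into exactly \eqref{LM} for $\varphi$, which completes the argument. The one genuinely delicate point is the step on $K^c$: it is precisely estimate \eqref{e2.9}, the quantitative cost of the Feynman--Kac penalization, that forces the weak $W^{1,2}_{\alpha}$-limit $\psi$ to vanish outside $K$, thereby encoding the homogeneous Dirichlet condition built into $\oo{W}^{1,2}_{\alpha}(K,\mu)$; everything else is soft functional analysis (weak compactness in $W^{1,2}_{\alpha}$, weak continuity of bounded multiplication, and the vanishing of the penalty term on admissible test functions).
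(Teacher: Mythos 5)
Your proposal is correct and follows essentially the same route as the paper: uniform $W^{1,2}_{\alpha}(H,\mu)$ bounds from Proposition~\ref{p2.4}, weak compactness, identification of the weak limit with $\widetilde{\varphi}$ using Proposition~\ref{p3.2} on $K$ and estimate \eqref{e2.9} on $K^c$, and passage to the limit in the penalized weak formulation. The only (harmless) cosmetic difference is on $K^c$: you deduce $\sqrt{V}\,\varphi_{\eps}\to 0$ strongly and combine it with weak convergence of $\sqrt{V}\,\varphi_{\eps_n}$, whereas the paper bounds $\int_H \varphi_{\eps_k}\Phi V\one_{K^c}\,d\mu$ directly by H\"older and \eqref{e2.9}; both arguments are valid.
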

\begin{proof}  For $\eps >0$ define $\varphi_{\eps} := R(\lambda, M^{\eps}_{\alpha} ) \widetilde{ f} $. 
By Proposition \ref{p2.3}, $\varphi_{\eps}$ is the solution to \eqref{e2.3}, with $f$ replaced by $ \widetilde{ f} $. 
By Proposition \ref{p2.4}, the $W^{1,2}_{\alpha}(H, \mu)$-norm of 
$\varphi_{\eps}$ is bounded by a constant independent of $\eps$. 
Therefore, there is a sequence $\eps_k\to 0$ such that $ \varphi_{\eps_k} $ converges 
weakly in $W^{1,2}_{ \alpha}(H, \mu)$ to a function $\Phi$. Let us prove that   $\Phi = \widetilde{\varphi}$. 

For every $\psi \in L^2(K, \mu)$ we have 
$$\int_K \Phi \psi \,d\mu =  \lim_{k\to \infty}\int_H  \varphi_{\eps_k}\widetilde{\psi}\,d\mu =  \lim_{k\to \infty}\int_K  \varphi_{\eps_k}\psi\,d\mu 
= \int_K \varphi \psi \,d\mu $$
since, by Proposition \ref{p3.2},  $\lim_{\eps \to 0} \| \varphi_{\eps |K} - \varphi\|_{L^2( K,\mu)} = 0$. Then, $\Phi_{|K} = \varphi$. 

Moreover, 
$$\int_{K^c} \Phi^2V\,d\mu = \int_H  \Phi\cdot \Phi V\one_{K^c}  d\mu = \lim_{k\to \infty}\int_H  \varphi_{\eps_k}\Phi V\one_{K^c}  d\mu ,$$
and by estimate  \eqref{e2.9} and the H\"older inequality we have 
$$\bigg| \int_H  \varphi_{\eps_k}\Phi V\one_{K^c}  d\mu \bigg| \leq  \bigg( \int_{K^c}  \varphi_{\eps_k}^2V\, d\mu \bigg)^{1/2} 
 \bigg( \int_{K^c} \Phi^2V\,d\mu  \bigg)^{1/2} \to 0 \quad \mbox{\rm as}\;k\to \infty .$$
It follows that  $\Phi_{|K^c} =0$. 
Therefore, 
  $\Phi = \widetilde{\varphi}\in W^{1,2}_{\alpha}(H, \mu)$, that is $\varphi \in \oo{W}^{1,2}_{\alpha}(K,\mu)$.

For every $v\in \oo{W}^{1,2}_{\alpha}(K,\mu)$ and $k\in \N$ we have (since $\int_H  V \varphi_{\eps _k} \widetilde{v}\,d\mu = 0$)
$$\lambda \int_H \varphi_{\eps _k} \widetilde{v}\,d\mu + \frac{1}{2} \int_H \langle Q^{(1-\alpha)/2}D\varphi_{\eps _k}, Q^{(1-\alpha)/2}D\widetilde{v} \rangle d\mu   = \int_H fv \,d\mu , $$
and  letting $k\to \infty$ we obtain 
$$\lambda \int_H \widetilde{\varphi} \widetilde{v} \,d\mu + \frac{1}{2} \int_H \langle Q^{(1-\alpha)/2}D\widetilde{\varphi} , Q^{(1-\alpha)/2}D\widetilde{v} \rangle d\mu   = \int_H f\widetilde{v} \,d\mu , $$
so that $\varphi$ satisfies \eqref{LM}, and the statement follows. 
\end{proof}

 \subsection{Consequences}
 
 We list here some consequences of the results of this section, that hold for every $\alpha \in [0,1]$.
 
 \begin{itemize}
 \item[(i)] $T^{K}_{\alpha}(t)$ is an analytic semigroup in $L^p( K,\mu)$ for every $p\in (1, \infty)$. 
 \item[(ii)] The space $ \oo{W}^{1,2}_{\alpha}(K,\mu)$ coincides with the domain of $(I-L^{K}_{\alpha})^{1/2}$. 
 \item[(iii)] For each $f\in L^2( K,\mu)$ we have
 $$\int_K | Q^{(1-\alpha)/2}DT^{K}_{\alpha}(t)f|^2\mu(dx)\le\frac1{\sqrt t}\;\int_K f^2(x)\mu(dx),\quad t>0.$$
 \end{itemize}

These statements follow in a standard way from the fact that the infinitesimal generator $L^K_{\alpha}$ of $T^{K}_{\alpha}(t)$  is the operator associated to the symmetric quadratic form $\mathcal Q_{\alpha}$ defined in \eqref{Q}, and that it is dissipative.   

Less standard consequences are a Poincar\'e inequality in the space   $\oo{W}^{1,2}_{\alpha}(K,\mu)$ and the invertibility of $L^K_{\alpha}$ for $\alpha >0$, proved in the next proposition.

\begin{prop}
\label{Pr:poinc}
For $\alpha \in (0,1]$ the spaces $\oo{W}^{1,2}_{\alpha}(K,\mu)$ and $D(L^{K}_{\alpha})$ are compactly embedded in $ L^2(K,\mu)$. Moreover $0\in \rho (L^{K}_{\alpha})$, and a Poincar\'e inequality holds in $\oo{W}^{1,2}_{\alpha}(K,\mu)$, 
 $$ \|u\|_{L^2( K,\mu)} \leq C \int_K | Q^{(1-\alpha)/2}Du|^2\,d\mu, \quad u\in \oo{W}^{1,2}_{\alpha}(K,\mu).$$
\end{prop}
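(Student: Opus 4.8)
The plan is to deduce everything from the compact embedding, which is the crux. First I would establish that $\oo{W}^{1,2}_{\alpha}(K,\mu)$ embeds compactly in $L^2(K,\mu)$: since by definition the null extension of any $u \in \oo{W}^{1,2}_{\alpha}(K,\mu)$ lies in $W^{1,2}_{\alpha}(H,\mu)$ with equal norm, the embedding factors as $\oo{W}^{1,2}_{\alpha}(K,\mu) \hookrightarrow W^{1,2}_{\alpha}(H,\mu) \to L^2(H,\mu) \to L^2(K,\mu)$, where the first map is a linear isometry onto a closed subspace, the middle map is compact by Proposition \ref{proprieta'}(b) (valid for $\alpha > 0$), and the last is the (bounded) restriction operator. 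A composition with a compact map is compact, so the embedding $\oo{W}^{1,2}_{\alpha}(K,\mu) \hookrightarrow L^2(K,\mu)$ is compact. Since $D(L^{K}_{\alpha}) \subset D((I-L^{K}_{\alpha})^{1/2}) = \oo{W}^{1,2}_{\alpha}(K,\mu)$ with continuous inclusion (consequence (ii) of the preceding subsection, plus the fact that $D(L^K_\alpha)$ carries a finer topology), $D(L^{K}_{\alpha})$ is compactly embedded in $L^2(K,\mu)$ as well.

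Next I would obtain the Poincar\'e inequality by a standard contradiction/compactness argument. Suppose it fails; then there is a sequence $(u_n) \subset \oo{W}^{1,2}_{\alpha}(K,\mu)$ with $\|u_n\|_{L^2(K,\mu)} = 1$ and $\int_K |Q^{(1-\alpha)/2}Du_n|^2\,d\mu \to 0$. Then $(u_n)$ is bounded in $\oo{W}^{1,2}_{\alpha}(K,\mu)$, so along a subsequence it converges weakly in $\oo{W}^{1,2}_{\alpha}(K,\mu)$ and, by the compact embedding just proved, strongly in $L^2(K,\mu)$ to some $u$ with $\|u\|_{L^2(K,\mu)} = 1$. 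By weak lower semicontinuity of the (seminorm) quadratic form $\mathcal Q_\alpha$, we get $\int_K |Q^{(1-\alpha)/2}Du|^2\,d\mu = 0$, so $\widetilde u \in W^{1,2}_\alpha(H,\mu)$ has vanishing gradient. For $\alpha > 0$ this forces $\widetilde u$ to be constant $\mu$-a.e. (by the Hermite-coefficient characterization: $\sum_{\gamma}\sum_h \gamma_h \lambda_h^{-\alpha}\varphi_\gamma^2 = 0$ kills all $\gamma \neq 0$); but $\widetilde u = 0$ on $K^c$ which has positive $\mu$-measure, so the constant is $0$, i.e. $u = 0$ on $K$ — contradicting $\|u\|_{L^2(K,\mu)} = 1$. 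This proves the inequality with some constant $C > 0$.

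Finally, $0 \in \rho(L^{K}_{\alpha})$ follows: $L^K_\alpha$ is the self-adjoint operator associated with the closed symmetric form $\mathcal Q_\alpha$ on $\oo{W}^{1,2}_{\alpha}(K,\mu)$, and the Poincar\'e inequality says precisely that $\mathcal Q_\alpha(u,u) \geq C^{-1}\|u\|_{L^2(K,\mu)}^2$, so the form is coercive; hence $-L^K_\alpha$ is boundedly invertible on $L^2(K,\mu)$ (equivalently, by Lax--Milgram applied with $\lambda = 0$, for every $f \in L^2(K,\mu)$ there is a unique $u \in \oo{W}^{1,2}_{\alpha}(K,\mu)$ solving $\mathcal Q_\alpha(u,v) = \int_K fv\,d\mu$ for all $v$), which means $0 \in \rho(L^K_\alpha)$. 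Equivalently one may note that $\sigma(L^K_\alpha)$ is discrete by the compact resolvent, that $0$ is not an eigenvalue since $L^K_\alpha u = 0$ forces $\mathcal Q_\alpha(u,u)=0$ hence $u = 0$ by Poincar\'e, and that $L^K_\alpha \leq 0$, so $\sup\sigma(L^K_\alpha) < 0$.

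The main obstacle is the first step — making the compactness genuinely rigorous. One must be careful that $\oo{W}^{1,2}_{\alpha}(K,\mu)$ is a \emph{closed} subspace of (an isometric copy inside) $W^{1,2}_{\alpha}(H,\mu)$ so that bounded sets there are bounded in $W^{1,2}_{\alpha}(H,\mu)$, and that the restriction map $L^2(H,\mu) \to L^2(K,\mu)$ is continuous; granting these, compactness of the composite is immediate from Proposition \ref{proprieta'}(b). The contradiction argument and the spectral conclusion are then routine, the only subtlety being the invocation of the $\alpha > 0$ Hermite characterization to conclude that a gradient-free element of $W^{1,2}_\alpha(H,\mu)$ is constant.
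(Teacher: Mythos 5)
Your proof is correct, and the compactness step is identical to the paper's; but you organize the remaining two claims in the opposite order, by a genuinely different mechanism. You first prove the Poincar\'e inequality by the classical contradiction--compactness (Rellich-type) argument in the form domain — normalized sequence with vanishing Dirichlet form, strong $L^2$ limit of norm one, weak lower semicontinuity of $\mathcal{Q}_\alpha$, then ``gradient-free in $W^{1,2}_{\alpha}(H,\mu)$ $\Rightarrow$ constant'' via the Hermite-coefficient characterization, killed by the vanishing on $K^c$ — and only afterwards deduce $0\in\rho(L^{K}_{\alpha})$ from coercivity of the form (Lax--Milgram at $\lambda=0$). The paper goes the other way: it first shows $0\in\rho(L^{K}_{\alpha})$ by noting that the compact resolvent makes the spectrum a sequence of nonpositive eigenvalues and that $0$ is not an eigenvalue (using the global Poincar\'e inequality of Proposition \ref{proprieta'}(a) on $H$ in place of your Hermite computation — the same underlying fact), and then extracts the Poincar\'e inequality on $K$ from the invertibility of $(-L^{K}_{\alpha})^{1/2}$ together with the identification $D((-L^{K}_{\alpha})^{1/2})=\oo{W}^{1,2}_{\alpha}(K,\mu)$ and the identity $\|(-L^{K}_{\alpha})^{1/2}u\|^2_{L^2(K,\mu)}=\tfrac12\int_K|Q^{(1-\alpha)/2}Du|^2d\mu$. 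Your route is more elementary in that the Poincar\'e step does not lean on the square-root functional calculus (you still use consequence (ii) for the compact embedding of $D(L^{K}_{\alpha})$, as does the paper); the paper's route is shorter once that machinery is granted. Both arguments are sound; the only points to keep explicit in yours are that the strong $L^2$ limit and the weak $\oo{W}^{1,2}_{\alpha}$ limit coincide, and that $\mathcal{Q}_\alpha$, being a continuous nonnegative quadratic form on a Hilbert space, is weakly lower semicontinuous — both of which you implicitly have.
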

\begin{proof} 
Since the embedding $W^{1,2}_{\alpha}(H, \mu)\subset L^2(H, \mu)$ is compact by Proposition \ref{proprieta'}(b), the embedding $\oo{W}^{1,2}_{\alpha}(K,\mu) \subset L^2(K,\mu)$ is compact too. 
Indeed,  a sequence $u_n$ is bounded in $\oo{W}^{1,2}_{\alpha}(K,\mu)$ iff  the sequence $\widetilde{u}_n$ is bounded in $W^{1,2}_{\alpha}(H, \mu)$. In this case, there is a subsequence of $   \widetilde{u}_n $ that converges  to a function $v\in L^2(H, \mu)$. Therefore, a subsequence of $u_n$ converges to the restriction  $v_{|K}$, in $L^2(K,\mu)$. 

Since the domain  $D(L^{K}_{\alpha})$ is continuously embedded in $\oo{W}^{1,2}_{\alpha}(K,\mu)$, it is compactly embedded in $L^2( K,\mu)$. Therefore, the spectrum of $L^K_{\alpha}$ consists of (nonpo\-sitive) eigenvalues. Let us prove that $0$ is not an eigenvalue. 

Let $u\in D(L^{K}_{\alpha})$ be such that $L^{K}_{\alpha}u=0$. Then 
$$ 0 = \int_K u L^{K}_{\alpha}u\,d\mu = -\frac{1}{2}\int_K |Q^{(1-\alpha)/2}Du|^2\,d\mu = -\frac{1}{2}\int_H |Q^{(1-\alpha)/2}D\widetilde{u}|^2d\mu ,$$
and by the Poincar\'e inequality in $W^{1,2}_{\alpha}(H,\mu)$ (Proposition \ref{proprieta'}(a)) we have
$$\int_{H} (\widetilde{u} - \int_H\widetilde{u}\, d\mu)^2d\mu =0. $$
So, $\widetilde{u}$ is constant a.e. in $H$, but since it vanishes  in $K^c$, whose measure is positive,  then it vanishes a.e. in $H$. Therefore, $u=0$. 
 
This implies that the seminorm   $u\mapsto  (\int_K |Q^{(1-\alpha)/2}Du|^2\,d\mu)^{1/2}$ is in fact an equivalent norm in $\oo{W}^{1,2}_{\alpha}(K,\mu)$, that is, a Poincar\'e inequality holds in $\oo{W}^{1,2}_{\alpha}(K,\mu)$. Indeed, since $-L^{K}_{\alpha}$ is invertible, also $(-L^{K}_{\alpha})^{1/2}$ is invertible, so that the seminorm $u\mapsto \|(-L^{K}_{\alpha})^{1/2}u\|_{L^2(K,\mu)} = 
\frac{1}{2}\int_K |Q^{(1-\alpha)/2}Du|^2\,d\mu$ is an equivalent  norm in $D((-L^{K}_{\alpha})^{1/2}) = \oo{W}^{1,2}_{\alpha}(K,\mu)$; in other words there is $C>0$ such that  
$\|u\|_{L^2(K,\mu)} \leq C \int_K |Q^{(1-\alpha)/2}Du|^2\,d\mu$ for every $u\in \oo{W}^{1,2}_{\alpha}(K,\mu)$. 
\end{proof}

 \section{Interior regularity} 

 In this section we  prove an interior regularity result for  the solution to \eqref{e1.6a} for $ \alpha <1$.  We use the following lemma.

\begin{lem}
\label{prodotto}
 For every $\varphi \in D(L_{\alpha})$ and for every $\beta\in {\mathcal E}_{\alpha}(H)$, the product $\varphi \beta$ belongs to the domain of   $L_{\alpha}$, and
$$ L_{\alpha}(\varphi\beta) = \beta L_{\alpha}\varphi + \varphi L_{\alpha}\beta + \langle Q^{1-\alpha}D\varphi, D\beta \rangle.$$
\end{lem}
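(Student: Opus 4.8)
The plan is to prove the identity first on the core $\mathcal E_\alpha(H)$, where it is a plain Leibniz computation, and then to extend it by closedness of $L_\alpha$.

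\emph{Step 1: the identity on $\mathcal E_\alpha(H)$.} First observe that $\mathcal E_\alpha(H)$ is an algebra: since $e^{i\langle x,h\rangle}e^{i\langle x,g\rangle}=e^{i\langle x,h+g\rangle}$ and $h,g\in D(A^\alpha)$ implies $h\pm g\in D(A^\alpha)$, the product formulas for $\cos$ and $\sin$ show that the linear span of the functions $\cos\langle x,h\rangle$, $\sin\langle x,h\rangle$ is stable under multiplication. Hence for $\varphi,\beta\in\mathcal E_\alpha(H)$ we have $\varphi\beta\in\mathcal E_\alpha(H)\subset D(L_\alpha)$, so $L_\alpha(\varphi\beta)=\mathcal L_\alpha(\varphi\beta)$, and applying the classical product rules $D(\varphi\beta)=\beta D\varphi+\varphi D\beta$ and $D^2(\varphi\beta)=\beta D^2\varphi+\varphi D^2\beta+D\varphi\otimes D\beta+D\beta\otimes D\varphi$, together with $\mathrm{Tr}[Q^{1-\alpha}(D\varphi\otimes D\beta)]=\langle Q^{1-\alpha}D\varphi,D\beta\rangle=\mathrm{Tr}[Q^{1-\alpha}(D\beta\otimes D\varphi)]$, gives $\mathcal L_\alpha(\varphi\beta)=\beta\mathcal L_\alpha\varphi+\varphi\mathcal L_\alpha\beta+\langle Q^{1-\alpha}D\varphi,D\beta\rangle$. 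This is the asserted formula when $\varphi,\beta\in\mathcal E_\alpha(H)$.

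\emph{Step 2: the right-hand side depends continuously on $\varphi\in D(L_\alpha)$.} Fix $\beta\in\mathcal E_\alpha(H)$ and put $g_\beta(\varphi):=\beta L_\alpha\varphi+\varphi L_\alpha\beta+\langle Q^{1-\alpha}D\varphi,D\beta\rangle$. Since $\beta$ is bounded, $\|\beta L_\alpha\varphi\|_{L^2(H,\mu)}\le C\|L_\alpha\varphi\|_{L^2(H,\mu)}$; since $Q^{(1-\alpha)/2}D\beta$ is bounded (as an $H$-valued function), $\|\langle Q^{(1-\alpha)/2}D\varphi,Q^{(1-\alpha)/2}D\beta\rangle\|_{L^2(H,\mu)}\le C\|Q^{(1-\alpha)/2}D\varphi\|_{L^2(H,\mu;H)}$. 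For the middle term, since $\beta$ is a finite combination of functions $\cos\langle x,h\rangle$, $\sin\langle x,h\rangle$ with $h\in D(A^\alpha)=D(Q^{-\alpha})$, an explicit computation (e.g. $\mathcal L_\alpha(\cos\langle x,h\rangle)=-\tfrac12|Q^{(1-\alpha)/2}h|^2\cos\langle x,h\rangle+\tfrac12\langle x,Q^{-\alpha}h\rangle\sin\langle x,h\rangle$) shows $|L_\alpha\beta(x)|\le C_\beta(1+|x|)$ for all $x\in H$. Hence $\int_H\varphi^2(L_\alpha\beta)^2\,d\mu\le 2C_\beta^2\big(\int_H\varphi^2\,d\mu+\int_H|x|^2\varphi^2\,d\mu\big)$, and by Lemma \ref{emb} together with the continuous embeddings $D(L_\alpha)\subset W^{2,2}_\alpha(H,\mu)\subset W^{1,2}_\alpha(H,\mu)\subset W^{1,2}_0(H,\mu)$ this is $\le C\|\varphi\|^2_{D(L_\alpha)}$ (graph norm). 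Collecting the three estimates, $g_\beta$ is a bounded linear map from $D(L_\alpha)$, with the graph norm, into $L^2(H,\mu)$.

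\emph{Step 3: approximation and conclusion.} Given $\varphi\in D(L_\alpha)$, choose $\varphi_n\in\mathcal E_\alpha(H)$ with $\varphi_n\to\varphi$ in the graph norm; by the embeddings of Step 2 also $Q^{(1-\alpha)/2}D\varphi_n\to Q^{(1-\alpha)/2}D\varphi$ in $L^2(H,\mu;H)$. By Step 1, $\varphi_n\beta\in D(L_\alpha)$ and $L_\alpha(\varphi_n\beta)=g_\beta(\varphi_n)$. Since $\beta$ is bounded, $\varphi_n\beta\to\varphi\beta$ in $L^2(H,\mu)$, and by the continuity in Step 2, $g_\beta(\varphi_n)\to g_\beta(\varphi)$ in $L^2(H,\mu)$. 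Because $L_\alpha$ is closed, this forces $\varphi\beta\in D(L_\alpha)$ and $L_\alpha(\varphi\beta)=g_\beta(\varphi)$, which is exactly the claimed identity.

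The only non-routine point is the $L^2$-integrability (and continuous dependence) of the term $\varphi\,L_\alpha\beta$ in Step 2: this rests on the at-most-linear growth of $L_\alpha\beta$ in $x$ for $\beta\in\mathcal E_\alpha(H)$ and on Lemma \ref{emb}, which controls $\int_H|x|^2\varphi^2\,d\mu$ for $\varphi\in W^{1,2}_0(H,\mu)$. Everything else is the standard Leibniz rule combined with a closedness argument on the core.
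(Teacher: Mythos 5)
Your proof is correct and follows essentially the same route as the paper's: approximate $\varphi$ by a sequence in the core ${\mathcal E}_{\alpha}(H)$, observe that the product with $\beta$ stays in the core where the Leibniz identity is a direct computation, and conclude by closedness of $L_{\alpha}$. The paper compresses all of this into "the statement follows easily"; your Step 2 (in particular the linear-growth bound on $L_{\alpha}\beta$ combined with Lemma \ref{emb} to control $\varphi L_{\alpha}\beta$ in $L^2$) is precisely the detail being elided there, and it is handled correctly.
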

 \begin{proof}  Since ${\mathcal E}_{\alpha}(H)$ is dense in $D(L_{\alpha})$, there is a sequence $(\varphi_n)\subset {\mathcal E}_{\alpha}(H)$ that converges to $\varphi$ in $D(L_{\alpha})$. For every $n$,     $\beta  \varphi_n$ is still in ${\mathcal E}_{\alpha}(H)$, hence it belongs to $D(L_{\alpha})$ and the statement follows easily.  \end{proof}

\begin{prop}
\label{RegPalla}
Assume that 
$$\mbox{\rm Tr}\;Q^{1-\alpha} = \sum_{k=1}^{\infty}\lambda_k^{1-\alpha} <\infty.$$
Then for every $y\in \oo{K}$ and   $r>0$ such that dist$(B(y, r),$ $ \partial K)>0$, the restriction to $B(y, r)$ of the solution $\varphi $ to \eqref{e1.6a} belongs to $W^{2,2}_{\alpha}(B(y, r), \mu)$. 
\end{prop}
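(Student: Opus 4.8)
The plan is to localize the resolvent equation by multiplying $\varphi = R(\lambda, L^K_\alpha)f$ by a smooth cutoff and then use the known $W^{2,2}_\alpha$-regularity of $D(L_\alpha)$ (stated in Section 2) together with the Feynman--Kac approximation from Section 3. First I would pick a cutoff function $\theta \in \mathcal E_\alpha(H)$ (a finite linear combination of exponentials, or a smooth function thereof with bounded derivatives) such that $\theta \equiv 1$ on $B(y,r)$ and $\theta \equiv 0$ outside a slightly larger ball $B(y,r')$ still having positive distance from $\partial K$; the point of working with $\mathcal E_\alpha(H)$ is that Lemma~\ref{prodotto} applies. The strategy is to show that $\theta\widetilde\varphi \in D(L_\alpha)$, which by the continuous embedding $D(L_\alpha)\hookrightarrow W^{2,2}_\alpha(H,\mu)$ (Section 2) immediately gives $\theta\widetilde\varphi \in W^{2,2}_\alpha(H,\mu)$, hence $\varphi_{|B(y,r)} \in W^{2,2}_\alpha(B(y,r),\mu)$ since $\theta=1$ there.

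To prove $\theta\widetilde\varphi \in D(L_\alpha)$, I would work with the approximants $\varphi_\eps := R(\lambda, M^\eps_\alpha)\widetilde f \in D(L_\alpha)$ from Proposition~\ref{p2.4}. Applying Lemma~\ref{prodotto} with $\beta = \theta$ gives $\theta\varphi_\eps \in D(L_\alpha)$ and
$$
L_\alpha(\theta\varphi_\eps) = \theta L_\alpha\varphi_\eps + \varphi_\eps L_\alpha\theta + \langle Q^{1-\alpha}D\varphi_\eps, D\theta\rangle.
$$
Now substitute $L_\alpha\varphi_\eps = \lambda\varphi_\eps + \tfrac1\eps V\varphi_\eps - \widetilde f$ from \eqref{e2.3}. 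The crucial observation is that $\theta$ is supported where $V=0$ (inside $K$), so the term $\tfrac1\eps \theta V\varphi_\eps$ vanishes identically. Hence
$$
L_\alpha(\theta\varphi_\eps) = \lambda\theta\varphi_\eps - \theta\widetilde f + \varphi_\eps L_\alpha\theta + \langle Q^{1-\alpha}D\varphi_\eps, D\theta\rangle.
$$
The right-hand side is controlled uniformly in $\eps$: Proposition~\ref{p2.4} bounds $\|\varphi_\eps\|_{L^2(H,\mu)}$ and $\|Q^{(1-\alpha)/2}D\varphi_\eps\|_{L^2(H,\mu)}$ independently of $\eps$, while $L_\alpha\theta$ and $D\theta$ are bounded functions (here I use $\mathrm{Tr}\,Q^{1-\alpha}<\infty$, which guarantees $L_\alpha\theta = \mathcal L_\alpha\theta \in C_b(H)$ for $\theta \in \mathcal E_\alpha(H)$, since the trace term is finite). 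Moreover $Q^{1-\alpha}D\theta = Q^{(1-\alpha)/2}\cdot Q^{(1-\alpha)/2}D\theta$ pairs against $D\varphi_\eps$ in a way that only sees $Q^{(1-\alpha)/2}D\varphi_\eps$, which is exactly what \eqref{e2.8} bounds. Therefore $L_\alpha(\theta\varphi_\eps)$ is bounded in $L^2(H,\mu)$ uniformly in $\eps$.

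Extracting a subsequence $\eps_k \to 0$, we have $\theta\varphi_{\eps_k} \to \theta\widetilde\varphi$ in $L^2(H,\mu)$ (since $\varphi_{\eps_k} \to \widetilde\varphi$ in $L^2(H,\mu)$ by the proof of Theorem~\ref{Identificazione}, using that $\varphi_{\eps_k|K}\to\varphi$ and $\varphi_{\eps_k}\to 0$ in $L^2(K^c,\mu)$ via \eqref{e2.9}), and $L_\alpha(\theta\varphi_{\eps_k})$ is bounded in $L^2(H,\mu)$, hence weakly convergent along a further subsequence. Since $L_\alpha$ is closed, $\theta\widetilde\varphi \in D(L_\alpha)$ and we conclude by the embedding into $W^{2,2}_\alpha(H,\mu)$. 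The main obstacle I anticipate is making the weak-limit identification fully rigorous — in particular verifying that the weak limit of $L_\alpha(\theta\varphi_{\eps_k})$ is indeed $L_\alpha$ applied to the limit, which requires the closedness (equivalently, weak closedness) of the graph of $L_\alpha$ plus strong $L^2$ convergence of $\theta\varphi_{\eps_k}$; and, more subtly, checking the uniform bound on the cross term $\langle Q^{1-\alpha}D\varphi_{\eps_k},D\theta\rangle$, where one must be careful that $Q^{1-\alpha}D\theta$ lies in $L^2(H,\mu;H)$ with the right weight, using $D\theta \in D(A^{\alpha/2}$-type$)$ bounds available for $\theta\in\mathcal E_\alpha(H)$. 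Everything else is routine.
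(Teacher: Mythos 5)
Your overall strategy --- localize $\varphi_\eps=R(\lambda,M^\eps_\alpha)\widetilde f$ with a cutoff $\theta$ supported in $\oo{K}$, use $\theta V\equiv 0$ to kill the penalization term, get a uniform bound on $\|\theta\varphi_\eps\|_{D(L_\alpha)}$ from Proposition~\ref{p2.4}, and pass to the limit using weak compactness, the closedness of $L_\alpha$ and the embedding $D(L_\alpha)\subset W^{2,2}_{\alpha}(H,\mu)$ --- is exactly the paper's second and third steps. But there is a genuine gap at the very first move: no cutoff function with the required support properties exists in ${\mathcal E}_{\alpha}(H)$, nor among ``smooth functions thereof''. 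Any such function is cylindrical, i.e.\ it depends on $x$ only through finitely many linear functionals $\langle x,h_1\rangle,\dots,\langle x,h_n\rangle$, hence it is constant on the unbounded affine subspaces $x_0+\{h_1,\dots,h_n\}^\perp$. A cylindrical function that equals $1$ on $B(y,r)$ therefore equals $1$ on an unbounded set, which necessarily leaves any bounded $K$ (e.g.\ the closed unit ball); so it cannot vanish outside a ball contained in $\oo{K}$. This also invalidates your appeal to Lemma~\ref{prodotto} with $\beta=\theta$, since that lemma requires $\beta\in{\mathcal E}_{\alpha}(H)$.

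This is precisely the nontrivial content of the paper's proof that your plan skips. The paper takes $\theta(x)=\rho(|x-y|^2)$ with $\rho\in C^2(\R)$, which genuinely depends on all coordinates, and devotes its first step to proving $\theta\in D(L_\alpha)$ by finite-dimensional approximation $\theta_n(x)=\rho(|x_n-y_n|^2)$; it is \emph{there} that the hypothesis $\mathrm{Tr}\,Q^{1-\alpha}<\infty$ enters (the term $\rho'\sum_k\lambda_k^{1-\alpha}$ in $L_\alpha\theta_n$ must converge, since $D^2(|x-y|^2)=2I$), together with a reduction to centers $y\in D(A^{\alpha/2})$ so that the drift term $\sum_k\lambda_k^{-\alpha}\langle x,e_k\rangle\langle x-y,e_k\rangle$ converges in $L^2(H,\mu)$ --- not, as you suggest, to ensure $L_\alpha\theta\in C_b(H)$ for $\theta\in{\mathcal E}_\alpha(H)$ (which holds automatically). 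A second step is then needed to justify the product rule for $\theta\varphi_\eps$ when neither factor lies in ${\mathcal E}_\alpha(H)$: one approximates $\varphi_\eps$ by exponentials $\beta_n$, applies Lemma~\ref{prodotto} to $\theta\beta_n$, and uses closedness of $L_\alpha$. With the cutoff correctly constructed and the product rule justified, the remainder of your argument goes through essentially as in the paper.
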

 \begin{proof}  It is enough to prove that the statement holds for $y\in D(A^{\alpha/2})$. Indeed, since $D(A^{\alpha/2})$ is dense in $H$, for each $y\in \oo{K}$ and   $r>0$ such that dist$(B(y, r),$ $ \partial K)>0$ there are $y_1\in \oo{K}\cap D(A^{\alpha/2})$ and $r_1>r$ such that $B(y, r)\subset B(y_1, r_1)$ and dist$(B(y_1, r),$ $ \partial K)>0$. 
 
So, let $y\in   D(A^{\alpha/2} )$ and let $r_1>r$ be such that the ball $B(y, r_1)$ is contained in $\oo{K}$. Let $\rho :\R\mapsto [0,1]$ be a $C^2$ function  such that 
$$\rho (\xi)=1, \; \xi \leq r^2, \quad \rho(\xi )=0, \;\xi\geq r_1^2, $$ 
and define a cutoff function $\theta$ by 
$$\theta(x) := \rho( |x-y|^2), \quad x\in H.$$
Our aim is to show that the product $\widetilde{\varphi} \theta$ belongs to $ W^{2,2}_{\alpha}(H, \mu)$. Since  the restriction to $B(y, r)$ of $\widetilde{\varphi} \theta$ coincides with  the restriction  to $B(y, r)$ of $\varphi$, the statement will follow. 
 
The proof is in three steps. As a first step, we show that $\theta\in D(L_{\alpha})$. Then we show that  $ \varphi_{\eps} \theta $ belongs to $D(L_{\alpha})$ for every $\eps >0$, where $ \varphi_{\eps} = R(\lambda, M^{\eps}_{\alpha} )\widetilde{f}$. Eventually, we prove that $\widetilde{\varphi} \theta \in W^{2,2}_{\alpha}(H, \mu)$. 
 
\vspace{3mm}
\noindent {\em First step: $\theta\in D(L_{\alpha})$.}
We approach each $x\in H$ by the sequence $x_n = \sum_{k=1}^{n}\langle x, e_k \rangle e_k$, and we consider the sequence of functions
$$\theta_n(x):=    \rho (|x_n-y_n|^2), \quad x\in H, \;n\in \N.$$
Each of them belongs to $D(L_{\alpha})$. This is because it depends only on the first $n$ coordinates, it is bounded and it has bounded first and second order derivatives, and in finite dimensions the inclusion  $C^2_b(H) \subset D(L_{\alpha}) $  holds. Therefore, it is easy to see that 
there exists the limit $\lim_{t\to 0}( T_{\alpha}(t) \theta_n - \theta_n)/t = L_{\alpha} \theta_n$ in $L^2(H, \mu)$, where
\begin{equation}
\label{exp}
\begin{array}{lll}
L_{\alpha} \theta_n (x) & =&   \rho' (|x_n-y_n|^2) \displaystyle{\bigg( \sum_{k=1}^{n} \lambda_k^{1-\alpha} -   \sum_{k=1}^{n}\lambda_k^{-\alpha}  \langle x , e_k\rangle \langle x-y, e_k\rangle   \bigg)}
\\
\\
&  + & 2\rho '' (|x_n-y_n|^2)\langle Q^{1-\alpha}(x_n-y_n), x_n-y_n\rangle .
\end{array} 
\end{equation}

Letting $n\to \infty$, $ \rho' (|x_n-y_n|^2)$ and $\rho '' (|x_n-y_n|^2)\langle Q^{1-\alpha}(x_n-y_n), x_n-y_n\rangle $ converge in $L^2(H, \mu)$ to $\rho'(|x-y|^2)$ and to $\rho '' (|(x -y |^2)\langle Q^{1-\alpha}(x -y ), x -y \rangle $, respectively,  by dominated convergence. The sum $ \sum_{k=1}^{n}\lambda_k^{-\alpha}  \langle x , e_k\rangle \langle x-y, e_k\rangle $ converges too. Indeed, for $p<q\in \N$ we have
$$\begin{array}{l}
\ds{ \| \sum_{k=p}^{q}\lambda_k^{-\alpha}  \langle x , e_k\rangle \langle x-y, e_k\rangle  \|_{L^2(H, \mu)}}
\\
\\
\ds{\leq \sum_{k=p}^{q}\| \lambda_k^{-\alpha/2}  \langle x , e_k\rangle\|_{L^2(H, \mu)}\| \lambda_k^{-\alpha/2}   \langle x-y, e_k\rangle  \|_{L^2(H, \mu)} }
\\
\\
\ds{ = \sum_{k=p}^{q}  \lambda_k^{(1-\alpha)/2}  \lambda_k^{-\alpha/2} (   \lambda_k  + | \langle  y, e_k\rangle|^2)^{1/2} }
\\
\\
\ds{ \leq \sum_{k=p}^{q}  \lambda_k^{(1-\alpha)/2}(   \lambda_k^{(1-\alpha)/2} + \lambda_k^{-\alpha/2} | \langle  y, e_k\rangle|)}
\\
\\
\ds{
\leq  \sum_{k=p}^{q}  \lambda_k^{1-\alpha } + \frac{1}{2}  (  \lambda_k^{1-\alpha }  +  \lambda_k^{-\alpha} | \langle  y, e_k\rangle| ^2),}
\end{array}$$
where $\sum_{k=1}^{\infty}  \lambda_k^{1-\alpha }  <\infty$  by assumption, and $\sum_{k=1}^{\infty}  \lambda_k^{-\alpha} | \langle  y, e_k\rangle|^2 <\infty $ because $y\in D(A^{\alpha/2})$ . Therefore, 
$$\exists L^2(H, \mu)-\lim_{n\to \infty} \sum_{k=1}^{n}\lambda_k^{-\alpha}  \langle x , e_k\rangle \langle x-y, e_k\rangle :=  \langle x, A^{\alpha}(x-y)\rangle .$$
(Note that $\langle x, A^{\alpha}(x-y)\rangle$ is not defined pointwise).
It follows that $ \rho' (|x_n-y_n|^2)\cdot$ $  \sum_{k=1}^{n}\lambda_k^{-\alpha}  \langle x , e_k\rangle \langle x-y, e_k\rangle$ converges to 
$ \rho' (|(x -y |^2) \langle x, A^{\alpha}(x-y)\rangle$ in $L^2(H, \mu)$. Since $L_{\alpha}$ is closed, $\theta\in D( L_{\alpha})$.

\vspace{3mm}
\noindent {\em Second  step: $ \varphi_{\eps} \theta $ belongs to $D(L_{\alpha})$.}

Since $ \varphi_{\eps} \in D(L_{\alpha})$ and ${\mathcal E}_{\alpha}(H)$ is a core of $L_{\alpha}$, there is a sequence of exponential functions  $\beta_n$ that converges to $ \varphi_{\eps}$ in $D(L_{\alpha})$. Since $\theta$ is bounded,  $\beta_n\theta $ converges to $ \varphi_{\eps} \theta $ in $L^2(H, \mu)$. 
By Lemma \ref{prodotto}, $\beta_n\theta$ belongs to $ D(L_{\alpha})$ for every $n$, and we have 
$$ L_{\alpha}( \beta_n \theta) = \beta_n L_{\alpha}\theta + \theta L_{\alpha}\beta_n + \langle Q^{1-\alpha}D\beta_n, D\theta  \rangle.$$
 As $n \to \infty$, $\beta_n$ converges to $  \varphi_{\eps} $, $L_{\alpha}\beta_n $ converges to $ L_{\alpha} \varphi_{\eps} $, and $\langle Q^{1-\alpha}D\beta_n,   D\theta  \rangle = \langle Q^{(1-\alpha)/2}D\beta_n,  Q^{(1-\alpha)/2} D\theta  \rangle $ converges to $  \langle Q^{(1-\alpha)/2 }D 
 \varphi_{\eps} , Q^{(1-\alpha)/2 }D\theta \rangle $ in $L^2(H, \mu)$ since $D(L_{\alpha})\subset W^{1,2}_{\alpha}(H, \mu)$ and $Q^{(1-\alpha)/2} D\theta$ is bounded. 
Therefore, $ L_{\alpha}( \beta_n \theta)$ converges in $L^2(H, \mu)$, and since $ L_{\alpha}$ is closed, $ \varphi_{\eps} \theta $ belongs to $D(L_{\alpha})$ and 
\begin{equation}
L_{\alpha} ( \theta \varphi_{\eps} ) = (L_{\alpha} \theta ) \varphi_{\eps} +  \langle Q^{(1-\alpha)/2}D\theta, Q^{(1-\alpha)/2}D \varphi_{\eps}\rangle + \theta L_{\alpha}  \varphi_{\eps}.
\label{varphitheta}
\end{equation}

\vspace{3mm}
\noindent {\em Third  step: $\widetilde{\varphi} \theta$ belongs to $W^{2,2}_{\alpha}(H, \mu)$.}
Using \eqref{varphitheta} and  \eqref{e2.3} we get 
 $$\lambda \theta \varphi_{\eps}  - L_{\alpha} (\theta \varphi_{\eps} )  = \theta \widetilde{f }-(L_{\alpha} \theta ) \varphi_{\eps} - \langle Q^{(1-\alpha)/2}D\theta, Q^{(1-\alpha)/2}D \varphi_{\eps}\rangle := f_{1, \eps}.$$
 The $L^2$ norm of the right hand side $f_{1, \eps}$ is bounded by a constant independent of $\eps$. Therefore, 
 $\|\theta \varphi_\eps\|_{D( L_{\alpha})}$ is bounded by a constant independent of $\eps$, and since  $D( L_{\alpha})$ is continuously embedded in $W^{2,2}_{\alpha}(H, \mu)$, also  $\|\theta \varphi_\eps\|_{W^{2,2}_{\alpha}(H, \mu)}$ is .
  
Let $\{\eps_k\}$  be the sequence used in  the proof of Proposition \ref{Identificazione}, so that  $  \varphi_{\eps_k}$ converges weakly in $W^{1,2}_{\alpha}(H, \mu)$ to $\widetilde{\varphi}$. Possibly taking a further subsequence,  
 $(\theta \varphi_{\eps_k})$ converges weakly in $W^{2,2}_{\alpha}(H, \mu)$ to a function $u$ that belongs to $W^{2,2}_{\alpha}(H, \mu)$. 
Then $u = \theta \widetilde{\varphi}$; indeed, for each $\psi \in L^2(H, \mu)$ we have 
$$\int_H u\,\psi\,d\mu =  \lim_{k\to \infty}\int_H \theta \varphi_{\eps_k}\psi\,d\mu =  \lim_{k\to \infty}\int_H \theta \widetilde{\varphi}\psi\,d\mu .$$
So, $\theta \widetilde{\varphi} \in W^{2,2}_{\alpha}(H, \mu)$. \end{proof}


 \section{Domains with smooth boundaries}
 
 
In this section we assume that 
$$K = \{ x\in H:\; g(x) \leq 1\}$$
where $g: H\mapsto \R$ is a $C^1$  function that belongs to $D(L_0)$ and satisfies \eqref{cosaserve}. Moreover we assume that $\sup g >1$, so that $K$ is a proper subset of $H$, and $\inf g <1$, so that the interior part of $K$ is not empty and the surface measure $d\sigma$ is well defined in the boundary $\Sigma$ of $K$,  $\Sigma= \{ x\in H:\; g(x) = 1\}$. See the Appendix, to which we refer for the definition and properties of surface measures.

The aim of this section is to give a reasonable definition of the trace at $\partial K$ of any function in $W^{1,2}_{\alpha}(H, \mu)$, and to show that the functions in $\oo{W}^{1,2}_{\alpha}(H, \mu)$ have null trace at $\partial K$. This   implies  that  $R(\lambda, L^{K}_{\alpha})f$ 
satisfies the Dirichlet boundary condition in \eqref{e1.6a} in the sense of the trace for every $f\in L^2(K, \mu)$, and that $T^{K}_{\alpha}(t)f$ has null trace at the boundary for every $t>0$ and $f \in L^2(K,\mu)$.

As a first step we prove   integration formulas  for functions in the core ${\mathcal E}_{0}(H)$.

\begin{prop}
\label{p5.1}
Let $k\in \N$ be such that $D_kg/|Q^{1/2}Dg|\in W^{2,2}_{0}(H, \mu)$. Then 
for every $\varphi\in  {\mathcal E}_{0}(H)$   we have
\begin{equation}
\label{parti} 
\int_{K} D_k\varphi \,d\mu = \frac{1}{\lambda_k} \int_{K} x_k\varphi \,d\mu + \int_{\Sigma} \frac{D_k g}{|Q^{1/2}Dg| }\varphi \,d\sigma.\end{equation}
If $ |Q^{1/2}Dg|\in W^{2,2}_{0}(H, \mu)$, then 
for every $\varphi\in  {\mathcal E}_{0}(H)$   we have

\begin{equation}
\label{e5.1}
\ds{ \int_{\Sigma}  \varphi^2 |Q^{1/2}Dg|   \,d\sigma_1}   
\left\{ \begin{array}{ll}
=   \ds{ \int_K \varphi \langle Q^{1/2}D\varphi, Q^{1/2}Dg\rangle   \,d\mu +  \int_K L_0g \,\varphi^2    \,d\mu } & (a)
\\
\\
= - \ds{ \int_{K^c} \varphi \langle Q^{1/2}D\varphi, Q^{1/2}Dg\rangle   \,d\mu -  \int_{K^c} L_0g \,\varphi^2    \,d\mu } & (b)
\end{array}\right. 
\end{equation}
\end{prop}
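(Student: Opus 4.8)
The plan is to prove all three formulas by one and the same device: replace the characteristic function of $K=\{g\le 1\}$ (resp.\ of $K^c$) by smooth functions of $g$, apply the integration-by-parts identities already at our disposal, and pass to the limit with the help of the surface-integration results of the Appendix. For $n\in\N$ fix a $C^1$ function $\psi_n:\R\to[0,1]$ with $\psi_n\equiv 1$ on $(-\infty,1-1/n]$, $\psi_n\equiv 0$ on $[1,\infty)$ and $\psi_n'\le 0$. Since $g\in D(L_0)\subset W^{1,2}_0(H,\mu)$, $\varphi\in{\mathcal E}_0(H)$ is bounded with bounded derivatives, and $\psi_n$ is bounded and $C^1$, the products $\varphi\,\psi_n(g)$ and $\varphi^2\psi_n(g)$ belong to $W^{1,2}_0(H,\mu)$, with e.g.\ $D_k(\varphi\,\psi_n(g))=\psi_n(g)\,D_k\varphi+\varphi\,\psi_n'(g)\,D_kg$; moreover $\mu(\Sigma)=0$ by the standing assumptions on $g$, so $\psi_n(g)\to\one_K$ $\mu$-a.e.

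To get \eqref{parti} I would apply the integration-by-parts formula \eqref{intparti} (with the second argument equal to the constant $1$) to $u=\varphi\,\psi_n(g)$, obtaining
\[
\int_H\psi_n(g)\,D_k\varphi\,d\mu+\int_H\varphi\,\psi_n'(g)\,D_kg\,d\mu=\frac1{\lambda_k}\int_H x_k\,\varphi\,\psi_n(g)\,d\mu .
\]
As $n\to\infty$, dominated convergence turns the first and third integrals into $\int_K D_k\varphi\,d\mu$ and $\frac1{\lambda_k}\int_K x_k\varphi\,d\mu$. For the middle one I would write $\varphi\,\psi_n'(g)\,D_kg=\bigl(\tfrac{D_kg}{|Q^{1/2}Dg|}\varphi\bigr)\psi_n'(g)\,|Q^{1/2}Dg|$; since $-\psi_n'$ is a nonnegative profile concentrating at $1$, and since $\tfrac{D_kg}{|Q^{1/2}Dg|}\in W^{2,2}_0(H,\mu)$ by hypothesis — which is exactly the regularity needed for the Appendix to give a trace of $\tfrac{D_kg}{|Q^{1/2}Dg|}\varphi$ on $\Sigma$, depending continuously on the level — the coarea/surface-integration results of the Appendix give that this boundary-layer integral converges to $-\int_\Sigma\tfrac{D_kg}{|Q^{1/2}Dg|}\varphi\,d\sigma$. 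Rearranging yields \eqref{parti}.

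For \eqref{e5.1}(a) I would combine the same mollification with the form identity \eqref{e1.6aa} at $\alpha=0$, first extended from $D(L_0)$ to second arguments in $W^{1,2}_0(H,\mu)$ by density (the right-hand side of \eqref{e1.6aa} is continuous on $W^{1,2}_0(H,\mu)$ and $D(L_0)$ is dense in it). Taking there the first argument $g\in D(L_0)$ and the second argument $\varphi^2\psi_n(g)$, expanding $D(\varphi^2\psi_n(g))$ and rearranging gives
\[
-\tfrac12\int_H\varphi^2\,\psi_n'(g)\,|Q^{1/2}Dg|^2\,d\mu=\int_H L_0g\cdot\varphi^2\psi_n(g)\,d\mu+\int_H\varphi\,\psi_n(g)\,\langle Q^{1/2}D\varphi,Q^{1/2}Dg\rangle\,d\mu .
\]
Letting $n\to\infty$, the right-hand side converges (dominated convergence, $\psi_n(g)\to\one_K$) to $\int_K L_0g\,\varphi^2\,d\mu+\int_K\varphi\langle Q^{1/2}D\varphi,Q^{1/2}Dg\rangle\,d\mu$, while the left-hand side, rewritten as a boundary-layer integral of $\varphi^2|Q^{1/2}Dg|$, converges by the Appendix (now using $|Q^{1/2}Dg|\in W^{2,2}_0(H,\mu)$) to $\int_\Sigma\varphi^2|Q^{1/2}Dg|\,d\sigma_1$; this is \eqref{e5.1}(a). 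Then \eqref{e5.1}(b) follows with no further work: \eqref{e1.6aa} with first argument $g$ and second argument $\varphi^2$ (note $\langle Q^{1/2}Dg,Q^{1/2}D(\varphi^2)\rangle=2\varphi\langle Q^{1/2}Dg,Q^{1/2}D\varphi\rangle$) gives the global identity $\int_H L_0g\,\varphi^2\,d\mu+\int_H\varphi\langle Q^{1/2}D\varphi,Q^{1/2}Dg\rangle\,d\mu=0$, and subtracting it from (a), together with $\int_H=\int_K+\int_{K^c}$, replaces the integrals over $K$ in (a) by the negatives of the corresponding integrals over $K^c$.

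The routine ingredients are the dominated-convergence passages and the verification that the various products lie in $W^{1,2}_0(H,\mu)$. The genuinely delicate step — and exactly what forces the hypotheses $\tfrac{D_kg}{|Q^{1/2}Dg|}\in W^{2,2}_0(H,\mu)$ and $|Q^{1/2}Dg|\in W^{2,2}_0(H,\mu)$ — is the identification of the limit of the boundary-layer integrals $\int_H F\,\psi_n'(g)\,|Q^{1/2}Dg|\,d\mu$ with the surface integrals appearing in the statement: this rests on the coarea-type formula for $\mu$ relative to the level sets of $g$, on the (weak) continuity in $r$ of the surface measures on $\{g=r\}$, and on the existence of traces on $\Sigma$ of functions of the regularity produced here, all of which are part of the surface-measure theory developed in the Appendix (where also the normalization relating $\sigma$ and $\sigma_1$ is fixed).
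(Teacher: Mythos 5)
Your argument follows essentially the same route as the paper's: both replace $\one_K$ by a cutoff profile composed with $g$ (yours smooth and one--sided, the paper's piecewise linear and two--sided), apply the Gaussian integration--by--parts identities to the resulting products, and identify the boundary--layer terms as surface integrals through the continuity of the densities $k_\rho$ from the Appendix; your use of \eqref{e1.6aa} with test function $\varphi^2\psi_n(g)$ is just the summed--over--$k$ form of the paper's substitution $\varphi\mapsto\lambda_k\varphi^2D_kg$, and your derivation of (b) from the global identity is exactly the paper's second alternative. The only caveat --- which you share with the paper --- is the normalization in (a): since $-\psi_n'$ integrates to $1$, your left--hand side tends to $\tfrac12\int_{\Sigma}\varphi^2|Q^{1/2}Dg|\,d\sigma$ rather than to $\int_{\Sigma}\varphi^2|Q^{1/2}Dg|\,d\sigma$, so a careful bookkeeping yields \eqref{e5.1}(a) with an extra factor $2$ on the right--hand side (consistent with what one gets by summing \eqref{parti} over $k$, or with a one--dimensional check), and the constant in the printed statement deserves a second look.
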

\begin{proof}  For small $\eps >0$ define the pathwise linear function $\theta_{\eps}$ by
$$\theta_{\eps}(\xi) :=\left\{ \begin{array}{ll} 
2, & \xi\leq 1-\eps, 
\\
\frac{1}{ \eps}(1-\xi) +1, & 1-\eps <\xi < 1+\eps, 
\\
0, & \xi \geq 1+\eps.
\end{array}\right., $$
and set 
$$\rho_{\eps}(x) := \theta_{\eps}(g(x)), \quad x\in H.$$
Since $\theta_{\eps}$ is Lipschitz continuous, then $\rho_{\eps}\in W^{1,2}_{0}(H, \mu)$ (\cite[Rem.~5.2.1]{Bo}). Then   the product $\rho_{\eps}\varphi $ belongs to $W^{1,2}_{0}(H, \mu)$ and $D_k (\rho_{\eps}\varphi )= \theta_{\eps}'(g(x))D_k g(x)\varphi(x)  + \rho_{\eps}(x)D_k  \varphi(x)$, so that 
\begin{equation}
\label{prima}
\int_H (D_k\varphi ) \rho_{\eps} \,d\mu   -\frac{1}{ \eps} \int_{1-\eps<g <1+\eps}  \varphi D_k g \,d\mu = \frac{1}{\lambda_k}
\int_H x_k\varphi  \rho_{\eps} \,d\mu, \quad k\in \N.
\end{equation}

Let us prove \eqref{parti}. Letting $\eps\to 0$, $\rho_{\eps}$ converges pointwise  to $2\one_{K}$ in $H\setminus \Sigma$, whose measure is $1$. Since $\rho_{\eps}\leq 2$, by dominated convergence we get  
$$\exists   \lim_{\eps \to 0} \frac{1}{2 \eps} \int_{1-\eps<g <1+\eps}  \varphi D_k g \,d\mu  = \int_{K} D_k\varphi \,d\mu - \frac{1}{\lambda_k} \int_{K} x_k\varphi \,d\mu .$$
Let us identify the limit in the left hand side as a boundary integral. Since $\varphi D_kg |Q^{1/2}Dg|^{-1} $ $\in $ $W^{2,2}_{0}(H, \mu)$, by Remark \ref{rem:limite} we have
$$\lim_{\eps \to 0}  \frac{1}{2\eps} \int_{1-\eps<g <1+\eps}    \varphi D_k g \,d\mu  = \int_{\Sigma } \frac{D_kg}{ |Q^{1/2}Dg|}  \varphi   \,d\sigma $$
and \eqref{parti} follows. 

Let us prove   \eqref{e5.1}(a). For every $\eps >0$ and $k\in \N$, the function $\rho_{\eps}\varphi ^2D_kg$ still belongs to $W^{1,2}_{0}(H, \mu)$. Therefore we may replace  $\varphi $ in \eqref{prima}  by $\lambda_k \varphi^2D_kg$,  and 
summing over $k$ (recall Lemma   \ref{emb}), we obtain 
$$\int_H 2\varphi \langle Q^{1/2}D\varphi, Q^{1/2}Dg\rangle  \rho_{\eps} \,d\mu + \int_H 2L_0g \,\varphi^2  \rho_{\eps} \,d\mu $$
$$= \frac{1}{ \eps}  \int_{1-\eps<g <1+\eps}  \varphi^2 |Q^{1/2}Dg|^2  \,d\mu$$
Letting $\eps\to 0$ as before, by dominated convergence we get  
$$\lim_{\eps \to 0}  \int_H \varphi  \langle Q^{1/2}D\varphi, Q^{1/2}Dg\rangle  \rho_{\eps} \,d\mu  = 
  \int_K \varphi \langle Q^{1/2}D\varphi, Q^{1/2}Dg\rangle   \,d\mu, $$
$$\lim_{\eps \to 0}  \int_H L_0g \,\varphi^2  \rho_{\eps} \,d\mu =    \int_K L_0g \,\varphi^2    \,d\mu .$$
Therefore, there exists the limit 
$$\lim_{\eps \to 0}  \frac{1}{2\eps} \int_{1-\eps<g <1+\eps}  \varphi^2 |Q^{1/2}Dg|^2  \,d\mu =  \int_K \varphi \langle Q^{1/2}D\varphi, Q^{1/2}Dg\rangle   \,d\mu +  \int_K L_0g \,\varphi^2    \,d\mu $$
that we identify as a boundary integral. Indeed, since $\varphi^2  |Q^{1/2}Dg| \in W^{2,2}_{0}(H, \mu)$,  by  Remark \ref{rem:limite}  we have 
$$\lim_{\eps \to 0}  \frac{1}{2\eps} \int_{1-\eps<g <1+\eps}  \varphi^2 |Q^{1/2}Dg|^2  \,d\mu = \int_{\Sigma}  \varphi^2 |Q^{1/2}Dg|   \,d\sigma.$$
So,  \eqref{e5.1}(a) holds. To prove \eqref{e5.1}(b), we may follow the same procedure replacing $K$ by $K^c$ and $\theta_{\eps}$ by
$$\widetilde{\theta}_{\eps}(\xi) :=\left\{ \begin{array}{ll} 
0, & \xi\leq 1-\eps, 
\\
\frac{1}{ \eps}( \xi -1) +1, & 1-\eps <\xi < 1+\eps, 
\\
2, & \xi \geq 1+\eps ,
\end{array}\right. $$
or else, we may use the equality 
$$  \int_{K^c} \varphi \langle Q^{1/2}D\varphi, Q^{1/2}Dg\rangle   \,d\mu +  \int_{K^c} L_0g \,\varphi^2    \,d\mu $$
$$ = -  \int_K \varphi \langle Q^{1/2}D\varphi, Q^{1/2}Dg\rangle   \,d\mu - \int_K L_0g \,\varphi^2    \,d\mu  $$
that follows from 
$$ \int_H L_0g \, \varphi^2 d\mu = - \frac{1}{2}\int_H \langle Q^{1/2}Dg, Q^{1/2}D(\varphi^2) \rangle d\mu 
= - \int_H \langle Q^{1/2}Dg, Q^{1/2}D \varphi ) \rangle \varphi \, d\mu $$
(see formula \eqref{e1.6aa}). 
\end{proof}

As a second step, with the aid of Proposition \ref{p5.1} we   prove an integration by parts formula in $W^{1,2}_{0}(H, \mu)$ and 
 we   define the {\em trace} $\varphi_{|\Sigma}$ at the boundary $\Sigma$ of any function in $W^{1,2}_{0}(H, \mu)$.

\begin{cor}
\label{MaggTraccia}
Assume that $|Q^{1/2}Dg|\in W^{2,2}_{0}(H, \mu)$, and that $|Q^{1/2}Dg|$ is bounded and $L_0g$ has at most linear growth either on $K$ or on $K^c$. Then for every   $\varphi \in W^{1,2}_{0}(H, \mu)$ there exists $\psi \in L^2(\Sigma, \sigma)$ with the following property: for each sequence $(\varphi_n)\in  {\mathcal E}_{0}(H)$ such that $\lim_{n\to \infty} \|\varphi_n - \varphi \|_{W^{1,2}_{0}(H, \mu)} =0$, the sequence $(\varphi_n |Q^{1/2} Dg|^{1/2}_{|\Sigma})$ converges to $\psi$  in $L^2(\Sigma, \sigma)$.
\end{cor}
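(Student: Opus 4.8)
The plan is to define $\psi$ as an $L^2(\Sigma,\sigma)$ limit and to establish convergence via a Cauchy argument, using the integral identity \eqref{e5.1} to bound the $L^2(\Sigma,\sigma)$-norm of the trace of an element of $\mathcal{E}_0(H)$ by its $W^{1,2}_0(H,\mu)$-norm. Concretely, for $\varphi\in\mathcal{E}_0(H)$, whichever of $K$ or $K^c$ carries the boundedness of $|Q^{1/2}Dg|$ and the linear growth of $L_0g$ — say it is $K$ — I would start from \eqref{e5.1}(a),
$$\int_\Sigma \varphi^2|Q^{1/2}Dg|\,d\sigma_1 = \int_K \varphi\langle Q^{1/2}D\varphi,Q^{1/2}Dg\rangle\,d\mu + \int_K L_0g\,\varphi^2\,d\mu,$$
and estimate the right-hand side. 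The first term is controlled by $\tfrac12\int_K\varphi^2\,d\mu + \tfrac12 C^2\int_K|Q^{1/2}D\varphi|^2\,d\mu$, using that $|Q^{1/2}Dg|\le C$ on $K$, hence by $\max\{1,C^2\}\|\varphi\|_{W^{1,2}_0(H,\mu)}^2$. For the second term, since $|L_0g(x)|\le a+b|x|$ on $K$, Lemma~\ref{emb} gives $\int_K L_0g\,\varphi^2\,d\mu \le a\int_H\varphi^2\,d\mu + b\int_H|x|\varphi^2\,d\mu$, and $\int_H|x|\varphi^2\,d\mu \le (\int_H|x|^2\varphi^2\,d\mu)^{1/2}(\int_H\varphi^2\,d\mu)^{1/2}\le C_0\|\varphi\|_{W^{1,2}_0(H,\mu)}^2$ by Lemma~\ref{emb} and Cauchy--Schwarz. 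Altogether there is a constant $C_1$, depending only on $g$ and $K$, with
$$\int_\Sigma \varphi^2|Q^{1/2}Dg|\,d\sigma_1 \le C_1\|\varphi\|_{W^{1,2}_0(H,\mu)}^2, \qquad \varphi\in\mathcal{E}_0(H).$$

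(Here $\sigma_1$ is the weighted surface measure appearing in the Appendix; one uses its relation to $\sigma$ to rewrite $\int_\Sigma\varphi^2|Q^{1/2}Dg|\,d\sigma_1$ as $\int_\Sigma\varphi^2|Q^{1/2}Dg|^2\,d\sigma$ or directly as $\| \varphi|Q^{1/2}Dg|^{1/2}\|_{L^2(\Sigma,\sigma)}^2$, matching the statement — I would record the precise normalization from the Appendix and quote it.) Next, the map $T:\mathcal{E}_0(H)\to L^2(\Sigma,\sigma)$, $T\varphi = \varphi|Q^{1/2}Dg|^{1/2}_{|\Sigma}$, is linear, and applying the bound to $\varphi_n-\varphi_m$ shows it is continuous for the $W^{1,2}_0(H,\mu)$-norm. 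Since $\mathcal{E}_0(H)$ is dense in $W^{1,2}_0(H,\mu)$ and $L^2(\Sigma,\sigma)$ is complete, $T$ extends uniquely to a bounded linear operator $\overline{T}:W^{1,2}_0(H,\mu)\to L^2(\Sigma,\sigma)$. For $\varphi\in W^{1,2}_0(H,\mu)$ set $\psi := \overline{T}\varphi$. If $(\varphi_n)\subset\mathcal{E}_0(H)$ with $\varphi_n\to\varphi$ in $W^{1,2}_0(H,\mu)$, then $T\varphi_n = \varphi_n|Q^{1/2}Dg|^{1/2}_{|\Sigma}\to\overline{T}\varphi=\psi$ in $L^2(\Sigma,\sigma)$ by continuity, which is exactly the asserted property; uniqueness of $\psi$ is clear since any two candidates agree as the common limit of the same sequence.

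The main obstacle I anticipate is purely a bookkeeping one: getting the right version of the surface-integral identity and the correct weight. The identity \eqref{e5.1} is stated for $\varphi\in\mathcal{E}_0(H)$ with the hypothesis $|Q^{1/2}Dg|\in W^{2,2}_0(H,\mu)$, but to run the estimate I need to know that $\varphi^2|Q^{1/2}Dg|$, $\varphi^2D_kg|Q^{1/2}Dg|^{-1}$, etc. lie in the spaces demanded by the Appendix's surface-measure machinery — this is where the boundedness assumption on $|Q^{1/2}Dg|$ and the linear-growth assumption on $L_0g$ enter, precisely so that the right-hand side of \eqref{e5.1}(a) (or \eqref{e5.1}(b)) is finite and controlled. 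One should also check that $\mathcal{E}_0(H)\ni\varphi$ with $\varphi=0$ as an element of $W^{1,2}_0(H,\mu)$ forces $T\varphi=0$, i.e. the bound really does pass to the extension; this is immediate from the displayed inequality. A secondary point is to confirm that the hypotheses of Corollary~\ref{MaggTraccia} are compatible with applying \eqref{e5.1}: if the good side is $K^c$, one uses \eqref{e5.1}(b) instead, with signs reversed, and the identical estimate goes through since all integrals on the right are over $K^c$.
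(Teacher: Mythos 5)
Your proposal is correct and is essentially the paper's own argument: the paper's proof consists of the single line ``recall formula \eqref{e5.1} and Lemma \ref{emb}'', which is exactly the estimate you spell out (Cauchy--Schwarz plus boundedness of $|Q^{1/2}Dg|$ for the gradient term, linear growth of $L_0g$ plus Lemma \ref{emb} for the zeroth-order term), followed by the standard density extension of the trace map. On your normalization worry, the correct reading is the second one you offer: by Definition \ref{MisSup} the left-hand side of \eqref{e5.1} is precisely $\|\varphi\,|Q^{1/2}Dg|^{1/2}\|_{L^2(\Sigma,\sigma)}^2$, with $\sigma_1=\sigma$ the surface measure on $\Sigma=\Sigma_1$.
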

\begin{proof}  It is sufficient to   recall formula  \eqref{e5.1} and Lemma \ref{emb}. 
\end{proof}

Note that the assumptions of Corollary \ref{MaggTraccia} are satisfied by the functions $g$ in Example \ref{examples} of the Appendix.

\begin{defn} Under the assumptions of Corollary 
\ref{MaggTraccia}, for each $\varphi \in W^{1,2}_{0}(H, \mu)$ the trace of $\varphi$ at $\Sigma$  is defined by 
$$\varphi_{| \Sigma} = \frac{\psi}{|Q^{1/2} Dg|^{1/2}},$$
where $\psi$ is given by Corollary 
\ref{MaggTraccia}. 
\end{defn}

Note that in general $\varphi_{|\Sigma} $ does not belong to $L^2(\Sigma, \sigma)$, 
because $ |Q^{1/2} Dg|^{-1/2}$ may be unbounded in $\Sigma$. Of course, if $ |Q^{1/2} Dg|^{-1/2}$ is   bounded in $\Sigma$ (that is, if $\inf_{\Sigma} |Q^{1/2} Dg| >0$), then $\varphi_{|\Sigma} \in L^2(\Sigma, \sigma)$ for every $\varphi \in W^{1,2}_{0}(H, \mu)$ and 
the mapping $W^{1,2}_{0}(H, \mu)\mapsto L^2(\Sigma, \sigma)$, 
$\varphi \mapsto \varphi_{|\Sigma}$ is continuous. 

In general, we have the following lemma.

\begin{lem}
\label{tracciaL1}
Under the assumptions of Corollary \ref{MaggTraccia}, for every $\varphi \in W^{1,2}_{0}(H, \mu)$, $\varphi_{|\Sigma} \in L^1(\Sigma, \sigma)$ and the mapping $W^{1,2}_{0}(H, \mu)$ $\mapsto $ $L^1(\Sigma, \sigma)$, 
$\varphi \mapsto \varphi_{|\Sigma}$ is continuous. 
\end{lem}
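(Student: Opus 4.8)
The goal is to show that the trace map $\varphi\mapsto\varphi_{|\Sigma}$ is well-defined and continuous as a map into $L^1(\Sigma,\sigma)$, even though $|Q^{1/2}Dg|^{-1/2}$ may be unbounded on $\Sigma$. The starting point is Corollary \ref{MaggTraccia}, which already gives a continuous trace map $\varphi\mapsto\psi=(\varphi|Q^{1/2}Dg|^{1/2})_{|\Sigma}$ from $W^{1,2}_0(H,\mu)$ into $L^2(\Sigma,\sigma)$, so that $\|\psi\|_{L^2(\Sigma,\sigma)}\le C\|\varphi\|_{W^{1,2}_0(H,\mu)}$. By definition $\varphi_{|\Sigma}=\psi/|Q^{1/2}Dg|^{1/2}$, so I must control $\int_\Sigma|\psi|\,|Q^{1/2}Dg|^{-1/2}\,d\sigma$. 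The key analytic input I would use is that the measure $|Q^{1/2}Dg|\,d\sigma$ is finite on $\Sigma$: this follows by taking $\varphi\equiv1$ in formula \eqref{e5.1}, which gives $\int_\Sigma|Q^{1/2}Dg|\,d\sigma=\int_K L_0g\,d\mu$ (or the $K^c$ version), a finite quantity under the growth hypotheses of Corollary \ref{MaggTraccia} together with Lemma \ref{emb}. Denote $\Sigma$ with this finite measure, and write $\nu(d x):=|Q^{1/2}Dg(x)|\,\sigma(dx)$, a finite measure on $\Sigma$.

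\textbf{Key steps.} First I would record that $\nu(\Sigma)<\infty$ as above. Second, for $\varphi\in{\mathcal E}_0(H)$ I estimate, using Cauchy--Schwarz with respect to $\nu$,
\[
\int_\Sigma|\varphi|\,d\sigma=\int_\Sigma\frac{|\varphi|\,|Q^{1/2}Dg|^{1/2}}{|Q^{1/2}Dg|^{1/2}}\,d\sigma
=\int_\Sigma\frac{|\varphi||Q^{1/2}Dg|^{1/2}}{|Q^{1/2}Dg|}\,d\nu
\le \nu(\Sigma)^{1/2}\Big(\int_\Sigma\frac{\varphi^2|Q^{1/2}Dg|}{|Q^{1/2}Dg|^2}\,d\nu\Big)^{1/2},
\]
but the cleaner route is simply $\int_\Sigma|\varphi|\,d\sigma=\int_\Sigma|\varphi|\,|Q^{1/2}Dg|^{-1/2}\cdot|Q^{1/2}Dg|^{-1/2}\,d\nu$, then Cauchy--Schwarz in $\nu$ gives
\[
\int_\Sigma|\varphi|\,d\sigma\le\Big(\int_\Sigma\varphi^2|Q^{1/2}Dg|^{-1}\,d\nu\Big)^{1/2}\Big(\int_\Sigma|Q^{1/2}Dg|^{-1}\,d\nu\Big)^{1/2}.
\]
Hmm — the second factor is $\int_\Sigma d\sigma$, which need not be finite. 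So I would instead pair differently: $|\varphi|=\big(|\varphi||Q^{1/2}Dg|^{1/2}\big)\cdot|Q^{1/2}Dg|^{-1/2}$ and apply Cauchy--Schwarz in $\sigma$ against the weight $|Q^{1/2}Dg|^{-1}$, i.e.
\[
\int_\Sigma|\varphi|\,d\sigma\le\Big(\int_\Sigma\varphi^2|Q^{1/2}Dg|\,d\sigma\Big)^{1/2}\Big(\int_\Sigma|Q^{1/2}Dg|^{-1}\,d\sigma\Big)^{1/2}.
\]
Again the second factor is problematic. The robust fix, which I would adopt, is to split $\Sigma=\{|Q^{1/2}Dg|\ge 1\}\cup\{|Q^{1/2}Dg|<1\}=:\Sigma_1\cup\Sigma_2$. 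On $\Sigma_1$, $|Q^{1/2}Dg|^{-1/2}\le1$, so $\int_{\Sigma_1}|\varphi|\,d\sigma\le\int_{\Sigma_1}|\varphi||Q^{1/2}Dg|^{1/2}\,d\sigma\le\|\psi\|_{L^1(\Sigma_1,\sigma)}$, and on $\Sigma_2$, $\sigma(\Sigma_2)=\int_{\Sigma_2}1\,d\sigma\le\int_{\Sigma_2}|Q^{1/2}Dg|^{-1}\,d\nu$ — still not obviously finite.

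\textbf{Main obstacle and resolution.} The real obstacle is precisely that $\sigma$ itself may be infinite near the set where $|Q^{1/2}Dg|$ degenerates, so one cannot directly bound $\int_\Sigma|\varphi|\,d\sigma$ by an $L^2$-type quantity against $\sigma$. The correct strategy, which I expect to be the heart of the proof, is to integrate against the finite measure $\nu$ throughout and never against $\sigma$: write $\varphi_{|\Sigma}=\psi\cdot|Q^{1/2}Dg|^{-1/2}$ and note $\varphi_{|\Sigma}\,d\sigma=\psi\cdot|Q^{1/2}Dg|^{-3/2}\,d\nu$, then Cauchy--Schwarz in $\nu$ gives $\int_\Sigma|\varphi_{|\Sigma}|\,d\sigma\le\|\psi\|_{L^2(\Sigma,\nu)}\big(\int_\Sigma|Q^{1/2}Dg|^{-3}\,d\nu\big)^{1/2}$ — but $|Q^{1/2}Dg|^{-3}\,d\nu=|Q^{1/2}Dg|^{-2}\,d\sigma$, not finite. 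This confirms that a naive estimate fails, and the proof must instead go back to formula \eqref{prima} with a test function chosen to produce $|\varphi|$ (rather than $\varphi^2$) on the boundary. Concretely, I would revisit the derivation of \eqref{e5.1} but replace $\varphi^2$ by $|\varphi|$ — or, since $|\varphi|\notin{\mathcal E}_0(H)$, by a smooth approximation $\sqrt{\varphi^2+\delta^2}-\delta$ — to obtain, for $\varphi\in{\mathcal E}_0(H)$,
\[
\int_\Sigma|\varphi|\,|Q^{1/2}Dg|\,d\sigma = \int_K\mathrm{sgn}(\varphi)\langle Q^{1/2}D\varphi,Q^{1/2}Dg\rangle\,d\mu+\int_K L_0g\,|\varphi|\,d\mu,
\]
whose right-hand side is bounded by $\|Q^{1/2}D\varphi\|_{L^2}\|Q^{1/2}Dg\|_{L^2}+\|L_0g\|_{L^2(K,|x|^2d\mu\text{-growth})}\|\varphi\|\le C\|\varphi\|_{W^{1,2}_0(H,\mu)}$ by Lemma \ref{emb} and the growth hypotheses. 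This shows $\psi=(\varphi|Q^{1/2}Dg|^{1/2})_{|\Sigma}\in L^1(\Sigma,|Q^{1/2}Dg|^{1/2}d\sigma)$ with the right bound, i.e. $\int_\Sigma|\varphi_{|\Sigma}|\,d\sigma=\int_\Sigma|\varphi|\,d\sigma\le C\|\varphi\|_{W^{1,2}_0(H,\mu)}$ for $\varphi\in{\mathcal E}_0(H)$. Finally, density of ${\mathcal E}_0(H)$ in $W^{1,2}_0(H,\mu)$ and this uniform bound, combined with the already-established $L^2$-convergence of $(\varphi_n|Q^{1/2}Dg|^{1/2}_{|\Sigma})$ from Corollary \ref{MaggTraccia} (which identifies the limit), give that $\varphi_{|\Sigma}\in L^1(\Sigma,\sigma)$ and that $\varphi\mapsto\varphi_{|\Sigma}$ is continuous into $L^1(\Sigma,\sigma)$. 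The one technical point to verify carefully is that the approximation $\mathrm{sgn}(\varphi)$ by $\varphi/\sqrt{\varphi^2+\delta^2}$ passes to the limit $\delta\to0$ in \eqref{prima} — this is routine dominated convergence once one notes $|\varphi/\sqrt{\varphi^2+\delta^2}|\le1$ and the boundary-integral limit identification from Remark \ref{rem:limite} applies to $\sqrt{\varphi^2+\delta^2}|Q^{1/2}Dg|\in W^{2,2}_0(H,\mu)$ uniformly in $\delta$.
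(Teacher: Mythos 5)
There is a genuine gap in your final step. The modified integration-by-parts identity you propose (with $|\varphi|$, or its smooth approximation, in place of $\varphi^2$ in the derivation of \eqref{e5.1}) produces on the boundary the quantity $\int_\Sigma |\varphi|\,|Q^{1/2}Dg|\,d\sigma$, with the weight $|Q^{1/2}Dg|$ still attached --- exactly as \eqref{e5.1} itself produces $\int_\Sigma\varphi^2|Q^{1/2}Dg|\,d\sigma$ rather than $\int_\Sigma\varphi^2\,d\sigma$ (see Remark \ref{rem:limite}: the boundary limit of $\frac{1}{2\eps}\int_{1-\eps<g<1+\eps}(\cdot)\,|Q^{1/2}Dg|^2d\mu$ always carries one power of $|Q^{1/2}Dg|$). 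Your concluding ``i.e.\ $\int_\Sigma|\varphi_{|\Sigma}|\,d\sigma\le C\|\varphi\|_{W^{1,2}_{0}(H,\mu)}$'' therefore does not follow: what you have shown is $\psi\in L^1(\Sigma,|Q^{1/2}Dg|^{+1/2}d\sigma)$, whereas $\int_\Sigma|\varphi_{|\Sigma}|\,d\sigma=\int_\Sigma|\psi|\,|Q^{1/2}Dg|^{-1/2}\,d\sigma$ requires $\psi\in L^1(\Sigma,|Q^{1/2}Dg|^{-1/2}d\sigma)$. The exponent of the weight has the wrong sign, and this is precisely where the possible degeneracy of $|Q^{1/2}Dg|$ on $\Sigma$ bites; the obstacle you correctly identified at the outset has not actually been removed.

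The missing idea is the one you wrote down and then discarded: in the Cauchy--Schwarz bound $\int_\Sigma|\varphi|\,d\sigma\le\big(\int_\Sigma\varphi^2|Q^{1/2}Dg|\,d\sigma\big)^{1/2}\big(\int_\Sigma|Q^{1/2}Dg|^{-1}\,d\sigma\big)^{1/2}$, the second factor is not ``problematic'' --- it is finite, and proving that is the whole content of the lemma. The paper's argument: the hypotheses in \eqref{cosaserve} (specifically $\|Q^{1/2}D^2g\,Q^{1/2}\|_{\mathcal{L}(H)}/|Q^{1/2}Dg|^2\in L^{2}(H,\mu)$ and $|Q^{1/2}Dg|^{-1}\in L^4(H,\mu)$) imply that $\widetilde\varphi:=|Q^{1/2}Dg|^{-1}$ belongs to $W^{1,2}_{0}(H,\mu)$; applying Corollary \ref{MaggTraccia} to $\widetilde\varphi$ itself shows that $\widetilde\varphi\,|Q^{1/2}Dg|^{1/2}=|Q^{1/2}Dg|^{-1/2}$ has a trace in $L^2(\Sigma,\sigma)$, i.e.\ $\int_\Sigma|Q^{1/2}Dg|^{-1}\,d\sigma<\infty$. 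Then $\varphi_{|\Sigma}=\psi\,|Q^{1/2}Dg|^{-1/2}$ is a product of two $L^2(\Sigma,\sigma)$ functions, hence lies in $L^1(\Sigma,\sigma)$, and the bound $\|\varphi_{|\Sigma}\|_{L^1(\Sigma,\sigma)}\le\||Q^{1/2}Dg|^{-1/2}\|_{L^2(\Sigma,\sigma)}\,\|\psi\|_{L^2(\Sigma,\sigma)}\le C\|\varphi\|_{W^{1,2}_{0}(H,\mu)}$ gives continuity. Your $|\varphi|$-variant of \eqref{e5.1} is then not needed (and would in any case require justifying that $\sqrt{\varphi^2+\delta^2}\,|Q^{1/2}Dg|\in W^{2,2}_{0}(H,\mu)$ uniformly in $\delta$, which is delicate since the second derivatives of $\sqrt{\varphi^2+\delta^2}$ degenerate as $\delta\to0$).
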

\begin{proof}  Since $\varphi_{|\Sigma} = \psi |Q^{1/2} Dg|^{-1/2}$ with $\psi \in L^2(\Sigma, \sigma)$, it is sufficient to prove that 
  $|Q^{1/2} Dg|^{-1/2} \in L^2(\Sigma, \sigma)$.  The assumptions $  \|Q^{1/2}D^2g\,Q^{1/2}\|_{{\mathcal L}(H)}/ |Q^{1/2}Dg |^2\in L^{2}(H, \mu)$ and $|Q^{1/2} Dg|^{-1 }\in L^4(H, \mu)$, that are contained in assumption \eqref{cosaserve}, imply that the function $\widetilde{\varphi} :=  |Q^{1/2} Dg|^{-1 }$ belongs to 
$W^{1,2}_{0}(H, \mu)$. By   Corollary \ref{MaggTraccia}, $\widetilde{\varphi} |Q^{1/2} Dg|^{1/2} $ $=$ $ |Q^{1/2} Dg|^{-1/2}$ has trace in $L^2(\Sigma, \sigma)$.
\end{proof}

\begin{cor}
Let the assumptions of Corollary \ref{MaggTraccia} be satisfied. The following statements hold for every $\alpha \in [0,1]$. 
\label{PartiTraccia}
\begin{itemize}
\item[(i)] If $D_kg/|Q^{1/2}Dg| \in W^{2,2}_{0}(H, \mu)$, for every $\varphi \in W^{1,2}_{\alpha}(H, \mu)$  the integration by parts 
formula \eqref{parti}  holds. 
\item[(ii)] If $\varphi \in \oo{W}^{1,2}_{\alpha}(K, \mu)$, its trace at $\Sigma_1$ vanishes. 
\end{itemize}
\end{cor}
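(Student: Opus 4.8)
The plan is to prove the two statements of Corollary \ref{PartiTraccia} by density, reducing each to the corresponding fact for functions in the core ${\mathcal E}_0(H)$, which is exactly what Proposition \ref{p5.1} provides. For part (i), I would start from formula \eqref{parti}, valid for $\varphi\in{\mathcal E}_0(H)$ by Proposition \ref{p5.1}. Fix $\varphi\in W^{1,2}_{\alpha}(H,\mu)$ and a sequence $(\varphi_n)\subset{\mathcal E}_{\alpha}(H)\subset {\mathcal E}_0(H)$ converging to $\varphi$ in $W^{1,2}_{\alpha}(H,\mu)$ (recall ${\mathcal E}_{\alpha}(H)$ is dense in $W^{1,2}_{\alpha}(H,\mu)$ by definition, and $W^{1,2}_{\alpha}(H,\mu)\hookrightarrow W^{1,2}_0(H,\mu)$, so $\varphi_n\to\varphi$ in $W^{1,2}_0(H,\mu)$ as well). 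Then I pass to the limit in \eqref{parti} term by term: the left-hand side $\int_K D_k\varphi_n\,d\mu\to\int_K D_k\varphi\,d\mu$ since $D_k$ is continuous from $W^{1,2}_0(H,\mu)$ to $L^2(H,\mu)$ (here $\lambda_k^{1/2}D_k$ is a bounded component of $Q^{1/2}D$); the term $\lambda_k^{-1}\int_K x_k\varphi_n\,d\mu\to\lambda_k^{-1}\int_K x_k\varphi\,d\mu$ by Lemma \ref{emb}, which controls $\int_H x_k^2\varphi^2\,d\mu$ by the $W^{1,2}_0$-norm; and the boundary term $\int_{\Sigma}(D_kg/|Q^{1/2}Dg|)\varphi_n\,d\sigma$ converges because, writing it as $\int_\Sigma (D_kg/|Q^{1/2}Dg|^{3/2})\cdot(\varphi_n|Q^{1/2}Dg|^{1/2})\,d\sigma$, the second factor converges in $L^2(\Sigma,\sigma)$ to $\psi$ by Corollary \ref{MaggTraccia}, while the first factor is in $L^2(\Sigma,\sigma)$ under the hypothesis $D_kg/|Q^{1/2}Dg|\in W^{2,2}_0(H,\mu)$ together with the integrability already exploited in Lemma \ref{tracciaL1} (the trace of $D_kg/|Q^{1/2}Dg|$ is $(D_kg/|Q^{1/2}Dg|)_{|\Sigma}=\psi_g/|Q^{1/2}Dg|^{1/2}$ with $\psi_g\in L^2(\Sigma,\sigma)$, and $|Q^{1/2}Dg|^{-1/2}\in L^2(\Sigma,\sigma)$). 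This identifies the limit as $\int_\Sigma (D_kg/|Q^{1/2}Dg|)\,\varphi_{|\Sigma}\,d\sigma$ by the very definition of $\varphi_{|\Sigma}$, giving \eqref{parti} for $\varphi$.

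For part (ii), let $\varphi\in\oo{W}^{1,2}_{\alpha}(K,\mu)$, so that its null extension $\widetilde\varphi$ lies in $W^{1,2}_{\alpha}(H,\mu)\subset W^{1,2}_0(H,\mu)$; denote by $\widetilde\varphi_{|\Sigma}$ its trace as defined above, equivalently $\psi=\widetilde\varphi_{|\Sigma}|Q^{1/2}Dg|^{1/2}$ with $\psi\in L^2(\Sigma,\sigma)$. The idea is to apply formula \eqref{e5.1} to $\widetilde\varphi$. Take $(\varphi_n)\subset{\mathcal E}_0(H)$ with $\varphi_n\to\widetilde\varphi$ in $W^{1,2}_0(H,\mu)$. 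Applying \eqref{e5.1}(b) to each $\varphi_n$ one has $\int_\Sigma\varphi_n^2|Q^{1/2}Dg|\,d\sigma = -\int_{K^c}\varphi_n\langle Q^{1/2}D\varphi_n,Q^{1/2}Dg\rangle\,d\mu - \int_{K^c}L_0g\,\varphi_n^2\,d\mu$. Now pass to the limit: the left side tends to $\int_\Sigma\psi^2\,d\sigma = \|\psi\|^2_{L^2(\Sigma,\sigma)}$ by Corollary \ref{MaggTraccia}; on the right side, $\varphi_n\to\widetilde\varphi$ and $Q^{1/2}D\varphi_n\to Q^{1/2}D\widetilde\varphi$ in $L^2(H,\mu)$, $Q^{1/2}Dg$ is bounded on $K^c$ by hypothesis, and $L_0g$ has at most linear growth on $K^c$ so that $\int_{K^c}L_0g\,\varphi_n^2\,d\mu\to\int_{K^c}L_0g\,\widetilde\varphi^2\,d\mu$ using Lemma \ref{emb} (and the elementary fact that $L^2$-convergence of $\varphi_n$ with a uniform bound on $\int x_k^2\varphi_n^2$ forces convergence of these weighted integrals, via a Cauchy-Schwarz splitting). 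Hence $\|\psi\|^2_{L^2(\Sigma,\sigma)} = -\int_{K^c}\widetilde\varphi\langle Q^{1/2}D\widetilde\varphi,Q^{1/2}Dg\rangle\,d\mu - \int_{K^c}L_0g\,\widetilde\varphi^2\,d\mu$. But $\widetilde\varphi\equiv 0$ on $K^c$, so the right-hand side vanishes, whence $\psi=0$ in $L^2(\Sigma,\sigma)$ and therefore $\widetilde\varphi_{|\Sigma}=\psi/|Q^{1/2}Dg|^{1/2}=0$. (If the growth/boundedness hypotheses hold on $K$ rather than $K^c$, use \eqref{e5.1}(a) instead; then the right-hand side involves integrals over $K$ of $\widetilde\varphi=\varphi$, and one argues that it equals $-\int_{K^c}(\cdots)$, which is $0$ by the same extension argument — or, more directly, note that the trace computed from the $K$-side and the $K^c$-side must agree, and one of the two sides is manifestly zero.)

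The routine part is the passage to the limit in the bulk integrals, handled by the continuous embedding into $W^{1,2}_0(H,\mu)$ and by Lemma \ref{emb}. \emph{The main obstacle} I anticipate is the boundary term: one must be careful that the sequence $\varphi_n|Q^{1/2}Dg|^{1/2}_{|\Sigma}$ converges in $L^2(\Sigma,\sigma)$ \emph{independently of the approximating sequence}, which is precisely the content of Corollary \ref{MaggTraccia}, and then that the multiplier $D_kg/|Q^{1/2}Dg|$ (resp. the weight $|Q^{1/2}Dg|$) has enough integrability on $\Sigma$ for the identification of the limit as the stated boundary integral — this is where the hypotheses $D_kg/|Q^{1/2}Dg|\in W^{2,2}_0(H,\mu)$ and $|Q^{1/2}Dg|\in W^{2,2}_0(H,\mu)$ enter through Lemma \ref{tracciaL1} and its proof. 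A secondary subtlety is keeping track of whether the linear-growth hypothesis is assumed on $K$ or on $K^c$; the two cases are symmetric but require choosing \eqref{e5.1}(a) or \eqref{e5.1}(b) accordingly.
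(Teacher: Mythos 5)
Your proposal is correct and follows essentially the same route as the paper: reduce to $\alpha=0$ via the embedding $W^{1,2}_{\alpha}\subset W^{1,2}_{0}$, prove (i) by approximating with ${\mathcal E}_0(H)$ and passing to the limit in \eqref{parti} (the paper invokes Lemma \ref{tracciaL1} for the boundary term, while you pair $\varphi_n|Q^{1/2}Dg|^{1/2}$ in $L^2(\Sigma,\sigma)$ against $D_kg/|Q^{1/2}Dg|^{3/2}$ --- both work, since $|D_kg|/|Q^{1/2}Dg|\le\lambda_k^{-1/2}$), and prove (ii) from \eqref{e5.1}(b) using that the null extension vanishes on $K^c$. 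Your extra care about whether the growth hypotheses hold on $K$ or on $K^c$, resolved via the identity relating the two sides of \eqref{e5.1}, is a detail the paper leaves implicit.
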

\begin{proof} Since $ W^{1,2}_{\alpha}(H, \mu)\subset  W^{1,2}_{0}(H, \mu)$, and $ \oo{W}^{1,2}_{\alpha}(K, \mu)\subset  \oo{W}^{1,2}_{0}(K, \mu)$, it is enough to prove that the statements hold for $\alpha =0$. 

\noindent  (i) It is sufficient to approach   every $\varphi \in W^{1,2}_{0}(H, \mu)$ by a sequence $(\varphi_n)\subset  {\mathcal E}_{0}(H)$, and to recall  Lemma \ref{tracciaL1}. 

\noindent (ii) If $\varphi \in \oo{W}^{1,2}_{0}(K, \mu)$, it vanishes a.e. in $K^c$, and   formula \eqref{e5.1}(b) yields the statement.
\end{proof}



\appendix

 \section{Surface integrals}

We consider level surfaces of smooth functions  $g$. We refer to \cite[\S 6.10]{Bo}, where the functions $g$ under consideration belong to the space $W^{\infty}(H, \mu)$ defined by 
$$W^{\infty}(H, \mu):=  \bigcap _{k\in \N, p>1}W^{k,p}(H, \mu)$$
and $W^{k,p}(H, \mu)$ is the completion of the smooth cylindrical   functions$^($\footnote{that is,  functions of the type  $f(x) =\varphi( \langle x, x_1\rangle, \ldots , \langle x, x_n\rangle)$ with $x_1$, \ldots, $x_n \in H$ and $\varphi \in C^{\infty}_{b}(\R^n)$.}$^)$ in the norm
$$\|f\|_{k,p} : = \|f\|_{L^p(H, \mu) } +  \sum_{j=1}^{k} \bigg( \int_H \bigg[ \sum_{i_1, \ldots, i_j\geq 1}  (\lambda_{i_1}\cdot \ldots \cdot \lambda_{i_k}D_{i_1}\ldots D_{i_k}f(x))^2  \bigg]^{p/2} \mu(dx) \bigg)^{1/p}$$
(In particular, the spaces $W^{k,2}(H, \mu)$ coincide  with our $W^{k,2}_{0}(H, \mu)$ for $k=1, 2$). 

Another assumption is 
$$|Q^{1/2}Dg|^{-1}\in  \bigcap _{  p>1}L^{p}(H, \mu).$$

Our aim here is to give a simplified presentation of surface measures in the case of a Hilbert space setting, under less heavy (although less elegant) assumptions on $g$.

For any continuous $g:H\mapsto \R$ and $r$ in the range of $g$ let us define  the level sets 
$$\Sigma_r:= \{x\in H:\; g(x)= r \}. $$
We shall define  probability measures on the surfaces $\Sigma_r$ with $r$ in the interior part of the range of $g$. To this aim, a first step is the study of  the image of $\mu$ on $\R$ under the mapping $g$, defined by 
 $$(\mu \circ g^{-1})(I) := \mu(g^{-1}(I)), \quad I\in \B(\R).$$
We shall give sufficient conditions for  $\mu \circ g^{-1}$ have continuous (in fact, $W^{1,2}$) density $k$ with respect to the Lebesgue measure. Similarly, for $\rho \in L^1(H, \mu)$ we shall consider the signed measure   
$$(\rho \mu)(B) := \int_B \rho(x)\mu(dx), \quad B\in \B(H)$$
and its image under the mapping $g$,
 $$(\rho \mu \circ g^{-1})(I) := (\rho \mu)(g^{-1}(I)), \quad I\in \B(\R),$$
and we shall give sufficient conditions for $\rho \mu \circ g^{-1}$ have continuous density $k_{\rho}$  with respect to the Lebesgue measure. 
A key role will be played by the function $\psi$ defined by 
\begin{equation}
\label{psi}
\psi :=     \frac{L_0g }{ |Q^{1/2}Dg|^2}  - \frac{\langle Q^{1/2}D^2g \,Q^{1/2} \cdot Q^{1/2}Dg, Q^{1/2}Dg\rangle}{ |Q^{1/2}Dg|^4} ,
\end{equation}
if $g\in D(L_0)$. 
We shall use the following lemma.

\begin{prop}
\label{Pr:ek}
Let  $g\in D( L_0)$ be such that $|Q^{1/2}Dg|^{-1}\in L^4(H, \mu)$. Then 
\begin{itemize}
\item[(a)] 
$\mu \circ g^{-1}$ is absolutely continuous with respect to the Lebesgue measure. 
\item[(b)] 
If a function  $\rho \in  W^{1,1}_{0}(H, \mu)$ is such that 
\begin{equation}
\label{rho}
\psi \rho   \in L^1(H, \mu), \quad \frac{|Q^{1/2}D\rho|}{|Q^{1/2}Dg|} \in L^1(H, \mu), 
\end{equation}
where $\psi$ is defined in \eqref{psi},   then   $\rho \mu \circ g^{-1}$ is absolutely continuous with respect to the Lebesgue measure. 
\end{itemize}
\end{prop}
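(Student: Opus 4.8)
The plan is to prove that the images $\mu\circ g^{-1}$ and $(\rho\mu)\circ g^{-1}$ are absolutely continuous by testing them against arbitrary $C_b^1$ functions on $\R$ and showing that the resulting linear functional on test functions extends to a bounded functional on $C_b(\R)$, in fact that it is given by integration against an $L^1_{loc}$ density. The natural mechanism is a change-of-variables / coarea computation: for $\phi\in C_b^1(\R)$ one forms $\int_H \phi'(g(x))\,d\mu$ and rewrites $\phi'(g(x)) = \langle Q^{1/2}D(\phi\circ g), Q^{1/2}Dg\rangle / |Q^{1/2}Dg|^2$, then integrates by parts using \eqref{e1.6aa} (equivalently \eqref{intpartiG}) to move the derivative off $\phi\circ g$. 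The point of the hypothesis $|Q^{1/2}Dg|^{-1}\in L^4(H,\mu)$ is precisely to guarantee that the vector field $G:= Q^{1/2}Dg/|Q^{1/2}Dg|^2$ and its divergence are integrable enough for this integration by parts to make sense.

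More concretely, for part (a) I would fix $\phi\in C^1_b(\R)$ with bounded derivative supported in a compact interval, and write
$$\int_\R \phi'(r)\,(\mu\circ g^{-1})(dr) = \int_H \phi'(g(x))\,\mu(dx) = \int_H \Big\langle Q^{1/2}D(\phi\circ g), \frac{Q^{1/2}Dg}{|Q^{1/2}Dg|^2}\Big\rangle \,d\mu.$$
Applying the integration by parts formula \eqref{intpartiG} (after the usual approximation argument, justified because $g\in D(L_0)$ so $g\in W^{2,2}_0(H,\mu)$ and $|Q^{1/2}Dg|^{-1}\in L^4$ controls the new factors) transfers the derivative onto $G = Q^{1/2}Dg/|Q^{1/2}Dg|^2$, producing
$$\int_\R \phi'(r)\,(\mu\circ g^{-1})(dr) = \int_H \phi(g(x))\,\Psi(x)\,d\mu$$
for a suitable $\Psi\in L^1(H,\mu)$ built out of $\psi$ (that is exactly why $\psi$, defined in \eqref{psi}, enters: it is $\operatorname{div}G = L_0g/|Q^{1/2}Dg|^2$ minus the derivative of $|Q^{1/2}Dg|^{-2}$ contracted with $Q^{1/2}Dg$, up to the drift term $\langle x, AG\rangle/|Q^{1/2}Dg|^2$). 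The right-hand side equals $\int_\R \phi(r)\,(\Psi\mu\circ g^{-1})(dr)$, so $\mu\circ g^{-1}$ has distributional derivative equal to a finite signed measure, hence is absolutely continuous with a density of locally bounded variation; in particular one gets the $W^{1,1}$ density claimed in the surrounding discussion. For part (b) the same computation is run with $\phi\circ g$ replaced by $(\phi\circ g)\rho$: using $D((\phi\circ g)\rho) = \phi'(g)\,Dg\,\rho + (\phi\circ g)D\rho$ and dividing through by $|Q^{1/2}Dg|^2$, one isolates $\int_H \phi'(g)\rho\,d\mu$ and sees that it equals $\int_H \phi(g)\big(\psi\rho - \langle Q^{1/2}D\rho, Q^{1/2}Dg\rangle/|Q^{1/2}Dg|^2\big)\,d\mu$ plus boundary-free terms; the two conditions in \eqref{rho} are exactly what is needed for the integrand on the right to be in $L^1(H,\mu)$, so again $(\rho\mu)\circ g^{-1}$ has a distributional derivative that is a finite measure, hence is absolutely continuous.

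The main obstacle I anticipate is the rigorous justification of the integration by parts when $\phi\circ g$ (or $(\phi\circ g)\rho$) is only in $W^{1,1}$ rather than in the core $\mathcal E_0(H)$: formula \eqref{intpartiG} is stated for $C^1_b$ functions and vector fields in $\Lambda_0$, so one must approximate $g$ by cylindrical functions $g_n$ (available since $g\in D(L_0)$ and $\mathcal E_0(H)$ is a core, together with the Hermite-polynomial truncations used elsewhere in the paper) and pass to the limit in all the terms $\phi'(g_n)$, $\phi(g_n)\psi_n$, etc. The integrability hypotheses $|Q^{1/2}Dg|^{-1}\in L^4(H,\mu)$ and \eqref{rho} are precisely tuned so that each factor appearing after the Leibniz expansion lies in a dual pair of $L^p$ spaces by Hölder, and $\psi\in L^1$ follows from $g\in D(L_0)$ together with $|Q^{1/2}Dg|^{-1}\in L^4$ via Cauchy--Schwarz on the two terms in \eqref{psi} (the first term pairs $L_0g\in L^2$ with $|Q^{1/2}Dg|^{-2}\in L^2$, the second pairs $\|Q^{1/2}D^2g\,Q^{1/2}\|\in L^2$ with $|Q^{1/2}Dg|^{-2}\in L^2$). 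Once the limiting identity is in hand, absolute continuity is immediate from the standard fact that a finite Borel measure on $\R$ whose distributional derivative is again a finite measure is absolutely continuous.
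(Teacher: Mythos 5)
Your proposal follows essentially the same route as the paper: rewrite $\varphi'\circ g$ as $\langle Q^{1/2}D(\varphi\circ g),Q^{1/2}Dg\rangle/|Q^{1/2}Dg|^2$, integrate by parts against $\mu$ so that the "divergence" term $\psi$ appears, use $g\in D(L_0)$ together with $|Q^{1/2}Dg|^{-1}\in L^4(H,\mu)$ (and, for (b), the conditions \eqref{rho}) to get an $L^1$ bound, and conclude from $|\int_{\R}\varphi'\,d(\mu\circ g^{-1})|\le C\|\varphi\|_\infty$ that the image measure is absolutely continuous. The only differences are cosmetic (you leave the exact constants and the approximation by cylindrical functions implicit, as does the paper), so the argument is correct and matches the paper's proof.
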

\begin{proof}
To prove  statement (a) we shall show that there exists  $C>0$ such that 
\begin{equation}
\label{condsuff}\bigg| \int_{\R}\varphi'(r) (\mu \circ g^{-1})(dr) \bigg| \leq C\| \varphi\|_{\infty}, \quad \varphi \in C_b^1(\R).
\end{equation}

For each  $k\in \N$ we have 
\begin{equation}
\label{dercomp} D_k( \varphi \circ g )(x) = \varphi'(g(x))D_kg(x) , \quad x\in H, 
\end{equation}
so that  
\begin{equation}
\label{gradcomp} \langle D ( \varphi \circ g) (x) , QDg(x)\rangle = ( \varphi' \circ g)(x)|Q^{1/2}Dg(x) |^2, \quad x\in H, 
\end{equation}
i.e.
\begin{equation}
\label{comp}(  \varphi' \circ g)(x) = \frac{  \langle Q^{1/2}D ( \varphi \circ g) (x) , Q^{1/2}Dg(x)\rangle }{|Q^{1/2}Dg(x) |^2}, \quad a.e.\; x\in H.  
\end{equation}
Therefore, 
$$  \int_{\R}\varphi'(r)( \mu \circ g^{-1})(dr) = \int_H  \varphi' \circ g\,d\mu = \int_H  \frac{ \sum_{k}\lambda_k   D_k ( \varphi \circ g) (x)   D_kg(x)  }{|Q^{1/2}Dg(x) |^2} d\mu .$$
Integrating by parts and recalling that
\begin{equation}
\label{derquoz} D_k \bigg(  \frac{1}{ |Q^{1/2}Dg|^2} \bigg) = -2 \;\frac{\sum_{i} \lambda_iD_ig D_{ik}g}{ |Q^{1/2}Dg|^4} 
\end{equation}
we obtain 
$$\begin{array}{ll}
\ds {\int_H  \varphi' \circ g\,d\mu } & =    \ds{- \int_H  \varphi \circ g \sum_{k} \lambda_k D_k\bigg( \frac{D_kg}{ |Q^{1/2}Dg|^2}\bigg) d\mu 
+\int_H \varphi \circ g \sum_{k}  \frac{x_kD_kg}{ |Q^{1/2}Dg|^2}\,d\mu}
\\
\\
& =  \ds{- \int_H  \varphi \circ g \sum_{k} \lambda_k  \bigg( \frac{D_{kk}g }{ |Q^{1/2}Dg|^2} -2D_kg   \frac{\sum_{i} \lambda_iD_ig D_{ik}g}{ |Q^{1/2}Dg|^4} \bigg)d\mu}
\\
\\
&  +  \ds{ \int_H \varphi \circ g \sum_{k}  \frac{x_kD_kg}{ |Q^{1/2}Dg|^2}\,d\mu}
\\
\\
& =   \ds{ -  2 \int_H   (\varphi \circ g)(x)\psi(x)d\mu},
\end{array}$$
where the function $\psi$ is defined in  \eqref{psi}.  The first addendum in $\psi$, $ L_0g / |Q^{1/2}Dg|^2$, belongs to 
 $L^1(H, \mu)$ since both $ L_0g $ and  $1 / |Q^{1/2}Dg|^2$ are in $L^2(H, \mu)$.  Concerning the second addendum we have
$$   \frac{|\langle Q^{1/2}D^2g\,Q^{1/2}\cdot Q^{1/2}Dg, Q^{1/2}Dg\rangle |}{ |Q^{1/2}Dg|^4}   \leq \frac{ \| Q^{1/2}D^2g\,Q^{1/2}\|_{{\mathcal L}(H)}}{ |Q^{1/2}Dg|^2} .$$
Recalling that  there exists $C_0>0$ such that (\cite[Thm.~ 5.7.1]{Bo}) 
$$\|x\mapsto  \| Q^{1/2}D^2g\,Q^{1/2}\|_{{\mathcal L}(H)}\|_{L^2(H, \mu)} \leq C_0 \|g\|_{D(L_0)},$$
it follows that the second addendum in $\psi$ belongs to 
 $L^1(H, \mu)$. Then formula \eqref{condsuff}   follows, with  $C= \|\psi\|_{L^1(H, \mu)} \leq $ const. $( \|g\|_{D(L_0)} + \| |Q^{1/2}Dg|^{-1}\|_{L^4(H. \mu)})$. 
 
\vspace{3mm}

We prove  statement (b) by the same procedure, replacing $\mu$ by $\rho \mu$. For every  $\varphi \in C_b^1(\R)$ we have 
$$\begin{array}{lll}
\ds {\int_H ( \varphi' \circ g)\,\rho \, d\mu } & = &\ds{  \int_H   \sum_{k}\lambda_k   D_k ( \varphi \circ g) (x)   D_kg(x)\frac{\rho(x)  }{|Q^{1/2}Dg(x) |^2} d\mu }
\\
\\
&=&  \ds{  \int_H  \varphi \circ g \bigg( -2 \psi \rho - \frac{\langle Q^{1/2}Dg, Q^{1/2}D\rho\rangle}{|Q^{1/2}Dg(x) |^2} \bigg) d\mu }
\end{array}$$
where  $\psi$ is the function defined  in \eqref{psi}.  Assumption  \eqref{rho} implies that the functions $\psi \rho$ and $ \langle Q^{1/2}Dg, Q^{1/2}D\rho\rangle/|Q^{1/2}Dg(x) |^2$  belong  to  $L^1(H, \mu)$. Then, 
$$\bigg| \int_{\R}\varphi'(r) (\mu \circ g^{-1})(dr) \bigg|  = \bigg| \int_H ( \varphi' \circ g)\,\rho \, d\mu \bigg| 
\leq C\| \varphi\|_{\infty}, \quad \varphi \in C_b^1(\R)$$
with   $C = 2\|\psi\rho\|_{L^1} + \| \frac{|Q^{1/2}D\rho|}{|Q^{1/2}Dg|}\|_{L^1}$.  The statement follows. 
\end{proof}

\begin{prop}
\label{Pr:contk}
Let the assumptions of Proposition \ref{Pr:ek} hold. Then:
\begin{itemize}
\item[(a)] If the function $\psi$ defined in \eqref{psi} belongs to  $W^{1,2}_{0}(H, \mu)$, then the density $k$ of $\mu \circ g^{-1}$ belongs to  $W^{1,1}(\R)$. 
\item[(b)] If $\rho \in  W^{1,1}_{0}(H, \mu)$ satisfies \eqref{rho} and moreover, setting 
$$\rho_1 :=  2\psi \,\rho + \frac{\langle Q^{1/2}Dg, Q^{1/2}D\rho\rangle}{ |Q^{1/2}Dg |^2}$$
we have  $\rho_1 \in  W^{1,1}_{0}(H, \mu)$,  $\psi \rho_1   \in L^1(H, \mu)$, $ \frac{|Q^{1/2}D\rho_1|}{|Q^{1/2}Dg|} \in L^1(H, \mu)$, then 
$k_{\rho}\in W^{1,1}(\R)$. 
\end{itemize}
\end{prop}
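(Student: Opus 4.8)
The plan is to differentiate, in the sense of distributions on $\R$, the two identities established inside the proof of Proposition \ref{Pr:ek}, and then to identify the distributional derivatives of $k$ and of $k_{\rho}$ with $L^1$ densities of image measures, via a second application of Proposition \ref{Pr:ek}(b). For statement (a), I would start from the identity obtained in the proof of Proposition \ref{Pr:ek}(a): for every $\varphi\in C^1_b(\R)$,
\[
\int_{\R}\varphi'(r)\,k(r)\,dr \;=\; -2\int_H(\varphi\circ g)\,\psi\,d\mu \;=\; -2\int_{\R}\varphi(r)\,(\psi\mu\circ g^{-1})(dr),
\]
the last equality being the image-measure formula applied to the finite signed measure $\psi\mu$. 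The point is that $\rho=\psi$ verifies the hypotheses of Proposition \ref{Pr:ek}(b): since $\mu$ is a probability measure, $\psi\in W^{1,2}_0(H,\mu)$ forces $\psi\in W^{1,1}_0(H,\mu)$ and $\psi^2\in L^1(H,\mu)$, while $|Q^{1/2}D\psi|\in L^2(H,\mu)$ together with the standing assumption $|Q^{1/2}Dg|^{-1}\in L^4(H,\mu)$ gives, by H\"older's inequality, $|Q^{1/2}D\psi|/|Q^{1/2}Dg|\in L^{4/3}(H,\mu)\subset L^1(H,\mu)$. Hence $\psi\mu\circ g^{-1}$ is absolutely continuous, and because $\psi\in L^1(H,\mu)$ its density $h$ belongs to $L^1(\R)$. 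The displayed identity then reads $\int_{\R}\varphi'\,k=-2\int_{\R}\varphi\,h$ for every $\varphi\in C^1_b(\R)$, hence in particular for every $\varphi\in C_c^{\infty}(\R)$, so $k$ has distributional derivative $2h\in L^1(\R)$; together with $k\in L^1(\R)$ this yields $k\in W^{1,1}(\R)$.

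For statement (b) I would argue in exactly the same way, starting instead from the identity in the proof of Proposition \ref{Pr:ek}(b):
\[
\int_{\R}\varphi'(r)\,k_{\rho}(r)\,dr \;=\; \int_H(\varphi'\circ g)\,\rho\,d\mu \;=\; -\int_H(\varphi\circ g)\,\rho_1\,d\mu \;=\; -\int_{\R}\varphi(r)\,(\rho_1\mu\circ g^{-1})(dr),
\]
with $\rho_1=2\psi\rho+\langle Q^{1/2}Dg,Q^{1/2}D\rho\rangle/|Q^{1/2}Dg|^2$. The three conditions imposed on $\rho_1$ in the statement --- $\rho_1\in W^{1,1}_0(H,\mu)$, $\psi\rho_1\in L^1(H,\mu)$, $|Q^{1/2}D\rho_1|/|Q^{1/2}Dg|\in L^1(H,\mu)$ --- are precisely the hypotheses of Proposition \ref{Pr:ek}(b) applied with $\rho_1$ in place of $\rho$. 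Consequently $\rho_1\mu\circ g^{-1}$ is absolutely continuous with density $k_{\rho_1}$, and $k_{\rho_1}\in L^1(\R)$ because $\rho_1\in L^1(H,\mu)$. Thus $k_{\rho}$ has distributional derivative $k_{\rho_1}\in L^1(\R)$, and since $k_{\rho}\in L^1(\R)$ we conclude $k_{\rho}\in W^{1,1}(\R)$.

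All the steps are routine once Proposition \ref{Pr:ek} is available; I do not expect a genuine obstacle here, only some bookkeeping: the delicate point is to check carefully that $\psi$ in part (a), and $\rho_1$ in part (b), satisfy every integrability requirement needed to re-apply Proposition \ref{Pr:ek}(b), where the finiteness of $\mu$ is exactly what turns the $W^{1,2}_0$ hypothesis on $\psi$ into something usable.
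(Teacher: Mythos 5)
Your argument is correct, and it rests on exactly the same two ingredients as the paper's proof: the integration-by-parts identity $\int_H(\varphi'\circ g)\,\rho\,d\mu=-\int_H(\varphi\circ g)\,\rho_1\,d\mu$ (with $\rho\equiv 1$, $\rho_1=2\psi$ in part (a)) and a second application of Proposition \ref{Pr:ek}(b), to $\psi$ in part (a) and to $\rho_1$ in part (b); your verification that $\psi$ meets the hypotheses of Proposition \ref{Pr:ek}(b) matches the paper's. The one genuine difference is in the packaging of the final step: the paper tests against $\varphi''$, recomputes $\mathrm{Tr}\,(QD^2(\varphi\circ g))$ via the chain rule to obtain $\int_H(\varphi''\circ g)\,d\mu=-2\int_H(\varphi'\circ g)\,\psi\,d\mu$, and then deduces $k'\in L^1(\R)$ from the bound $\bigl|\int_{\R}\varphi''\,d(\mu\circ g^{-1})\bigr|\le C_1\|\varphi\|_\infty$, whereas you reuse the first-order identity verbatim and identify the distributional derivative outright as $k'=2k_\psi$ (resp.\ $k_\rho'=k_{\rho_1}$), the density supplied by Proposition \ref{Pr:ek}(b). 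Your route bypasses the second-derivative computation entirely (which is in any case the first-order identity applied to $\varphi'$) and has the small added benefit of an explicit formula for $k'$; the paper's route only needs the quantitative estimate, which is what it records for later use in Proposition \ref{Rem1}. Either way the conclusion and the hypotheses used are identical.
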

\begin{proof}
To prove statement (a) we 
shall show that there is $C_1>0$ such that 
$$\bigg| \int_{\R}\varphi ''(r) (\mu \circ g^{-1})(dr) \bigg| \leq C_1\| \varphi\|_{\infty}, \quad \varphi \in C_b^2(\R).$$
Indeed, this  implies  that   $k$ is weakly differentiable with $k ' \in L^1(\R)$. 

Differentiating  \eqref{dercomp} we get
$$D_{kk}( \varphi \circ g )(x) = \varphi''(g(x))(D_kg(x))^2 +   \varphi'(g(x))D_{kk}g(x), \quad x\in H, $$
and summing over  $k$
$${\rm Tr}(QD^2(g\circ \varphi)) = \varphi''(g(x)) |Q^{1/2}Dg(x) |^2 +  \varphi'(g(x)) {\rm Tr}(QD^2g(x))$$
so that 
$$\begin{array}{lll}
 \varphi'' \circ g & = &\ds{ \frac{ {\rm Tr}(QD^2( \varphi \circ g)) }{ |Q^{1/2}Dg |^2} - (\varphi'\circ g) \frac{ {\rm Tr}(QD^2g) }{ |Q^{1/2}Dg  |^2} }
 \\
 \\
& =  & \ds{ \frac{ 2L_0( \varphi \circ g) +\langle x, D( \varphi \circ g)\rangle }{ |Q^{1/2}Dg |^2}  - (\varphi'\circ g)  \frac{ 2L_0g + \langle x, Dg \rangle}{ |Q^{1/2}Dg |^2} }
\\
\\
& = & \ds{ \frac{ 2L_0( \varphi \circ g) }{ |Q^{1/2}Dg |^2}  - (\varphi'\circ g)  \frac{ 2L_0g }{ |Q^{1/2}Dg |^2} .}
\end{array}$$
Using again  \eqref{derquoz} we get
$$\begin{array}{l}
\ds {\int_H  ( \varphi'' \circ g) d\mu }  =
\\
\\
=  \ds{  \int_H \bigg( -\langle Q^{1/2}D( \varphi \circ g) , Q^{1/2}D( |Q^{1/2}Dg |^{-2})\rangle -2 (\varphi'\circ g)  \frac{L_0g}{ |Q^{1/2}Dg |^2} \bigg) d\mu }
\\
\\
=  \ds{   \int_H \varphi'\circ g \bigg( \langle Q^{1/2}Dg, 2 \frac{Q^{1/2}D^2g \,Q^{1/2}\cdot  Q^{1/2}Dg}{ |Q^{1/2}Dg |^4}\rangle - 2  \frac{L_0g}{ |Q^{1/2}Dg |^2} \bigg) d\mu }
 \\
 \\
=  \ds{  -2 \int_H (\varphi'\circ g ) \psi\,d\mu} ,
\end{array}$$
where  $\psi$ is defined in  \eqref{psi}.  Then we may use Proposition \ref{Pr:ek}, with 
$\rho = \psi$. By assumption,   $\psi\in W^{1,2}_{0}(H, \mu)\subset W^{1,1}_{0}(H, \mu)$, moreover $\psi^2 \in L^1(H, \mu)$ and  $ \frac{|Q^{1/2}D\psi|}{|Q^{1/2}Dg|}\in L^1(H, \mu)$ since  $|Q^{1/2}D\psi|\in L^2(H,  \mu)$,  $|Q^{1/2}Dg|^{-1}\in L^2(H. \mu)$. We get 
 $|  \int_H (\varphi'\circ g ) \psi\,d\mu| \leq C \|\psi\|_{W^{1,2}_{0}(H, \mu)} \|\varphi\|_{\infty}$, and statement (a) follows.

\vspace{3mm}

 Concerning statement (b), the proof is similar, replacing 
  $\mu$ by $\rho \mu$. For every  $\varphi \in C_b^2(\R)$ we have
$$\begin{array}{l}
\ds {\int_H  ( \varphi'' \circ g) \rho d\mu  =  \int_H  \bigg( \frac{ 2\rho L_0( \varphi \circ g) }{ |Q^{1/2}Dg |^2}  - 2(\varphi'\circ g)  \frac{ \rho L_0g }{ |Q^{1/2}Dg |^2}\bigg) d\mu }
\\
\\
= \ds{  \int_H \bigg( -\langle Q^{1/2}D( \varphi \circ g) , Q^{1/2}D\bigg( \frac{\rho}{ |Q^{1/2}Dg |^{2} }\bigg)\rangle -2 (\varphi'\circ g)  \frac{\rho L_0g}{ |Q^{1/2}Dg |^2} \bigg) d\mu }
 \end{array}$$
$$\begin{array}{l}
=   \ds{   \int_H \varphi'\circ g   \bigg( \langle Q^{1/2}Dg, 2 \frac{Q^{1/2}D^2g \,Q^{1/2}\cdot  Q^{1/2}Dg}{ |Q^{1/2}Dg |^4}\rangle 
- 2  \frac{L_0g}{ |Q^{1/2}Dg |^2} \bigg) \rho \,d\mu }
\\
\\
 - \ds{   \int_H \varphi'\circ g  \frac{\langle Q^{1/2}Dg, Q^{1/2}D\rho\rangle}{ |Q^{1/2}Dg |^2}\,  d\mu }
 \\
 \\
=  - \ds{   \int_H  \varphi'\circ g \bigg( 2 \psi \,\rho + \frac{\langle Q^{1/2}Dg, Q^{1/2}D\rho\rangle}{ |Q^{1/2}Dg |^2} \bigg)\,d\mu =  -  \int_H  \varphi'\circ g\,\rho_1 \,d\mu ,}
\end{array}$$
where the function   $\rho_1 $ satisfies the assumptions of Proposition  \ref{Pr:ek}(b).  We obtain 
$| \int_H ( \varphi'\circ g )\rho_1d\mu|\leq C\|\varphi \|_{\infty}$ and the statement follows.
\end{proof}

One can play with $\rho$ and $g$ in order that the assumptions 
of Proposition  \ref{Pr:contk}(b) are satisfied. In the next proposition we give sufficient conditions that are useful for the sequel.

\begin{prop}
\label{Rem1}
The assumptions of Proposition  \ref{Pr:contk}(b) are satisfied by every   $\rho \in W^{2,2}_{0}(H, \mu)$ provided $g\in D( L_0)$ is  such that 
\begin{equation}
\label{cosaserve}
\left\{ \begin{array}{l}|Q^{1/2}Dg|^{-1}\in L^4(H, \mu), \quad \psi\in W^{1,4}_{0 }(H, \mu) , 
\\
\\ \frac{ \|Q^{1/2}D^2g\,Q^{1/2}\|_{{\mathcal L}(H)}}{|Q^{1/2}Dg |^2}\in L^{2}(H, \mu), \;  \frac{ \|Q^{1/2}D^2g\,Q^{1/2}\|_{{\mathcal L}(H)}}{|Q^{1/2}Dg |^3}\in L^2(H, \mu). 
\end{array}\right.
\end{equation}
In this case there exists  $C_2>0$, depending only on $g$, such that  
$$\bigg| \int_{\R}\varphi''(r) (\rho \mu \circ g^{-1})(dr) \bigg| \leq C_2\|\rho\|_{W^{2,2}_{0}(H, \mu)}
\| \varphi\|_{\infty}, \quad \varphi \in C_b^2(\R).$$
Consequently, if  $\rho_n \to \rho$ in $W^{2,2}_{0}(H, \mu)$ then  $k_{\rho_n}\to k_{\rho}$ in $W^{1,1}(\R)$, hence  $k_{\rho_n}\to k_{\rho}$ in $L^{\infty}(\R)$. 
\end{prop}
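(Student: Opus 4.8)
The plan is to reduce the statement of Proposition \ref{Rem1} to Proposition \ref{Pr:contk}(b) applied with $\rho_1 = 2\psi\rho + \langle Q^{1/2}Dg, Q^{1/2}D\rho\rangle/|Q^{1/2}Dg|^2$, so the bulk of the work is verifying that the hypotheses of Proposition \ref{Pr:contk}(b) indeed follow from \eqref{cosaserve} when $\rho\in W^{2,2}_0(H,\mu)$. First I would record that $\rho\in W^{2,2}_0(H,\mu)\subset W^{1,1}_0(H,\mu)$ and that \eqref{cosaserve} gives $\psi\in W^{1,4}_0(H,\mu)$ and $|Q^{1/2}Dg|^{-1}\in L^4(H,\mu)$, hence by H\"older $\psi\rho\in L^2\cdot L^2\subset L^1$ and $|Q^{1/2}D\rho|/|Q^{1/2}Dg|\in L^2\cdot L^4\subset L^{4/3}\subset L^1$ (using $\mu$ finite), so that \eqref{rho} holds and $\rho\mu\circ g^{-1}$ has a density by Proposition \ref{Pr:ek}(b).

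Next I would check the three conditions on $\rho_1$. That $\rho_1\in W^{1,1}_0(H,\mu)$ requires differentiating $\rho_1$ once: the term $\psi\rho$ contributes $\rho\,Q^{(1-0)/2}D\psi$ and $\psi\,Q^{1/2}D\rho$, controlled in $L^1$ by $L^2\cdot L^2$ bounds coming from $\psi\in W^{1,4}_0$, $\rho\in W^{2,2}_0$; the term $\langle Q^{1/2}Dg,Q^{1/2}D\rho\rangle/|Q^{1/2}Dg|^2$ is the delicate one, since differentiating it produces $\langle Q^{1/2}Dg,Q^{1/2}D^2\rho\,Q^{1/2}\cdot\rangle/|Q^{1/2}Dg|^2$ (bounded in $L^1$ via $\|Q^{1/2}D^2\rho\,Q^{1/2}\|_{\mathcal L(H)}\in L^2$ by the $W^{2,2}_0$ membership, since $|Q^{1/2}Dg|^{-1}\in L^4$ means $|Q^{1/2}Dg|^{-1}$ controls the quotient in $L^2$ after Cauchy--Schwarz), plus a term involving $Q^{1/2}D^2g$ and $Q^{1/2}D\rho$ weighted by $|Q^{1/2}Dg|^{-3}$ — this is exactly why the hypothesis $\|Q^{1/2}D^2g\,Q^{1/2}\|_{\mathcal L(H)}/|Q^{1/2}Dg|^3\in L^2(H,\mu)$ is imposed, and pairing it with $|Q^{1/2}D\rho|\in L^2$ gives $L^1$. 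The remaining two conditions $\psi\rho_1\in L^1$ and $|Q^{1/2}D\rho_1|/|Q^{1/2}Dg|\in L^1$ follow from similar H\"older bookkeeping, using $\psi\in W^{1,4}_0$ (so $\psi\in L^4$), $|Q^{1/2}Dg|^{-1}\in L^4$, and $\rho\in W^{2,2}_0$; one writes $\rho_1$ and $Q^{1/2}D\rho_1$ as sums of products and checks that the integrability exponents always add up to $\le 1$ after accounting for the finiteness of $\mu$.

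With the hypotheses of Proposition \ref{Pr:contk}(b) verified, that proposition yields $k_\rho\in W^{1,1}(\R)$; moreover tracing through the chain of estimates above, every $L^1$ bound is in fact a product of an $L^p$-norm of a quantity built from $g$ alone (finite and fixed by \eqref{cosaserve}) times an $L^q$-norm of $\rho$, $Q^{1/2}D\rho$, or $Q^{1/2}D^2\rho\,Q^{1/2}$, i.e. times a piece of $\|\rho\|_{W^{2,2}_0(H,\mu)}$. Collecting constants gives the claimed inequality $|\int_\R \varphi''(r)(\rho\mu\circ g^{-1})(dr)|\le C_2\|\rho\|_{W^{2,2}_0(H,\mu)}\|\varphi\|_\infty$ with $C_2$ depending only on $g$; since $\int_\R\varphi''\,(\rho\mu\circ g^{-1})(dr) = \int_\R\varphi''(r)k_\rho(r)\,dr = -\int_\R\varphi'(r)k_\rho'(r)\,dr$, this controls $\|k_\rho'\|_{L^1(\R)}$, and together with the $W^{1,1}$ bound on $k_\rho$ itself one gets $\|k_\rho\|_{W^{1,1}(\R)}\le C\|\rho\|_{W^{2,2}_0(H,\mu)}$ (after also bounding $\|k_\rho\|_{L^1}=|\rho\mu(H)|\le\|\rho\|_{L^1}$). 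By linearity of $\rho\mapsto k_\rho$, applying this to $\rho_n-\rho$ gives $k_{\rho_n}\to k_\rho$ in $W^{1,1}(\R)$, and since $W^{1,1}(\R)\hookrightarrow L^\infty(\R)$ continuously (a one-dimensional Sobolev embedding), also $k_{\rho_n}\to k_\rho$ in $L^\infty(\R)$.

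The main obstacle I anticipate is the term $\langle Q^{1/2}Dg,Q^{1/2}D\rho\rangle/|Q^{1/2}Dg|^2$ inside $\rho_1$: verifying $\rho_1\in W^{1,1}_0(H,\mu)$ forces one to differentiate it, and the worst piece, with a $|Q^{1/2}Dg|^{-3}$ weight against $Q^{1/2}D^2g$ and $Q^{1/2}D\rho$, is precisely what the last hypothesis in \eqref{cosaserve} is tailored to handle; getting the H\"older exponents to close here (and likewise ensuring the estimate is linear in $\|\rho\|_{W^{2,2}_0}$ rather than just finite) is the one place where the bookkeeping is not entirely automatic and where the precise form of \eqref{cosaserve} is used in an essential way.
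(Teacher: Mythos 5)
Your proposal is correct and follows essentially the same route as the paper: verify the hypotheses of Proposition \ref{Pr:contk}(b) by writing out $Q^{1/2}D\rho_1$ term by term and closing each estimate with H\"older using \eqref{cosaserve}, then track that every bound is linear in $\|\rho\|_{W^{2,2}_{0}(H,\mu)}$ and conclude via linearity of $\rho\mapsto k_\rho$ and the embedding $W^{1,1}(\R)\hookrightarrow L^\infty(\R)$. One small bookkeeping point: the weight $\|Q^{1/2}D^2g\,Q^{1/2}\|_{{\mathcal L}(H)}/|Q^{1/2}Dg|^{3}$ is actually not needed for $\rho_1\in W^{1,1}_{0}(H,\mu)$ (there the $|Q^{1/2}Dg|^{-2}$ hypothesis suffices after Cauchy--Schwarz); it enters only when checking $|Q^{1/2}D\rho_1|/|Q^{1/2}Dg|\in L^1(H,\mu)$, where each term picks up one extra power of $|Q^{1/2}Dg|^{-1}$.
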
 
\begin{proof} Since  $\psi\in L^2$ and  $|Q^{1/2}Dg |^{-1}\in L^4$, then $\rho_1\in L^1$. Computing  $Q^{1/2}D\rho_1$ we obtain
$$
\begin{array}{l}
Q^{1/2}D\rho_1=  
\\
\\
= \rho Q^{1/2}D\psi  + \psi Q^{1/2}D\rho - \displaystyle{ \frac{Q^{1/2}D^2g\,Q^{1/2}\cdot Q^{1/2}D\rho + Q^{1/2}D^2\rho Q^{1/2} \cdot Q^{1/2}Dg}{|Q^{1/2}Dg|^2} }
\\
\\
+2 \langle Q^{1/2}Dg, Q^{1/2}D\rho \rangle \displaystyle{ \frac{Q^{1/2}D^2g\,Q^{1/2}\cdot Q^{1/2}Dg}{|Q^{1/2}Dg|^4}}. 
\end{array}$$
Estimating each addendum we get
\begin{itemize}
\item 
$\rho |Q^{1/2}D\psi| \in L^1$, since $ |Q^{1/2}D\psi| \in L^2$; 
\item
$\psi | |Q^{1/2}D\rho| \in L^1$, since $\psi\in  L^2$; 
\item 
$\displaystyle{\frac{\|Q^{1/2}D^2g\,Q^{1/2}\|_{{\mathcal L}(H)}| Q^{1/2}D\rho | }{|Q^{1/2}Dg|^2}  }\in L^1$, since $ \displaystyle{\frac{\|Q^{1/2}D^2g\,Q^{1/2}\|_{{\mathcal L}(H)} }{|Q^{1/2}Dg|^2} }\in L^2$; 
\item
$\displaystyle{ \frac{\|Q^{1/2}D^2\rho Q^{1/2}\|_{{\mathcal L}(H)}}{|Q^{1/2}Dg|}  }\in L^1$, since  $\displaystyle{ \frac{1 }{|Q^{1/2}Dg|} }\in L^2$; 
\item 
$|Q^{1/2}D\rho|  \displaystyle{\frac{\|Q^{1/2}D^2g\,Q^{1/2}\|_{{\mathcal L}(H)} }{|Q^{1/2}Dg|^2} }\in L^1$, as above.
\end{itemize}
Therefore  $\rho_1\in L^1$, and $\|\rho_1\|_{L^1(H, \mu)}  \leq  c\|\rho\|_{W^{2,2}_{0}(H, \mu)}$. 

The assumptions $\psi \in L^4$,  $ \frac{1}{|Q^{1/2}Dg |}\in L^4$ imply that $\psi \rho_1\in L^1$. 

To check that $ \frac{|Q^{1/2}D\rho_1|}{|Q^{1/2}Dg|} \in L^1$ we  redo the estimates above, dividing each term by  $|Q^{1/2}Dg|$.  We get
\begin{itemize}
\item 
$\rho \displaystyle{\frac{ |Q^{1/2}D\psi| }{|Q^{1/2}Dg|}}
\in L^1$, since  $ |Q^{1/2}D\psi| \in L^4$ and  $\displaystyle{ \frac{1 }{|Q^{1/2}Dg|}} \in L^4$; 
\item
$\psi \displaystyle{ \frac{ | |Q^{1/2}D\rho| }{|Q^{1/2}Dg|}}
\in L^1$, since  $\psi\in  L^4$ and  $\displaystyle{ \frac{1 }{|Q^{1/2}Dg|}} \in L^4$; 
\item 
$\displaystyle{\frac{\|Q^{1/2}D^2g\,Q^{1/2}\|_{{\mathcal L}(H)}| Q^{1/2}D\rho | }{|Q^{1/2}Dg|^3}} \in L^1$, since  $\displaystyle{ \frac{\|Q^{1/2}D^2g\,Q^{1/2}\|_{{\mathcal L}(H)} }{|Q^{1/2}Dg|^3}} \in L^2$; 
\item
$\displaystyle{ \frac{\|Q^{1/2}D^2\rho Q^{1/2}_{{\mathcal L}(H)}}{|Q^{1/2}Dg|^2} } \in L^1$, since $\displaystyle{ \frac{1 }{|Q^{1/2}Dg|}} \in L^4$; 
\item 
$|Q^{1/2}D\rho| \displaystyle{\frac{\|Q^{1/2}D^2g\,Q^{1/2}\|_{{\mathcal L}(H)} }{|Q^{1/2}Dg|^3}} \in L^1$, as above.
\end{itemize}
Therefore, the norms    $\|\psi \rho_1\|_{L^1}$ and  $\|  \frac{|Q^{1/2}D\rho_1|}{|Q^{1/2}Dg|}\|_{L^1}$ are bounded by  $c\|\rho\|_{W^{2,2}_{0}(H, \mu)}$. 
Applying Proposition   \ref{Pr:contk}(b)  the statement follows.     \end{proof}

\begin{ex}
\label{examples}
Let us consider some simple examples. 
\begin{itemize}
\item[(a)] $g(x) = \langle b, x\rangle$,  with $|b| =1$, 
\item[(b)] $g(x) =\langle Tx, x\rangle$, with $T\in {\mathcal L}(H)$, $Te_k = t_ke_k$ for each $k\in \N$ and $t_k\neq 0$ for infinitely many $k$, 
\item[(c)] $g(x) = \sum_{k=1}^{13}x_k^2$.
\end{itemize}

In all these cases $g$ satisfies the conditions of Proposition \ref{Rem1}.  
\end{ex}
\begin{proof} In case (a) we have $Dg = b$, $D^2g =0$ so that  $L_0 g = - \langle b, x\rangle /2= -g/2$ and 
$$\psi = -\frac{   \langle b, x\rangle }{2|Q^{1/2}b|^2} $$
which belongs to  $W^{1,4}_{0} (H, \mu)$. The other conditions of Proposition \ref{Rem1} are obviously satisfied. 

\vspace{1mm}

In case (b) we have  $Dg(x)  =2Tx$, $D^2g(x) = 2T$ so that $L_0 g =  {\rm Tr}[QT] - g$ and 
\begin{equation}
\label{psies}
\psi(x) =  \frac {{\rm Tr}[QT] - \langle Tx, x\rangle}{2|Q^{1/2}Tx|^2 } -  \frac {\langle Q^2T^3x , x\rangle }{|Q^{1/2}Tx|^2 } . 
\end{equation}
Since $t_k\neq 0$ for infinitely many $k$, then   $x\mapsto |Q^{1/2}Dg(x)|^{-1}$  belongs to all spaces  $L^p(H, \mu)$.  Indeed, 
$ |Q^{1/2}Dg(x)|^2 \geq 4 \sum_{k=1}^{N} \lambda_i t_k^2x_k^2 $ where $N$ is so large that at least $[p]+1$ addenda do not vanish.
The other assumptions 
of Remark \ref{Rem1} are easily seen to be satisfied. 

\vspace{1mm}

In case (c) we   still have $g(x) = \langle Tx, x\rangle$ with $T\in   {\mathcal L}(H)$, $Tx = \sum_{k=1}^{13} x_k e_k $, so that    $t_k\neq 0$ only for $k=1, \ldots, 13$. However, $ |Q^{1/2}Dg(x)|^{-1} \leq c_0( \sum_{k=1}^{13} x_k^2)^{-1/2}$ with $c_0= 1/\min\{ \lambda_{k}^{1/2}:\; k=1, \ldots, 13\}$ so that $ |Q^{1/2}Dg |^{-1} \in L^p(H, \mu)$ for every $p<13$. The function $\psi$ is still given by \eqref{psies} on   span$\{e_1, \ldots, e_{13}\}$ and it belongs to $ L^p(H, \mu)$ for every $p<13/3$, in particular it belongs to $ L^4(H, \mu)$, as well as $|Q^{1/2}D\psi|^{-1}$. The other conditions 
of Proposition \ref{Rem1} are easily seen to be satisfied.
\end{proof}

In cases (a) and (b) with $T=I$ it is possible to give a representation formula for $k$ that shows   that $k\in C^{\infty}$, see \cite{Hertle}.  In case (c) 
we have $ |Q^{1/2}Dg(x)|^{-1} \geq c_1 ( \sum_{k=1}^{13} x_k^2)^{-1/2}$ with $c_1= 1/\max\{\lambda_{k}^{1/2}:\; k=1, \ldots, 13\}$ so that $ |Q^{1/2}Dg |^{-1} \notin L^p(H, \mu)$ for   $p\geq 13$.

\vspace{3mm}

The construction of the surface measures  goes as follows. First, one constructs surface measures depending explicitly on $g$ by an approximation procedure. 

One fixes once and for all a convex  compact  set $K$ which is symmetric with respect to the origin and has positive measure, say $\mu(K)>1/2$. 
Such a $K$ does exist. Indeed, it is well known that 
 there are compact sets $\widetilde{K}$ with positive (arbitrarily close to $1$) measure (a simple proof is e.g. in \cite[Thm. 6.2]{DP}). The absolute convex hull $K$ of $\widetilde{K}$ is compact, symmetric with respect to the origin and contains $\widetilde{K}$, so that $\mu(K)\geq \mu(\widetilde{K})$.

Then we need a regular cutoff function. The proof of its existence follows closely \cite[Prop. 5.4.12]{Bo}, with a few simplifications 
due to our Hilbert space setting.

\begin{lem}
Let  $K\subset H$ be compact, convex, symmetric with respect to the origin, with $\mu(K) >1/2$. Then there exists a function  $\theta\in W^{\infty}(H, \mu)$ such that  $\theta \equiv 1$  on  $K$, $\theta=0 $ a.e. outside $2K$ and  $0\leq \theta (x) \leq 1$ for   all $x\in H$. 
\end{lem}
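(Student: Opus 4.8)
The plan is to build $\theta$ as a composition $\theta=\chi\circ\widetilde q$, where $q=q_K$ is the Minkowski gauge of $K$, $\widetilde q\in W^\infty(H,\mu)$ is a regularisation of $q$, and $\chi\colon\R\to[0,1]$ is a plain one--variable cut--off. Recall $q_K(x):=\inf\{t>0:\ x\in tK\}$, with $\inf\emptyset=+\infty$; since $K$ is closed, convex and symmetric, $q_K$ is a lower semicontinuous $[0,+\infty]$--valued seminorm, positively homogeneous of degree one, with $\{q_K\le r\}=rK$ for every $r>0$. In particular $\{q_K\le1\}=K$ and $\{q_K\le2\}=2K$, so it suffices to produce $\widetilde q\in W^\infty(H,\mu)$ with $q_K\le\widetilde q\le q_K+c$ for a small constant $c$, and then compose with a suitable $\chi$.

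First I would record the integrability of $q_K$. Since $\mu(q_K\le1)=\mu(K)>1/2$, Fernique's theorem applied to the measurable seminorm $q_K$ (its hypothesis being exactly $\mu(K)>1/2$) gives $\int_He^{\eps\,q_K^2}\,d\mu<\infty$ for some $\eps>0$; hence $q_K<\infty$ $\mu$--a.e.\ and $q_K\in\bigcap_{p\ge1}L^p(H,\mu)$, so $\bar q:=\int_Hq_K\,d\mu<\infty$. Next I would smooth $q_K$ with the Ornstein--Uhlenbeck semigroup $T_0(t)$ of \eqref{OUalpha} (the case $\alpha=0$), which for $t>0$ sends $\bigcap_{p>1}L^p(H,\mu)$ into $W^\infty(H,\mu)$ by its standard hypercontractive smoothing (Meyer's inequalities together with boundedness of the multipliers $(I-L_0)^{k/2}T_0(t)$ on every $L^p$). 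Using the Mehler representation $T_0(t)q_K(x)=\int_Hq_K\big(e^{-t/2}x+\sqrt{1-e^{-t}}\,y\big)\,\mu(dy)$, convexity of $q_K$ and centredness of $\mu$, Jensen's inequality yields $T_0(t)q_K(x)\ge q_K(e^{-t/2}x)=e^{-t/2}q_K(x)$, while subadditivity and homogeneity of $q_K$ give $T_0(t)q_K(x)\le e^{-t/2}q_K(x)+\sqrt{1-e^{-t}}\,\bar q$. Setting $\widetilde q:=e^{t/2}T_0(t)q_K\in W^\infty(H,\mu)$ we therefore obtain $q_K\le\widetilde q$ everywhere on $H$ and $\widetilde q\le q_K+\sqrt{e^{t}-1}\,\bar q$ wherever $q_K<\infty$.

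To conclude I would fix $t>0$ small enough that $\sqrt{e^{t}-1}\,\bar q<1/2$ and choose $\chi\in C^\infty(\R;[0,1])$ with all derivatives bounded, $\chi\equiv1$ on $(-\infty,3/2]$ and $\chi\equiv0$ on $[2,+\infty)$, and set $\theta:=\chi\circ\widetilde q$. On $K$ one has $q_K\le1$, hence $\widetilde q\le1+\sqrt{e^{t}-1}\,\bar q<3/2$ and $\theta\equiv1$ there; outside $2K$ one has $q_K>2$, hence $\widetilde q\ge q_K>2$ and $\theta=0$; and $0\le\theta\le1$ everywhere. Finally $\theta\in W^\infty(H,\mu)$, because $W^\infty(H,\mu)$ is stable under composition on the left with $C^\infty$ functions having bounded derivatives of every order (Fa\`a di Bruno together with H\"older's inequality, $W^\infty$ being an algebra). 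This is the required cut--off.

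The step I expect to be the genuine obstacle --- or at least the one to be stated with care --- is the assertion that $T_0(t)$ maps $L^p(H,\mu)$ into $W^\infty(H,\mu)$ for every $t>0$ and $p>1$: this is the hypercontractive smoothing of the Ornstein--Uhlenbeck semigroup and rests on Meyer's multiplier theorem and Meyer's inequalities, which should be quoted from the literature rather than reproved. A secondary point to watch is that Fernique's theorem is invoked in the version valid for a (possibly $+\infty$--valued) gauge of a convex symmetric set, for which $\mu(K)>1/2$ is precisely what is needed; everything else is elementary convex analysis and the chain rule in Sobolev spaces over $\mu$.
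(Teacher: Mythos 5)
Your proof is correct, but it takes a genuinely different route from the paper's. The paper first \emph{truncates}: it sets $d(x)=\inf\{p_K(x-y):y\in K\}$ on the span $E$ of $K$, forms the bounded Borel function $\varphi=1-h(d)$, and smooths it with $T_0(t)$, so that only the (easier) mapping property $T_0(t)\colon B_b(H)\to W^{\infty}(H,\mu)$ is needed; the price is a quantitative separation argument — choosing $m$ with $\mu(mK)>8/9$ and $t$ with $1-e^{-t/2}<1/8$, $m\sqrt{1-e^{-t}}<1/8$, and splitting the Mehler integral over $mK$ and its complement to get $T_0(t)\varphi\ge 2/3$ on $K$ and $\le 3/5$ on $E\setminus 2K$. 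You instead smooth the \emph{unbounded} gauge $q_K$ directly and exploit its convex structure: Jensen gives the lower bound $T_0(t)q_K(x)\ge e^{-t/2}q_K(x)$, subadditivity plus Fernique gives the upper bound, and the resulting sandwich $q_K\le \widetilde q\le q_K+\sqrt{e^t-1}\,\bar q$ makes the level-set containments immediate, with no numerical bookkeeping; as a bonus you get $\theta=0$ \emph{everywhere} off $2K$ (since $T_0(t)q_K\equiv+\infty$ off $E$), where the paper only gets it a.e. The trade-off is that you need the stronger smoothing statement for unbounded integrands and Fernique's theorem for the gauge of a symmetric convex set of measure $>1/2$; both are standard and citable, so this is a legitimate, arguably cleaner, alternative.

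One phrasing to fix: in your closing caveat you state that ``$T_0(t)$ maps $L^p(H,\mu)$ into $W^{\infty}(H,\mu)$ for every $t>0$ and $p>1$.'' For a fixed $p$ this is false — Nelson's hypercontractivity is sharp, so $T_0(t)(L^p)\not\subset L^q$ once $q>1+(p-1)e^{t}$, hence $T_0(t)(L^p)\not\subset W^{\infty}$. What is true, and what the body of your argument correctly uses, is that $T_0(t)$ maps $\bigcap_{p>1}L^p(H,\mu)$ into $W^{\infty}(H,\mu)$ (write $T_0(t)=T_0(t/2)T_0(t/2)$ and apply Meyer's multiplier theorem to $(I-L_0)^{k/2}T_0(t/2)$ on each $L^q$); this applies to $q_K$ precisely because Fernique puts it in every $L^p$. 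So there is no gap, but the hypothesis should be stated in the intersection form.
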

\begin{proof} 
By the $0-1$ law (e.g., \cite[Thm. 2.5.5]{Bo}), the vector space  $E$ spanned by  $K$ has measure  $1$. Consequently,  $\lim_{m\to \infty} \mu(mK) =1$. Fix  $m\in \N$ such that 
$$\mu(mK) >\frac{8}{9}.$$
Let us consider the Minkowski functional defined by   $p_K(x) : = \inf \{ \alpha >0:\;x\in \alpha K\}$ for $x\in E$, and the function
$d(x) : = \inf \{p_K(x-y):\;y\in  K\}$ if  $x\in E$, $d(x)=1$ if  $x\notin E$. We modify it setting 
$$\varphi (x) = 1-  h(d(x)) , \quad x\in H,$$
where $h(t) = t$ for  $ t\leq 1$ and  $h(t)=1$ for  $t\geq 1$. The function  $\varphi $ is Borel measurable, has values between  $0$ and  $1$,  $\varphi \equiv 1$ on  $K$ and  $\varphi \equiv 0$ outside  $E$ and outside $2K $. We regularize it applying  $T_{0}(t) $, 
where  $t>0$ is chosen such that 
$$ 1-e^{-t/2} <\frac{1}{8}, \quad m\sqrt{1-e^{-t}}  <\frac{1}{8}. $$
Since  $\varphi \in {\mathcal B}_b(H)$, then  $T_0(t)\varphi \in W^{\infty}(H, \mu)$ (e.g., \cite[Prop. 5.4.8]{Bo}). 
 
Moreover, 
\begin{equation}
\label{eq:1}
  T_0(t)\varphi(x) \geq \frac{2}{3} \;\forall x\in K, \quad T_0(t)\varphi(x) \leq \frac{3}{5} \;\forall x\in E\setminus 2K.
\end{equation}
Indeed, let $x\in K$. Then $ e^{-t/2}x\in K$, and for each $y\in mK$ we have  $\sqrt{1-e^{-t}}y\in K/8$. The sum  $ e^{-t/2}x+\sqrt{1-e^{-t}}y $ belongs to  $9K/8$, so that  $d(e^{-t/2}x+\sqrt{1-e^{-t}}y)\leq 1/8$ and therefore
$\varphi(e^{-t /2}  x+ \sqrt{1-e^{-t}}y) \geq 7/8$. Since  $\mu(H\setminus mK )\leq 1/9$, we get  $T_0(t)\varphi(x)\geq 7/8 - 1/9 > 2/3$.  
Let now $x\in E\setminus 2K$.  Since  $e^{-t/2} >7/8$, $e^{-t/2}x \notin 7K/4 $ and consequently  for every  $y\in mK$ the sum  $ e^{-t/2}x+\sqrt{1-e^{-t}}y $ does not belong to  $7K/4 - K/8 = 13K/8$. Therefore,  $d( e^{-t/2}x+\sqrt{1-e^{-t}}y) \geq 5/8$, so that $\varphi(e^{-t /2}  x+ \sqrt{1-e^{-t}}y) \leq 3/8$. Again since  $\mu(H\setminus mK )\leq 1/9$, we get   $T_0(t)\varphi(x)\leq 3/8 +1/9 = 35/72 <3/5$, and \eqref{eq:1} is proved. 

Now  fix a function  $\eta \in C^{\infty}(\R)$ such that  $0\leq \eta \leq 1$, $\eta (t) = 0$ for  $t\leq 3/5$, $\eta (t) =1$ for  $t\geq 2/3$, and set 
$$\theta(x) = \eta ( T_0(t)\varphi (x)), \quad x\in H.$$
The function $\theta $ is what we were looking for. It has values between  $0$ and  $1$, it belongs to  $W^{\infty}(H, \mu)$, $\theta(x) =1$ for  $x\in K$  and $\theta(x) =0$ for  $x\in E\setminus 2K$. Since  $\mu(E)=1$, then $\theta(x) =0$ for almost all  $x\in H\setminus 2K$. The statement follows.  
\end{proof}

Now we fix  
 $\varphi_0 \in C^{\infty}_{c}(\R)$ with  $0\leq \varphi_0\leq 1$, $\int_{\R} \varphi_0(t)dt =1$ and  $\varphi_0\equiv 1$ in a neighborhood of  $0$, $\varphi_0\equiv 0$ outside  $(-1, 1)$. Then for each $r\in \R$ the sequence  $\{ \varphi_0(j(t-r))dt/j\}$ converges weakly to the Dirac measure $\delta_r$. 

For each  $r$ in the interior part of $g(H)$ we set
$$\theta_n(x) = \theta\bigg(\frac{x}{n}\bigg) ,\quad x\in H; \quad \quad \varphi_j(t) =  \frac{\varphi_0(j(t-r))}{j}, \quad j\in \N, \;t\in \R.$$
The following proposition is proved in \cite{Bo}. Since in the Hilbert space case  there are not simplifications with respect to the general setting of \cite{Bo}, we refer to \cite[Lemma 6.10.1, Thm. 6.10.2]{Bo} for the proof.  

\begin{prop}
\label{costruzione}
\begin{itemize}
\item[(a)] For each $n\in \N$, the sequence of measures
$$\nu_{n, j}(dx) = \theta_n(x)\frac{\varphi_j(g(x))}{k(g(x))}\mu(dx)$$
converges weakly to a measure $\nu_n$ concentrated on  $\Sigma_r := g^{-1}(r)$. Moreover, for each continuous  $f\in W^{2,2}_{0}(H)$ we have 
\begin{equation}
\label{e0}
\int_H f\, d\nu_n = \int_{ \Sigma_r} f\, d\nu_n = \frac{k_{f\theta_n}(r)}{k(r)}. 
\end{equation}
\item[(b)]  In its turn, the sequence  $\nu_n$ converges weakly to a probability measure  $\sigma^{(g)}_r $ concentrated on $\Sigma_r $, such that for each  continuous  $f\in W^{2,2}_{0}(H)$ we have 
\begin{equation}
\label{e1}
\int_H f\, d\sigma^{(g)}_r  = \int_{ \Sigma_r} f\, d\sigma^{(g)}_r = \frac{k_{f }(r)}{k(r)}. 
\end{equation}
\end{itemize}
\end{prop}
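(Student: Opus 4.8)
The plan is to prove (a) for a fixed $n$ and then obtain (b) by letting $n\to\infty$, in both cases combining the change-of-variables identity
$$\int_H \Phi(g(x))\,\rho(x)\,\mu(dx)=\int_{\R}\Phi(t)\,k_{\rho}(t)\,dt,$$
valid whenever $\rho\mu\circ g^{-1}$ has density $k_{\rho}$ (this is the content of Propositions \ref{Pr:ek}--\ref{Rem1}), with the fact that $\varphi_j(t)\,dt\rightharpoonup\delta_r$; the overall scheme follows \cite[Lemma~6.10.1, Thm.~6.10.2]{Bo}. For part (a), fix a continuous bounded $f\in W^{2,2}_{0}(H)$. The product $f\theta_n$ is again continuous, bounded and belongs to $W^{2,2}_{0}(H,\mu)$: one uses $\theta\in W^{\infty}(H,\mu)$ and the fact that, being of the form $\eta(T_0(t)\varphi)$ with $\varphi$ bounded Borel, the functions $\theta$, $Q^{1/2}D\theta$ and $QD^2\theta$ are bounded by the smoothing properties of $T_0(t)$. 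Hence $f\theta_n$ satisfies the hypotheses of Proposition \ref{Rem1}, so $k_{f\theta_n}\in W^{1,1}(\R)\subset C(\R)$, and since $k\in C(\R)$ with $k(r)>0$ ($r$ being interior to the range of $g$) we get
$$\int_H f\,d\nu_{n,j}=\int_{\R}\frac{\varphi_j(t)}{k(t)}\,k_{f\theta_n}(t)\,dt\ \xrightarrow[j\to\infty]{}\ \frac{k_{f\theta_n}(r)}{k(r)}.$$
Choosing $f\equiv 1$ shows $\sup_j\nu_{n,j}(H)<\infty$.

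The weak convergence then follows from compactness. Since $\theta_n$ is supported in the compact set $2K$, every $\nu_{n,j}$ is carried by $2K$, so $\{\nu_{n,j}\}_j$ is tight and, by Prokhorov, has weakly convergent subsequences; any weak limit $\nu_n$ is carried by $2K$ and satisfies $\int_H f\,d\nu_n=k_{f\theta_n}(r)/k(r)$ for all continuous bounded $f\in W^{2,2}_{0}(H)$. Because the smooth cylindrical functions form a subalgebra of $C(2K)$ containing the constants and separating points, they are dense in $C(2K)$ by Stone--Weierstrass, so this identity determines $\nu_n$ uniquely, the whole sequence converges, and \eqref{e0} holds. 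To see that $\nu_n$ is concentrated on $\Sigma_r$, note that if $x_0\notin\Sigma_r$ then $|g-r|>\delta$ on a neighbourhood $V\ni x_0$ for some $\delta>0$, hence $\nu_{n,j}(V)=0$ once $\operatorname{supp}\varphi_j\subset(r-\delta,r+\delta)$; by Portmanteau $\nu_n(V)\le\liminf_j\nu_{n,j}(V)=0$, and covering $\Sigma_r^c$ by such neighbourhoods gives $\nu_n(\Sigma_r^c)=0$.

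For part (b) I would first show $f\theta_n\to f$ in $W^{2,2}_{0}(H,\mu)$ as $n\to\infty$. The $L^2$ part follows by dominated convergence, since $\theta(x/n)\to1$ for $\mu$-a.e.\ $x$ (the subspace spanned by $\bigcup_n nK$ has full measure by the $0$--$1$ law), while the terms with the derivatives of $\theta_n$ carry factors $n^{-1}$, $n^{-2}$ coming from $D\theta_n(x)=n^{-1}(D\theta)(x/n)$, $D^2\theta_n(x)=n^{-2}(D^2\theta)(x/n)$, together with the boundedness of $Q^{1/2}D\theta$ and $QD^2\theta$. Then Proposition \ref{Rem1} yields $k_{f\theta_n}\to k_f$ in $W^{1,1}(\R)$, hence pointwise, so $\int_H f\,d\nu_n=k_{f\theta_n}(r)/k(r)\to k_f(r)/k(r)$; in particular $\nu_n(H)\to k(r)/k(r)=1$.

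The step I expect to be the real obstacle is the tightness of $\{\nu_n\}_n$, since, unlike in (a), these measures are not carried by a common compact set. I would obtain it from the domination $\one_{(mK)^c}\le 1-\theta(\,\cdot\,/\lfloor m/2\rfloor)=:\rho_m$ (using convexity and symmetry of $K$), where $\rho_m\in W^{\infty}(H,\mu)$ and $\|\rho_m\|_{W^{2,2}_{0}(H,\mu)}\to0$ as $m\to\infty$, again because the derivatives of $\rho_m$ carry negative powers of $m$ and $\mu((mK)^c)\to0$. By Proposition \ref{Rem1}, $\|k_{\rho_m}\|_{W^{1,1}(\R)}\le C_2\|\rho_m\|_{W^{2,2}_{0}(H,\mu)}\to0$, hence $\|k_{\rho_m}\|_{L^{\infty}(\R)}\to0$; and since $(mK)^c$ is open,
$$\nu_n\big((mK)^c\big)\le\liminf_{j\to\infty}\nu_{n,j}\big((mK)^c\big)\le\liminf_{j\to\infty}\int_H\rho_m\,\frac{\varphi_j(g)}{k(g)}\,d\mu=\frac{k_{\rho_m}(r)}{k(r)},$$
a bound independent of $n$ tending to $0$ as $m\to\infty$. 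Thus $\{\nu_n\}$ is tight; Prokhorov produces weak limits, which all satisfy $\int_H f\,d\sigma=k_f(r)/k(r)$ on the dense class of smooth cylindrical functions, so the limit $\sigma^{(g)}_r$ is unique, the full sequence converges, and \eqref{e1} holds. Finally $\sigma^{(g)}_r(H)=k_1(r)/k(r)=1$, and since $\Sigma_r^c$ is open with $\nu_n(\Sigma_r^c)=0$ for every $n$ by part (a), Portmanteau gives $\sigma^{(g)}_r(\Sigma_r^c)\le\liminf_n\nu_n(\Sigma_r^c)=0$, so $\sigma^{(g)}_r$ is a probability measure concentrated on $\Sigma_r$.
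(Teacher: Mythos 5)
Your proposal is correct, but note that the paper itself offers no proof of this proposition: it defers entirely to \cite[Lemma~6.10.1, Thm.~6.10.2]{Bo}, and your argument is essentially a faithful reconstruction of that proof (pushforward densities $k_{\rho}$ via Proposition \ref{Rem1}, the approximation $\varphi_j\,dt\rightharpoonup\delta_r$, Prokhorov plus a density class to identify the limits, and the dilated cutoffs $1-\theta(\cdot/\ell)$ for tightness in part (b)). Two points you should make explicit if this were written out in full: the positivity and continuity of $k$ in a neighbourhood of $r$ (needed both to make $\varphi_j(g)/k(g)$ a bounded weight and to pass to the limit $k_{f\theta_n}(r)/k(r)$), and the extension of \eqref{e1} from bounded continuous $f\in W^{2,2}_{0}(H)$ --- for which your argument is complete --- to general continuous $f\in W^{2,2}_{0}(H)$, which requires an additional truncation step since, unlike in part (a), $\sigma^{(g)}_r$ is not carried by a compact set; both points are handled in \cite{Bo} and are at the level of detail the paper itself elides.
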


\begin{defn}
\label{MisSup}
For every Borel bounded function  $\varphi :H\mapsto \R$ and for every $r$ in the interior part of $g(H)$ we set 
$$\int_{\Sigma_r} \varphi \,d\sigma_r : = k(r) \int_{\Sigma_r} \varphi|Q^{1/2}Dg| \,d\sigma^{(g)}_r .$$
\end{defn}

\begin{rem}
\label{rem:limite}
It is easy to see that for every    $f:H\mapsto \R$ such that  $f |Q^{1/2}Dg| \in W^{2,2}_{0}(H, \mu)\cap C(H)$ we have
$$\int_{\Sigma_r}f \,d\sigma_r = \lim_{\eps\to 0} \frac{1}{2\eps} \int_{r-\eps \leq g(x)\leq r+\eps} f |Q^{1/2}Dg| \,d\mu .$$
Indeed, applying Proposition \ref{Pr:contk} we get  
$$ \lim_{\eps\to 0} \frac{1}{2\eps} \int_{r-\eps \leq g(x)\leq r+\eps} f |Q^{1/2}Dg| \,d\mu  =  \lim_{\eps\to 0} \frac{1}{2\eps} \int_{r-\eps}^{r+\eps} d(f |Q^{1/2}Dg|\circ \mu)$$
$$ =  \lim_{\eps\to 0} \frac{1}{2\eps} \int_{r-\eps}^{r+\eps}  k_{f |Q^{1/2}Dg|}(t)dt = k_{f |Q^{1/2}Dg|}(r).$$
On the other hand, by Proposition  \ref{costruzione}(b) we have
$$k_{f |Q^{1/2}Dg|}(r) = k(r)\int_{\Sigma_r}  f |Q^{1/2}Dg| \,d\sigma^{(g)}_r $$
and the right hand side is just $\int_{\Sigma_r}f \,d\sigma_r$ by definition. 
 \end{rem}



\begin{thebibliography}{999}
  

 \bibitem{BDPT} V. Barbu, G. Da Prato,  L. Tubaro, \textit{Kolmogorov equation associated to the stochastic reflection problem
on a smooth convex set of a Hilbert space}, Ann. Probab. {\bf 37} (2009), 1427--1458. 

\bibitem{Bo}  V.I. Bogachev, \textit{Gaussian Measures}, American
Mathematical Society, Providence, 1998.

\bibitem{DP} G. Da Prato, \textit{An introduction to Infinite-Dimensional Analysis},  Springer-Verlag, Berlin, 2006. 

\bibitem{DPGZ} G. Da Prato, B. Goldys,  J. Zabczyk,  
\textit{Ornstein-Uhlenbeck semigroups in open sets of Hilbert spaces},
 C. R. Acad. Sci. Paris,  {\bf 325}, 433-438, 1997.



\bibitem{DPZ3} G. Da Prato,  J. Zabczyk,  \textit{Second Order Partial Differential Equations in Hilbert Spaces},   
London  Mathematical Society, Lecture Notes, {\bf 293}, Cambridge University Press, Cambridge, 2002. 


\bibitem{DS}  J. D.~Deuschel, D.~Stroock, \textit{Large Deviations},  Academic Press, San Diego, 1984.

\bibitem{D} E. B. Dynkin,   \textit{Markov Processes},   Vol I,  Springer-Verlag, Berlin, 1965.

\bibitem{Friedman} A. Friedman, \textit{Stochastic differential equations and applications. Vol.1}, Academic Press, New York, 1975.

 \bibitem{Gross} L. Gross, {\textit
Potential Theory in Hilbert spaces} J. Funct. Analysis {\bf 1}, (1965), 123--189.

\bibitem{Hertle}  A. Hertle, \textit{Gaussian surface measures and the Radon transform
on separable Banach spaces,} Measure theory, Oberwolfach 1979 (Proc. Conf.,
Oberwolfach, 1979),   513--531, Lecture Notes in Math. {\bf 794}, Springer-Verlag, Berlin, 1980.

\bibitem{Kato}  T. Kato, \textit{Perturbation theory for linear operators},  Springer-Verlag, Berlin, 1966.

\bibitem{K} P. Kotelenez, \textit{A 
submartingale type inequality with applications to 
stochastic evolution equations,}   Stochastics,  {\bf 8} (1982), 139--51.

\bibitem{LMP} A. Lunardi, G. Metafune,  D. Pallara,  \textit{Dirichlet boundary conditions for elliptic operators with unbounded drift}, Proc. Amer. Math. Soc. {\bf 133} (2005), 2625--2635. 

\bibitem{M} P. Malliavin,   \textit{Stochastic analysis},   Springer-Verlag, Berlin, 1997.

\bibitem{Tal} A. Talarczyk, \textit{Dirichlet problem for parabolic equations on Hilbert spaces},  Studia Math. {\bf 141} (2000), 109--142.

\bibitem{T} L. Tubaro,   \textit{An estimate of Burkholder type for stochastic processes defined by the stochastic integral},  Stochastic Anal. Appl.,  {\bf 2} (1984), 187--192. 

  
\end{thebibliography}
\end{document}